\renewcommand{\@seccntformat}[1]
{{\csname the#1\endcsname}.\hspace{0.3em}}
\renewcommand{\section}{\@startsection
{section}
{1}
{0mm}
{-1.5\baselineskip}
{\baselineskip}
{\bfseries\normalsize}}
\renewcommand{\subsection}{\@startsection
{subsection}
{2}
{0mm}
{-\baselineskip}
{0.5\baselineskip}
{\normalsize\itshape}}
\renewcommand{\subsubsection}{\@startsection
{subsubsection}
{3}
{0mm}
{-.5\baselineskip}
{-2mm}
{\normalsize\itshape}}
\theoremstyle{plain}
\newtheorem*{theorem*}{Theorem}
\newtheorem{theorem}{Theorem}[section]
\newtheorem{lemma}{Lemma}[section]
\newtheorem{corollary}[lemma]{Corollary}
\newtheorem{prop}[lemma]{Proposition}
\newtheorem{claim}[lemma]{Claim}
\newtheorem*{corollary*}{Corollary}
\newtheorem*{claim*}{Claim}
\newtheorem*{BI}{Berger's inequality}
\theoremstyle{definition}
\newtheorem*{defin*}{Definition}
\newtheorem{defin}{Definition}[section]
\theoremstyle{remark}
\newtheorem{remark}{Remark}[section]
\DeclareMathAlphabet{\matheur}{U}{eur}{m}{n}
\DeclareMathAlphabet{\matheus}{U}{eus}{m}{n}
\DeclareMathAlphabet{\matheuf}{U}{euf}{m}{n}
\numberwithin{equation}{section}
\newcommand{\abs}[1]{\left\lvert#1\right\rvert}
\DeclareMathOperator{\Hess}{Hess}
\DeclareMathOperator{\dist}{dist}
\DeclareMathOperator{\sn}{sn}
\DeclareMathOperator{\inj}{inj}
\DeclareMathOperator{\rad}{rad}
\DeclareMathOperator{\conv}{conv}
\DeclareMathOperator{\grad}{grad}
\begin{document}

\author{Gerasim  Kokarev
\\ {\small\it School of Mathematics, The University of Leeds}
\\ {\small\it Leeds, LS2 9JT, United Kingdom}
\\ {\small\it Email: {\tt G.Kokarev@leeds.ac.uk}}
}

\title{The Berger inequality  for Riemannian manifolds with an upper sectional curvature bound}
\date{}
\maketitle

\begin{abstract}

\noindent
We obtain inequalities for all Laplace eigenvalues of Riemannian manifolds with an upper sectional curvature bound, whose rudiment version for the first Laplace eigenvalue was discovered by Berger in 1979. We show that our inequalities continue to hold for conformal metrics, and moreover, extend naturally to minimal submanifolds. In addition, we obtain explicit upper bounds for Laplace eigenvalues of minimal submanifolds in terms of geometric quantities of the ambient space.
\end{abstract}

\medskip
\noindent
{\small
{\bf Mathematics Subject Classification (2010)}:  58J50, 35P15, 49Q05

\noindent
{\bf Keywords}: Laplace eigenvalues, Riemannian manifold, eigenvalue inequalities, minimal submanifolds. }

%
%
%


\section{Statement and discussion of results}
\label{intro}

\subsection{Introduction}

Let $(M,g)$ be a closed Riemannian manifold of dimension $m$, and $\inj(g)$ its injectivity radius. A classical result by Berger~\cite{Be79} in 1979 says that for every $0<r<\inj(g)$ there exists a point $p\in M$ such that the first non-zero Dirichlet eigenvalue of a geodesic ball $B(p,r)$ in $M$ satisfies the inequality
$$
\lambda_0(B(p,r))\leqslant C_1(m)\frac{\mathit{Vol}_g(M)}{r^{m+2}},
$$
where $C_1(m)$ is a positive constant that depends on the dimension $m$ only. He uses this inequality to obtain the following upper bound for the first non-zero Laplace eigenvalue of $M$.
\begin{BI}
Let $(M,g)$ be a closed Riemannian manifold that admits an involutive isometry without fixed points. Then its first non-zero Laplace eigenvalue satisfies the inequality
\begin{equation}
\label{Be:1}
\lambda_1(g)\leqslant C_2(m)\frac{\mathit{Vol}_g(M)}{\inj(g)^{m+2}},
\end{equation}
where $C_2(m)$ is a constant that depends on the dimension $m$ only.
\end{BI}
In~\cite{Be79} Berger asks under what other geometric hypotheses on $M$ inequality~\eqref{Be:1} may hold. First answers are given by B\'erard and Besson~\cite{BB80}, who show that this inequality holds for homogeneous Riemannian manifolds and locally harmonic spaces. In a seminal paper~\cite{Cro80} Croke proves, among other results, a version of inequality~\eqref{Be:1} that uses the convexity radius $\conv(g)$ instead of $\inj(g)$, and holds for {\em arbitrary} closed Riemannian manifolds. More precisely, he shows that
\begin{equation}
\label{Cro:1}
\lambda_1(g)\leqslant C_3(m)\frac{\mathit{Vol}_g(M)^2}{\conv(g)^{2m+2}}.
\end{equation} 
The argument in~\cite{Cro80} uses a slightly different (to the one above) estimate for the first non-zero Dirichlet eigenvalue of geodesic balls, which actually yields inequalities for all Laplace eigenvalues
\begin{equation}
\label{Cro:2}
\lambda_k(g)\leqslant C_3(m)\frac{\mathit{Vol}_g(M)^2}{\conv(g)^{2m+2}}k^{2m},
\end{equation}
where $k\geqslant 1$ is an arbitrary integer. These inequalities for higher eigenvalues do not seem to appear in the literature, and we refer to Appendix~\ref{app:a} for related details.


The purpose of this paper is to prove a version of the Berger inequality~\eqref{Be:1} for all Laplace eigenvalues of Riemannian manifolds with an upper sectional curvature bound, and their minimal submanifolds. For example, we show that inequality~\eqref{Be:1}, as well as its neat version for higher Laplace eigenvalues, holds for manifolds of non-positive sectional curvature. More importantly, we show that these eigenvalue inequalities are {\em conformal} in nature, that is the ratio $\mathit{Vol_g(M)}/\inj(g)^{m}$ controls Laplace eigenvalues of all metrics conformal to $g$. We also discover another interesting feature of these inequalities -- they are naturally {\em inherited my minimal submanifolds} in $M$. Below we discuss the results in detail.

\subsection{Conformal nature of the Berger inequality}
Let $(M,g)$ be a closed $m$-dimensional Riemannian manifold whose sectional curvatures are not greater than $\delta\geqslant 0$. By $\rad(g)$ we denote the quantity $\min\{\inj(g),\pi/(2\sqrt{\delta})\}$. When $\delta=0$, we always assume that $\pi/(2\sqrt{\delta})$ equals $+\infty$, and hence, $\rad(g)$ coincides with the injectivity radius $\inj(g)$. Further, we denote by 
$$
0=\lambda_0(g)<\lambda_1(g)\leqslant\lambda_2(g)\leqslant\ldots\leqslant\lambda_k(g)\leqslant\ldots
$$
the Laplace eigenvalues of a metric $g$ on $M$ repeated with respect to multiplicity. Our first result gives the following conformal bounds for all Laplace eigenvalues.
\begin{theorem}
\label{mt}
Let $(M,g)$ be a closed Riemannian manifold whose all sectional curvatures are not greater than $\delta\geqslant 0$. Then for any Riemannian metric $\tilde g$ conformal to $g$ its Laplace eigenvalues satisfy the inequalities
$$
\lambda_k(\tilde g)\mathit{Vol}_{\tilde g}(M)^{2/m}\leqslant C_4(m)\left(\frac{\mathit{Vol}_g(M)}{\rad(g)^m}\right)^{1+2/m}k^{2/m}
$$
for any $k\geqslant 1$, where $\rad(g)$ equals $\min\{\inj(g),\pi/(2\sqrt{\delta})\}$, and $C_3(m)$ is a positive constant that depends on the dimension $m$ of $M$ only. In particular, the Laplace eigenvalues of the metric $g$ satisfy the inequalities
\begin{equation}
\label{Be:3}
\lambda_k(g)\leqslant C_4(m)\frac{\mathit{Vol}_g(M)}{\rad(g)^{m+2}}k^{2/m}
\end{equation}
for any $k\geqslant 1$.
\end{theorem}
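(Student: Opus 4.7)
The plan is to combine three ingredients: G\"unther-type volume comparison coming from the upper sectional curvature bound, a decomposition lemma of Grigor'yan--Netrusov--Yau type that balances $g$-geometry against $\tilde g$-measure, and the Rayleigh--Courant min-max principle.

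Since all sectional curvatures are $\leqslant\delta$, G\"unther's theorem gives the lower volume bound $\vol_g(B_g(p,r))\geqslant c(m)\,r^m$ for every $p\in M$ and every $r\leqslant\rad(g)$. This non-collapsing makes the metric space $(M,d_g)$ doubling below the scale $\rad(g)$, with the packing controlled by the ratio $\vol_g(M)/\rad(g)^m$. Apply now a Grigor'yan--Netrusov--Yau type decomposition to the metric measure space $(M,d_g,dv_{\tilde g})$ to obtain $k+1$ pairwise disjoint annular regions $A_i=B_g(p_i,2r_i)\setminus B_g(p_i,r_i)$, with $r_i\leqslant\rad(g)$ controlled from below, such that
$$
\vol_{\tilde g}\bigl(B_g(p_i,r_i)\bigr)\geqslant c_1(m)\,\frac{\vol_{\tilde g}(M)}{k}.
$$

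On each $A_i$ take the plateau function $f_i$ equal to $1$ on $B_g(p_i,r_i)$ and $0$ outside $B_g(p_i,2r_i)$, with $|\nabla f_i|_g\leqslant 1/r_i$. Writing $\tilde g=\varphi^2 g$ so that $dv_{\tilde g}=\varphi^m\,dv_g$ and $|\nabla f|_{\tilde g}^2=\varphi^{-2}|\nabla f|_g^2$, H\"older's inequality with exponents $m/(m-2)$ and $m/2$ gives
$$
\int_M|\nabla f_i|_{\tilde g}^2\,dv_{\tilde g}=\int_{A_i}|\nabla f_i|_g^2\,\varphi^{m-2}\,dv_g\leqslant\frac{1}{r_i^2}\,\vol_{\tilde g}(A_i)^{(m-2)/m}\vol_g(A_i)^{2/m}.
$$
(In dimension $m=2$ this reduces to the familiar conformal invariance of the Dirichlet energy.) Since the $f_i$ have disjoint supports, the min-max principle yields
$$
\lambda_k(\tilde g)\leqslant\max_i\frac{\int_M|\nabla f_i|_{\tilde g}^2\,dv_{\tilde g}}{\int_M f_i^2\,dv_{\tilde g}}.
$$
Combining this with the mass lower bound from the decomposition and the trivial estimate $\vol_{\tilde g}(A_i)\leqslant\vol_{\tilde g}(M)$ reduces the proof to controlling $\vol_g(A_i)^{2/m}/r_i^2$ against the packing constraint from the volume comparison.

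The crux of the argument is the decomposition step: one must simultaneously control the $\tilde g$-mass of the inner balls (which drives the denominator of the Rayleigh quotient and contributes a factor of $k$ in the numerator bound) and the $g$-radii of the outer balls (which couple to the conformal ratio $\vol_g(M)/\rad(g)^m$). Producing the sharp power $(m+2)/m$ on that ratio, rather than the weaker power $2$ that a direct Croke-type argument would yield, depends on exploiting the $g$-geometry via the H\"older interpolation above in addition to the $\tilde g$-measure data. The $k^{2/m}$ dependence, rather than the weaker linear $k$ that would come from a crude maximum of per-region bounds, arises from the algebraic optimisation of the $r_i$ against the packing constraint $\sum_i r_i^m\lesssim\vol_g(M)$ enforced by the decomposition.
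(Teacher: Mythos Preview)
Your outline has the right architecture --- G\"unther comparison, a GNY-type decomposition of $(M,d_g,dv_{\tilde g})$, plateau test functions, and the H\"older/conformal-invariance trick --- and this is exactly the route the paper takes. But two of the steps you describe as routine are in fact where the theorem is won, and as written your bookkeeping would not produce either the exponent $1+2/m$ on $\vol_g(M)/\rad(g)^m$ or the power $k^{2/m}$.

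First, the mass lower bound from the decomposition cannot read $\vol_{\tilde g}(B_g(p_i,r_i))\geqslant c_1(m)\vol_{\tilde g}(M)/k$ with $c_1$ depending only on $m$. The GNY constant is proportional to the cover refinement function $N(\rho)$, and the upper volume bound from Corollary~\ref{gb:c} forces $N(\rho)\sim (\vol_g(M)/\rad(g)^m)\rho^m$. Hence the correct lower bound is
\[
\vol_{\tilde g}(A_i)\;\geqslant\;\frac{\vol_{\tilde g}(M)}{k}\cdot c_1(m)\,\frac{\rad(g)^m}{\vol_g(M)},
\]
and this is precisely the origin of the extra full power of $\vol_g(M)/\rad(g)^m$ in the final estimate. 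With your $c_1(m)$ you would only ever see the exponent $2/m$ coming from the H\"older step.

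Second, the ``trivial estimate $\vol_{\tilde g}(A_i)\leqslant\vol_{\tilde g}(M)$'' is too weak: plugging it into your Rayleigh bound gives a factor $k$, not $k^{2/m}$. The paper's remedy is to apply the decomposition to produce $2(k+1)$ sets and then use pigeonhole on the disjointness of the $2A_i$'s to secure $\vol_{\tilde g}(2A_i)\leqslant\vol_{\tilde g}(M)/k$ for at least $k+1$ of them. After H\"older this yields the numerator bound $(\vol_{\tilde g}(M)/k)^{1-2/m}(\vol_g(M)/\rad(g)^m)^{2/m}$, and dividing by the denominator $\sim\vol_{\tilde g}(M)/k$ gives $k\cdot k^{-(1-2/m)}=k^{2/m}$. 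There is no ``algebraic optimisation of the $r_i$ against $\sum_i r_i^m\lesssim\vol_g(M)$'': the radii are handed to you by the decomposition, the constraint you wrote is not used, and the $k^{2/m}$ comes entirely from the pigeonhole on the $\tilde g$-measure of the outer sets. Finally, because the G\"unther lower bound only holds for $r\leqslant\rad(g)$, one has merely a \emph{small} cover refinement function, which forces the two-case dichotomy of Proposition~\ref{prop:2} (annuli of small outer radius versus sets with disjoint $r_0$-neighbourhoods); your sketch suppresses this.
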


Note that under the hypotheses of Theorem~\ref{mt}, even the inequality for the first non-zero Laplace eigenvalue in~\eqref{Be:3} seems to be absent in the literature. When the sectional curvatures of $M$ are non-positive, inequalities~\eqref{Be:3} give a neat generalisation of the Berger inequality, and improve Croke's inequalities~\eqref{Cro:2}. To our knowledge, it is unknown whether in the absence of a curvature hypothesis the power $k^{2m}$ in inequalities~\eqref{Cro:2} can be replaced by the asymptotically sharp power $k^{2/m}$.

Recall that a celebrated result by Korevaar~\cite{Kor} says that for any closed $m$-dimensional Riemannian manifold $(M,\tilde g)$ its Laplace eigenvalues satisfy the inequalities
$$
\lambda_k(\tilde g)\mathit{Vol}_{\tilde g}(M)^{2/m}\leqslant C k^{2/m},
$$
where $C$ is the constant that depends on the conformal class of a metric $\tilde g$ in a rather implicit way. Thus, Theorem~\ref{mt} can be viewed as an explicit version of Korevaar's result that describes the dependance of the constant on the geometry of a background metric $g$ in a given conformal class. Upper bounds for Laplace eigenvalues in terms of other conformal invariants can be also found in~\cite{AH, Ko17}. Using the Weyl law 
$$
\lambda_k(g)\mathit{Vol}_g(M)^{2/m}\sim\frac{4\pi^2}{\omega_m^{2/m}}k^{2/m}\qquad\text{as~ }k\to +\infty,
$$
where $\omega_m$ is the volume of a unit ball in the $m$-dimensional Euclidean space, we may pass to the limit as $k\to +\infty$ in the inequalities in Theorem~\ref{mt} to obtain that $\mathit{Vol}_g(M)\geqslant C_4(m)\rad(g)^{m}$. This inequality is well-known: it is a consequence of standard volume comparison theorems, and is reminiscent to the so-called "volume -- injectivity radius" inequality due to Berger~\cite{Be80}, see the discussion in Section~\ref{prems}. Thus, the collection of inequalities~\eqref{Be:3} can be viewed as a quantized version of the classical geometric inequality.

The proof of Theorem~\ref{mt} builds on the results from~\cite{GNY,CM08} and~\cite{AH}. The key ingredient is a construction of disjoint sets whose measure is carefully controlled by our geometric hypotheses. Though similar ideas, originating in the work by Buser~\cite{Bu79} and Korevaar~\cite{Kor}, have been used in  a few papers recently, see for example~\cite{AH, Ko14, HaKo}, and~\cite{Ko17}, our hypotheses are rather different from the previous work. In particular, we do not use a lower Ricci curvature bound for a background or auxiliary metric, which is so essential in most of the past papers. Our argument is based on the revision of recently developed techniques that allows to obtain a rather neat control of constants in the estimates for the measure of disjoint sets.

\subsection{The Berger inequality for minimal submanifolds}
Now we consider closed Riemannian manifolds $(\Sigma^n,g_\Sigma)$ that can be isometrically immersed into  $(M,g)$ as minimal submanifolds. In the sequel we might endow such a manifold $\Sigma^n$ with another metric $h$, and denote by 
$$
0=\lambda_0(\Sigma^n,h)<\lambda_1(\Sigma^n,h)\leqslant\lambda_2(\Sigma^n,h)\leqslant\ldots\leqslant\lambda_k(\Sigma^n,h)\leqslant\ldots
$$
its Laplace eigenvalues, repeated with respect to multiplicity. Our next result shows that conformal eigenvalue bounds in Theorem~\ref{mt} extend naturally to minimal submanifolds $\Sigma^n\subset M$.

\begin{theorem}
\label{mtm}
Let $(M,g)$ be a closed Riemannian manifold whose all sectional curvatures are not greater than $\delta\geqslant 0$, and $\Sigma^n\subset M$ a closed immersed minimal submanifold of dimension $n$. Then for any Riemannian metric $h$ on $\Sigma^n$ conformal to $g_\Sigma$ its Laplace eigenvalues satisfy the inequalities
$$
\lambda_k(\Sigma^n,h)\mathit{Vol}_{h}(\Sigma^n)^{2/n}\leqslant C_6(n)\left(\frac{\mathit{Vol}_g(\Sigma^n)}{\rad(g)^n}\right)^{1+2/n}k^{2/n}
$$
for any $k\geqslant 1$, where $\rad(g)$ is the ambient quantity $\min\{\inj(g),\pi/(2\sqrt{\delta})\}$, and $C_6(n)$ is a positive constant that depends on the dimension $n$ only. In particular, the Laplace eigenvalues of the metric $g_\Sigma$ satisfy the inequalities
\begin{equation}
\label{Be:4}
\lambda_k(\Sigma^n,g_\Sigma)\leqslant C_6(n)\frac{\mathit{Vol}_g(\Sigma^n)}{\rad(g)^{n+2}}k^{2/n}
\end{equation}
for any $k\geqslant 1$.
\end{theorem}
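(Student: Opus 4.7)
The plan is to recycle the argument from the proof of Theorem~\ref{mt}, adapted to the submanifold $\Sigma^n$ sitting inside $(M,g)$, with one crucial replacement: the G\"unther-type lower bound $\vol_g(B_M(p,r))\geqslant c_m r^m$ that the ambient case rests on is replaced by the classical monotonicity formula for minimal submanifolds. Indeed, the scheme of Theorem~\ref{mt} proceeds by first constructing $k$ pairwise disjoint ``annular capacitors'' $(A_i, A_i^*)$ whose $h$-mass is controlled from below by $c\,\vol_h(M)/k$ and whose ambient $g$-volumes are controlled from above by $C\,\vol_g(M)/k$, and then converting each into a test function for the variational characterisation of $\lambda_k(h)$ via a conformal H\"older estimate. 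Almost all of this machinery can be recycled, provided one has the correct lower bound for the $g$-volume of a ``ball'' of radius $r\leqslant\rad(g)$ on $\Sigma$.

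The required geometric input is the monotonicity inequality
$$
\vol_g\bigl(\Sigma^n\cap B_M(p,r)\bigr)\geqslant c_n\,r^n\qquad\text{for every }p\in\Sigma^n\text{ and every }0<r<\rad(g),
$$
where $c_n>0$ depends only on $n$. This is standard: setting $\rho(x)=\dist_g(p,x)$ and applying the Hessian comparison theorem, which is available because sectional curvatures are at most $\delta$ and $r<\pi/(2\sqrt{\delta})$, one obtains a lower bound of the form $\Hess_g(\rho^2/2)\geqslant\beta_\delta(\rho)\,g$ on $B_M(p,r)$. Tracing with respect to $g_\Sigma$ and using minimality of $\Sigma$ to discard the mean curvature term yields a differential inequality for $V(r):=\vol_g(\Sigma^n\cap B_M(p,r))$ that integrates to the stated bound.

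With this in hand, the disjoint-set construction of Theorem~\ref{mt} can be replayed on the metric measure space $(\Sigma^n,\,d_g|_{\Sigma\times\Sigma},\,dv_h)$, now using ambient balls $B_M(p,r)\cap\Sigma$ in the role previously played by intrinsic balls; monotonicity supplies exactly the lower volume bound the packing argument consumes. After slightly overshooting the number of capacitors and selecting $k$ of them with the smallest ambient $g$-volume, one obtains $k$ disjoint capacitors $(A_i, A_i^*)$ with $\vol_h(A_i)\gtrsim\vol_h(\Sigma^n)/k$, $\vol_g(A_i^*)\lesssim\vol_g(\Sigma^n)/k$, and each $A_i^*$ contained in an ambient ball of $g$-radius $r_k\asymp\min\{\rad(g),\,(\vol_g(\Sigma^n)/k)^{1/n}\}$. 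Cut-off test functions $\phi_i$ with $|\nabla\phi_i|_g\lesssim 1/r_k$ are substituted into the min-max principle, and the conformal H\"older estimate
$$
\int_\Sigma|\nabla\phi_i|_h^2\,dv_h\leqslant\vol_h(A_i^*)^{(n-2)/n}\Bigl(\int_\Sigma|\nabla\phi_i|_g^n\,dv_g\Bigr)^{2/n}
$$
combined with the $g$-volume bound on $A_i^*$ reduces the Rayleigh quotient to the geometric quantities appearing in the statement. The main obstacle is precisely this transfer of the packing machinery to a submanifold whose intrinsic distance bears no direct relation to $\dist_g$ and for which a uniform upper bound on $\vol_g(\Sigma^n\cap B_M(p,r))$ is unavailable; minimality pays off because monotonicity delivers the missing lower volume bound, while the overshoot-and-select step substitutes for a pointwise upper volume bound of the kind automatic in the ambient setting of Theorem~\ref{mt}.
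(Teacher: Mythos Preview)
Your overall plan---work on the pseudo-metric space $(\Sigma^n,\bar d,dv_h)$ with $\bar d$ the restriction of the ambient distance, invoke a GNY/Colbois--Maerten type decomposition, and estimate Rayleigh quotients via the conformal H\"older trick---is exactly the paper's strategy, and your identification of the monotonicity formula as the geometric substitute for G\"unther comparison is correct.

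The gap is your assertion that ``a uniform upper bound on $\mathit{Vol}_g(\Sigma^n\cap B_M(p,r))$ is unavailable'' and the workaround you build around it. In fact the \emph{same} monotonicity statement delivers the upper bound: since $r\mapsto \mathit{Vol}_g(B(p,r)\cap\Sigma^n)/V^n_\delta(r)$ (or $/A^n_\delta(r)$ when $\delta>0$) is non-decreasing, comparing with the right endpoint $r=\rad(g)$ gives
\[
\mathit{Vol}_g\bigl(B(p,r)\cap\Sigma^n\bigr)\leqslant C(n)\,\frac{\mathit{Vol}_g(\Sigma^n)}{\rad(g)^n}\,r^n,\qquad 0<r\leqslant\rad(g),
\]
which is precisely Corollary~\ref{gbm:c}. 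With this two-sided bound, Lemma~\ref{l:hom} produces a bona fide small cover refinement function $\bar N(\rho)=C(n)\bigl(\mathit{Vol}_g(\Sigma^n)/\rad(g)^n\bigr)\rho^n$, Proposition~\ref{prop:2} applies verbatim, and the rest of the proof runs in exact parallel to that of Theorem~\ref{mt}. The upper bound is used a second time in the Dirichlet-energy estimate, to control $(2/r_i)^n\mathit{Vol}_g(\bar B(\bar a_i,r_i))$ and $(1/R_i)^n\mathit{Vol}_g(\bar B(\bar a_i,2R_i))$ independently of the (variable) radii $r_i,R_i$ of the annuli.

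Without this upper bound your proposal does not go through as written. The decomposition theorem you intend to quote needs a cover refinement function $N(\rho)$ depending on $\rho$ alone; with only the lower bound $\mathit{Vol}_g(\bar B(p,r))\geqslant c_n r^n$, the packing estimate yields a bound on the number of $r/\rho$-balls needed to cover an $r$-ball that still depends on $r$, so no such $N(\rho)$ is produced and Proposition~\ref{prop:2} cannot be invoked. Your ``overshoot-and-select'' step (choosing capacitors with small $g$-volume) and the claim that all $A_i^*$ sit in balls of a single radius $r_k\asymp(\mathit{Vol}_g(\Sigma^n)/k)^{1/n}$ do not match how the decomposition actually outputs annuli with individually varying radii, and in any case that selection device operates \emph{after} the decomposition theorem has been applied, not as a substitute for its hypotheses. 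Once you realise the upper bound is already in hand, the detour is unnecessary.
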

Similar to the discussion after Theorem~\ref{mt}, we note that even the inequality for the first non-zero Laplace eigenvalue in~\eqref{Be:4} is new. Passing to the limit as $k\to +\infty$ in inequalities~\eqref{Be:4}, we obtain the lower bound for 
\begin{equation}
\label{v:rad}
\mathit{Vol}_g(\Sigma^n)\geqslant C_6(n)\rad(g)^n
\end{equation}
the volume of an immersed minimal submanifold $\Sigma^n$. This geometric inequality can be independently obtained from comparison monotonicity theorems for minimal submanifolds, see the discussion in Section~\ref{prems}. When the sectional curvatures of $M$ are non-positive, inequality~\eqref{v:rad} can be already derived from the work of Anderson~\cite{An82}. When the upper bound $\delta$ for sectional curvatures of $M$ is positive, to our knowledge, it is unknown whether the quantity used by Anderson is monotonic, see~\cite{GS87} for a related discussion. For this case we prove monotonicity of a different quantity, which might be of independent interest. These monotonicity theorems yield two-sided volume bounds for the volumes of extrinsic balls, and play a crucial role in the proof of Theorem~\ref{mtm}.

Theorem~\ref{mtm} can be extended to the case when $M$ is complete, but not necessarily compact. If the injectivity radius $\inj(g)$ of $M$ is positive, then the statement of Theorem~\ref{mtm} continues to hold for closed minimal submanifolds $\Sigma^n\subset M$. If $\inj(g)=0$, then the injectivity radius in the formula for $\rad(g)$ should be replaced by the quantity $\inf\{\inj_p(g):p\in\Sigma^n\}$. If $\Sigma^n$ is  not closed, then one can consider boundary value problems for domains $\Omega\subset\Sigma^n$. In this case the statement of Theorem~\ref{mtm} is amenable to extensions to the Neumann eigenvalue problem. Below we give a sample version of such a result. For the sake of simplicity we assume that the ambient manifold $M$ is a Cartan-Hadamard space, that is a complete simply-connected space with non-positive sectional curvatures. First, we introduce more notation.

Let $\Sigma^n$ be a complete minimal submanifold in a Cartan-Hadamard space $M$. By the monotonicity theorem of Anderson~\cite{An82} the ratio $\mathit{Vol}(B(p,r)\cap\Sigma^n)/(\omega_nr^n)$ is a non-decreasing function of $r>0$, where $B(p,r)$ is a ball of radius $r$ in $M$, and $\omega_n$ is the volume of  a unit ball in the Euclidean space $\mathbb R^n$. By $\theta(\Sigma^n)$ we denote the (possibly infinite) quantity
$$
\theta(\Sigma^n)=\lim_{r\to +\infty}\frac{\mathit{Vol}_g(B(p,r)\cap\Sigma^n)}{\omega_nr^n};
$$
it does not depend on a reference point $p\in M$, and is called the {\em density at infinity} of $\Sigma^n$. We have the following version of Theorem~\ref{mtm}.
\begin{theorem}
\label{mtm:extra}
Let $(M,g)$ be a Cartan-Hadamard manifold, and $\Sigma^n\subset M$ a complete properly immersed minimal submanifold. Then for any precompact domain $\Omega\subset\Sigma^n$ and any Riemannian metric $h$ on $\Omega$ conformal to $g_\Sigma$ its Neumann eigenvalues satisfy the inequalities
$$
\lambda_k(\Omega,h)\mathit{Vol}_{h}(\Omega)^{2/n}\leqslant C_7(n)\theta(\Sigma^n)^{1+2/n}k^{2/n}
$$
for any $k\geqslant 1$, where $C_7(n)$ is a positive constant that depends on the dimension $n$ only.
\end{theorem}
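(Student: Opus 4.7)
\medskip\noindent\textbf{Proof proposal.} The plan is to mimic the argument for Theorem~\ref{mtm}, replacing the local ratio $\mathit{Vol}_g(\Sigma^n)/\rad(g)^n$ by the global density at infinity $\theta(\Sigma^n)$. Under Anderson's monotonicity the latter controls the normalised extrinsic volume $\mathit{Vol}_g(B(p,r)\cap\Sigma^n)/(\omega_n r^n)$ at \emph{every} scale, not only at small scales. Since $M$ is Cartan--Hadamard we have $\delta=0$ and $\inj(g)=+\infty$, so there is no restriction on the admissible radii coming from $\rad(g)$, and we are free to use balls of arbitrary size.

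First I would record the two-sided bound for extrinsic balls centred at points $p\in\Sigma^n$:
$$
\omega_n r^n\;\leqslant\;\mathit{Vol}_g(B(p,r)\cap\Sigma^n)\;\leqslant\;\theta(\Sigma^n)\,\omega_n r^n
\qquad\text{for all }r>0.
$$
The lower bound is the small-scale limit of Anderson's monotone density ratio (density one at a regular point of the minimal immersion); the upper bound is its large-scale limit, which is finite precisely because we assume $\theta(\Sigma^n)<\infty$. This scale-independent two-sided control is the analogue, in the present setting, of the $r\leqslant\rad(g)$ volume bound used in the proof of Theorem~\ref{mtm}.

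Next I would invoke the covering/packing machinery from~\cite{GNY,CM08,AH} already exploited for Theorem~\ref{mtm}: for each integer $k\geqslant1$ construct a family of $N\geqslant k$ pairwise disjoint extrinsic annular capacitors $(B(p_i,r_i)\cap\Omega,\,B(p_i,2r_i)\cap\Omega)$ with centres $p_i\in\Omega$, whose inner balls each carry at least a uniform fraction of $\mathit{Vol}_h(\Omega)/N$ and whose outer balls have bounded overlap multiplicity. The Vitali-type selection of $(p_i,r_i)$ is driven by the scale-free volume estimate above, and all resulting combinatorial constants depend only on $n$ and $\theta(\Sigma^n)$. Composing a radial cut-off with the extrinsic distance to $p_i$ produces test functions $u_1,\dots,u_N$ of mutually disjoint support; using conformal invariance of the Dirichlet integral in dimension $n$, together with the upper volume bound, one obtains
$$
\int_\Omega|du_i|_h^2\,dv_h\;\leqslant\;C(n)\,\theta(\Sigma^n)^{2/n}\,r_i^{-2}\int_\Omega u_i^2\,dv_h.
$$
Inserting the span of the $u_i$ into the min--max characterisation of the Neumann eigenvalues of $(\Omega,h)$ then yields the claimed inequality.

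The main obstacle is the packing step. One must perform a Vitali-type selection with \emph{extrinsic} balls on a properly-but-not-necessarily-embedded submanifold, so that a single $B(p,r)\cap\Sigma^n$ may split into several intrinsic components, and simultaneously ensure that the selected annuli behave well against the free boundary $\partial\Omega$ in the Neumann setting. The essential point is that the global monotonicity limit $\theta(\Sigma^n)$, valid at \emph{all} radii, is exactly what allows the packing argument to close up with a constant depending only on $n$ and $\theta(\Sigma^n)$: absent an a priori cap $\rad(g)$ on the radii, only a scale-free density hypothesis can replace the local volume--radius ratio used in Theorem~\ref{mtm}.
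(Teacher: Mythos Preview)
Your proposal is essentially correct and matches the paper's argument: record the scale-free two-sided bound
\[
\omega_n r^n\leqslant \mathit{Vol}_g(\bar B(\bar p,r))\leqslant \theta(\Sigma^n)\,\omega_n r^n\qquad(r>0),
\]
feed it into the covering machinery, build radial cut-offs on the resulting annuli, and estimate Rayleigh quotients via conformal invariance of the $n$-energy.

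Two points deserve correction or sharpening.

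\medskip
\noindent\textbf{(1) Which decomposition theorem.} You say you will invoke ``the machinery already exploited for Theorem~\ref{mtm}''. That proof uses Proposition~\ref{prop:2}, the \emph{small} cover refinement version, which outputs either annuli \emph{or} sets with $r_0$-thick collars, forcing a case split. Here the two-sided volume bound holds for \emph{all} $r>0$, so by Lemma~\ref{l:hom} the function $\bar N(\rho)=\theta(\Sigma^n)(6\rho)^n$ is a full cover refinement function, and the paper applies the simpler Proposition~\ref{prop:1} directly. This yields only annuli (no second case) and gives $\bar c=8\bar N(1600)=C(n)\theta(\Sigma^n)$, which is exactly where the factor $\theta(\Sigma^n)^1$ in the final exponent $1+2/n$ comes from. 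Your version would also work but is unnecessarily elaborate.

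\medskip
\noindent\textbf{(2) The displayed energy bound is mis-stated.} After H\"older and conformal invariance one gets
\[
\int_\Omega|\nabla\bar u_i|_h^2\,d\mathit{Vol}_h\leqslant \mathit{Vol}_h(2\bar A_i)^{1-2/n}\Bigl((2/r_i)^n\mathit{Vol}_g(\bar B(\bar a_i,r_i))+(1/R_i)^n\mathit{Vol}_g(\bar B(\bar a_i,2R_i))\Bigr)^{2/n},
\]
and the upper volume bound makes the bracket $\leqslant C(n)\theta(\Sigma^n)$ with \emph{no residual} $r_i^{-2}$; the radii cancel. Your displayed inequality with the factor $r_i^{-2}$ is wrong as written (and, if taken literally, would not close up to a $k^{2/n}$ bound).

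\medskip
\noindent\textbf{On your ``main obstacle''.} Neither the non-embeddedness nor the Neumann boundary is a genuine difficulty. The paper works throughout on the pseudo-metric space $(\Sigma^n,\bar d)$ with $\bar d$ the pull-back of the ambient distance; extrinsic balls are then just pseudo-metric balls, and Propositions~\ref{prop:1}--\ref{prop:2} are stated for pseudo-metric spaces precisely to absorb multi-sheetedness. For the Neumann problem one simply takes the measure $\bar\mu$ to be $\mathit{Vol}_h$ restricted to $\Omega$; the test functions $\bar u_i$, being Lipschitz compositions with $\bar d(\cdot,\bar a_i)$, are defined on all of $\Sigma^n$ and are admissible for the Neumann quotient on $\Omega$ by restriction, with no boundary adjustment needed.
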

We end this discussion on the Neumann problem with the following two remarks. First, when $M$ is a Euclidean space $\mathbb R^m$, there is an abundance of examples when $\theta(\Sigma^n)$ is finite -- this is always the case when $\Sigma^n$ has finite total curvature. More precisely, by the classical results of Osserman~\cite{Os63,Os64}, Chern and Osserman~\cite{CO67}, and Anderson~\cite{An84}, such manifolds have finite topological type, that is, they are diffeomorphic to smooth compact manifolds with finitely many points removed. These points correspond to the ends of a minimal submanifold $\Sigma^n$, and the density at infinity $\theta(\Sigma^n)$ coincides with their number counted with multiplicity. When $n\geqslant 3$, by~\cite{An84} each end of $\Sigma^n$ is embedded and its multiplicity equals one. In other words, when $n\geqslant 3$, the density at infinity of such a minimal submanifolds is precisely the number of ends. Thus, Theorem~\ref{mtm:extra} yields {\em topological eigenvalue bounds} for domains in minimal submanifolds  $\Sigma^n\subset\mathbb R^m$ of finite total curvature.

Second, to our knowledge, no upper bounds for Neumann eigenvalues of domains in minimal submanifolds $\Sigma^n\subset\mathbb R^m$ is known until now, unless $\Sigma^n$ is an affine subspace. The situation is in contrast with the Dirichlet problem, where (in this case more natural lower) bounds for the Dirichlet eigenvalues have been known since 1984, see~\cite{CLY84,Ma86}. Thus, Theorem~\ref{mtm:extra} gives an answer to the question that appears to have been open for some time.

\subsection{Ambient bounds for Laplace eigenvalues of minimal submanifolds}
There is another version of Theorem~\ref{mtm} that leads to bounds for Laplace eigenvalues of minimals submanifolds in terms of geometry of the ambient space.

\begin{theorem}
\label{tma1}
Let $(M,g)$ be a closed Riemannian manifold whose all sectional curvatures are not greater than $\delta\geqslant 0$, and $\Sigma^n\subset M$ a closed immersed minimal submanifold of dimension $n$. Then for any Riemannian metric $h$ on $\Sigma^n$ conformal to $g_\Sigma$ its Laplace eigenvalues satisfy the inequalities
$$
\lambda_k(\Sigma^n,h)\mathit{Vol}_{h}(\Sigma^n)^{2/n}\leqslant C_4(m)\left(\frac{\mathit{Vol}_g(M)}{\rad(g)^{m+2}}\right){\mathit{Vol}_g(\Sigma^n)}^{2/n}k^{2/n}
$$
for any $k\geqslant 1$, where $\rad(g)$ is the ambient quantity $\min\{\inj(g),\pi/(2\sqrt{\delta})\}$, and $C_4(m)$ is the constant from Theorem~\ref{mt}. In particular, the Laplace eigenvalues of the metric $g_\Sigma$ satisfy the inequalities
\begin{equation}
\label{Be:5}
\lambda_k(\Sigma^n,g_\Sigma)\leqslant C_4(m)\frac{\mathit{Vol}_g(M)}{\rad(g)^{m+2}}k^{2/n}
\end{equation}
for any $k\geqslant 1$.
\end{theorem}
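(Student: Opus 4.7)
The proof adapts the decomposition method behind Theorem~\ref{mt} to the submanifold $\Sigma^n$. I would apply the Grigor'yan--Netrusov--Yau type annular decomposition used for Theorem~\ref{mt} in the ambient manifold $(M,g)$, but now with centers $p_i\in\Sigma^n$, producing $k+1$ pairwise disjoint concentric pairs of geodesic balls $(B_i,2B_i)$ with $2r_i\leqslant\rad(g)/2$, where $r_i$ is the radius of $B_i$. The key point is that the packing number is constrained by two ingredients: the ambient volume comparison $\mathit{Vol}_g(B(p,r))\geqslant c_m r^m$ together with the \emph{submanifold monotonicity} $\mathit{Vol}_g(B(p,r)\cap\Sigma^n)\geqslant c_n r^n$ coming from the monotonicity theorems established earlier in the paper (including the new positive-upper-curvature case). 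Careful bookkeeping of these constraints gives a lower bound of the form
$$
\frac{1}{r_i^{2}}\leqslant C(m)\cdot\frac{\mathit{Vol}_g(M)}{\rad(g)^{m+2}}\cdot k^{2/n},
$$
in which the exponent $k^{2/n}$, as opposed to the $k^{2/m}$ of Theorem~\ref{mt}, reflects the use of submanifold-dimensional monotonicity.

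For each annulus I would define the Lipschitz cutoff $\phi_i$ on $M$ equal to $1$ on $B_i$, supported in $2B_i$, with $|\nabla\phi_i|_g\leqslant 1/r_i$, and restrict to $\Sigma^n$ to obtain $k+1$ disjointly supported test functions. For a conformal metric $h=e^{2\psi}g_\Sigma$ I would estimate each Rayleigh quotient by the standard H\"older trick, noting that
$$
\int_{\Sigma^n}|\nabla\phi_i|_h^{2}\,dV_h\leqslant\frac{1}{r_i^{2}}\int_{2B_i\cap\Sigma^n}e^{(n-2)\psi}\,dV_{g_\Sigma},
$$
and applying H\"older with conjugate exponents $n/(n-2)$ and $n/2$ to separate the right-hand side into $\mathit{Vol}_h(2B_i\cap\Sigma^n)^{(n-2)/n}\cdot\mathit{Vol}_g(2B_i\cap\Sigma^n)^{2/n}$. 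Combined with $\int\phi_i^{2}\,dV_h\geqslant\mathit{Vol}_h(B_i\cap\Sigma^n)$, the balancing $\mathit{Vol}_h(B_i\cap\Sigma^n)\geqslant c\,\mathit{Vol}_h(\Sigma^n)/k$ built into the decomposition, and the crude bound $\mathit{Vol}_g(2B_i\cap\Sigma^n)\leqslant\mathit{Vol}_g(\Sigma^n)$, this yields the claimed inequality after multiplying by $\mathit{Vol}_h(\Sigma^n)^{2/n}$ and passing to the max over $i$ via the min--max characterization of $\lambda_k$.

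The main technical obstacle is the first step: engineering the decomposition so that the submanifold-dimensional exponent $k^{2/n}$ appears, rather than the ambient $k^{2/m}$. This rests on the two-sided monotonicity for extrinsic balls of minimal submanifolds valid up to scale $\rad(g)$, and is the point where the argument genuinely exploits the submanifold structure rather than just invoking Theorem~\ref{mt} on $M$.
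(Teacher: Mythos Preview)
There is a genuine gap. The displayed claim $1/r_i^{2}\leqslant C(m)\,(\mathit{Vol}_g(M)/\rad(g)^{m+2})\,k^{2/n}$ cannot be obtained from the ingredients you list. Both the ambient bound $\mathit{Vol}_g(B(p,r))\geqslant c_m r^m$ and the extrinsic monotonicity $\mathit{Vol}_g(B(p,r)\cap\Sigma^n)\geqslant c_n r^n$ are \emph{lower} bounds on volumes; for a packing by $k$ disjoint balls they yield \emph{upper} bounds on the radii, not lower bounds. The Grigor'yan--Netrusov--Yau / Colbois--Maerten / Hassannezhad decomposition (Proposition~\ref{prop:2}) produces annuli whose radii may be arbitrarily small---only the outer radii are bounded above. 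Hence separating out $1/r_i^{2}$ and then using the crude estimate $\mathit{Vol}_g(2B_i\cap\Sigma^n)\leqslant\mathit{Vol}_g(\Sigma^n)$ leaves an uncontrolled factor. (Even granting the displayed bound, your bookkeeping would give an extra power of $k$.)

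The paper never tries to bound $r_i$ from below. It applies Proposition~\ref{prop:2} to $(M,\dist_g)$ with the push-forward measure $\imath_*\mathit{Vol}_h$ and the \emph{ambient} small cover refinement function $N(\rho)=C_{11}(m)(\mathit{Vol}_g(M)/\rad(g)^m)\rho^m$, exactly as in Theorem~\ref{mt}; the factor $\mathit{Vol}_g(M)/\rad(g)^m$ enters through the constant $c=64N(1600)$ in the lower bound $\mathit{Vol}_h(\bar A_i)\geqslant\mathit{Vol}_h(\Sigma^n)/(4ck)$. The radii disappear from the energy estimate because one works with the conformally invariant $n$-energy on $\Sigma^n$ and applies the \emph{upper} extrinsic-ball bound of Corollary~\ref{gbm:c}, $\mathit{Vol}_g(B(p,r)\cap\Sigma^n)\leqslant 2^n(\mathit{Vol}_g(\Sigma^n)/\rad(g)^n)r^n$, which cancels the $(1/r_i)^n$ and $(1/R_i)^n$ from the gradient. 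This is what produces the factor $(\mathit{Vol}_g(\Sigma^n)/\rad(g)^n)^{2/n}$ and hence the $\rad(g)^{-2}$. The exponent $k^{2/n}$ comes from the H\"older step on $\Sigma^n$ (dimension $n$), not from any submanifold-specific modification of the decomposition.
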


We proceed with one more related result. It also gives eigenvalue bounds in terms of geometry of the ambient space, but has an extra, more traditional, hypothesis -- we additionally assume that the Ricci curvature of the ambient space is bounded below.

\begin{theorem}
\label{tma2}
Let $(M,g)$ be a closed Riemannian manifold whose all sectional curvatures are not greater than $\delta\geqslant 0$, and Ricci curvature is bounded below, $\mathit{Ricci}\geqslant -(m-1)\kappa$, where $\kappa\geqslant 0$. Let $\Sigma^n\subset M$ be a closed immersed minimal submanifold of dimension $n$. Then for any Riemannian metric $h$ on $\Sigma^n$ conformal to $g_\Sigma$ its Laplace eigenvalues satisfy the inequalities
$$
\lambda_k(\Sigma^n,h)\mathit{Vol}_{h}(\Sigma^n)^{2/n}\leqslant C_8(m)\max\{\kappa,\rad(g)^{-2}k^{2/n}\}{\mathit{Vol}_g(\Sigma^n)}^{2/n} 
$$
for any $k\geqslant 1$, where $\rad(g)$ is the ambient quantity $\min\{\inj(g),\pi/(2\sqrt{\delta})\}$, and $C_8(m)$ is a positive constant that depends on the dimension $m$ only. In particular, the Laplace eigenvalues of the metric $g_\Sigma$ satisfy the inequalities
\begin{equation}
\label{Be:6}
\lambda_k(\Sigma^n,g_\Sigma)\leqslant C_8(m)\max\{\kappa,\rad(g)^{-2}k^{2/n}\}
\end{equation}
for any $k\geqslant 1$.
\end{theorem}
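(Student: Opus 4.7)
The plan is to adapt the covering-and-test-function strategy of Theorems~\ref{mt} and~\ref{mtm} — the Grigor'yan-Netrusov-Yau construction of $k$ pairwise disjoint ambient annuli carrying Lipschitz cut-offs with a controlled Rayleigh quotient — by additionally exploiting the volume doubling that the ambient Ricci lower bound $\mathit{Ricci}\geqslant-(m-1)\kappa$ provides at scales below $\kappa^{-1/2}$ via Bishop's theorem. Combined with the monotonicity formula for minimal submanifolds (as in the proof of Theorem~\ref{mtm}, cf.\ Section~\ref{prems}), this doubling furnishes two-sided control on the measure $\mu(A):=\vol_{g_\Sigma}(A\cap\Sigma^n)$ of ambient balls centred on $\Sigma^n$ of radius at most $r_0:=\min\{\rad(g),\kappa^{-1/2}\}$, namely
$$
c(m)\,r^n\leqslant\vol_{g_\Sigma}(B(p,r)\cap\Sigma^n)\leqslant C(m)\,r^n\qquad\text{for }p\in\Sigma^n,\ r\leqslant r_0.
$$
The lower bound is the minimal-submanifold monotonicity from Section~\ref{prems}; the upper bound combines the ambient Bishop estimate $\vol_g(B(p,r))\leqslant C(m)r^m$ with the minimality of $\Sigma^n$ to bound the number of sheets of $\Sigma^n$ meeting $B(p,r)$. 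The measure $\mu$ is then doubling on ambient balls of radius $\leqslant r_0$, with doubling constant depending only on $m$.

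Setting
$$
\rho:=\min\{\kappa^{-1/2},\,\rad(g)k^{-1/n}\},\qquad\text{so that}\qquad\rho^{-2}=\max\{\kappa,\rad(g)^{-2}k^{2/n}\},
$$
the Grigor'yan-Netrusov-Yau covering lemma (in the form used in~\cite{GNY,AH}) applied to $\mu$ at the scale $\rho$ produces $k$ pairwise disjoint ambient annuli $A_i=B(p_i,2r_i)\setminus B(p_i,r_i)$ with $p_i\in\Sigma^n$, $r_i\leqslant\rho$, and $\mu(B(p_i,r_i))\geqslant c(m)\vol_{g_\Sigma}(\Sigma^n)/k$. On each annulus I take a Lipschitz cut-off $\varphi_i\colon\Sigma^n\to[0,1]$ equal to $1$ on $B(p_i,r_i)\cap\Sigma^n$, vanishing outside $B(p_i,2r_i)\cap\Sigma^n$, and satisfying $|\nabla\varphi_i|_{g_\Sigma}\leqslant 1/r_i$. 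Following the conformal Rayleigh bookkeeping of~\cite{AH,Ko17} — a H\"older inequality separating the factor $\phi^{n-2}$ from $\phi^n$ that appears on rewriting the $h$-Rayleigh quotient for $h=\phi^2 g_\Sigma$ in terms of $g_\Sigma$-integrals — disjointness of the supports together with min-max delivers
$$
\lambda_k(\Sigma^n,h)\,\vol_h(\Sigma^n)^{2/n}\leqslant C(m)\,\rho^{-2}\,\vol_{g_\Sigma}(\Sigma^n)^{2/n},
$$
which is the stated inequality. The pure-Laplace case $h=g_\Sigma$ follows by cancelling $\vol_{g_\Sigma}(\Sigma^n)^{2/n}$ from both sides.

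The hard part will be the upper half $\mu(B(p,r))\leqslant C(m)r^n$ in the first step. The standard monotonicity formula supplies only the Euclidean-type lower bound, and the matching upper bound for a general \emph{immersed} minimal submanifold is subtle because $\Sigma^n$ may a priori visit an ambient ball through many sheets. Ensuring that the resulting doubling constant depends only on $m$, uniformly as $\rho$ crosses between the two regimes $\kappa^{-1/2}$ and $\rad(g)k^{-1/n}$, is exactly the technical input that the Ricci lower bound makes possible; it is the feature distinguishing this theorem from Theorem~\ref{mtm} and is what allows the right-hand side of~\eqref{Be:6} to be independent of $\vol_g(\Sigma^n)$.
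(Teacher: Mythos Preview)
Your proposal has a genuine gap: the upper bound
$$
\mathit{Vol}_{g_\Sigma}(B(p,r)\cap\Sigma^n)\leqslant C(m)\,r^n
$$
with $C(m)$ depending only on $m$ is \emph{false}, and neither the ambient Ricci lower bound nor minimality can rescue it. A closed geodesic on a flat torus that winds $N$ times around a short cycle is a minimal submanifold whose intersection with a fixed small ambient ball has length of order $N$ times the radius; the same happens for totally geodesic flat $n$-tori in flat $m$-tori. The ambient Bishop bound $\mathit{Vol}_g(B(p,r))\leqslant C(m)r^m$ constrains $m$-dimensional volume, not the $n$-dimensional measure carried by $\Sigma^n$, and there is no mechanism that converts it into a sheet count. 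In fact the monotonicity argument of Section~\ref{prems} gives at best $\mathit{Vol}_g(B(p,r)\cap\Sigma^n)\leqslant C(n)\,(\mathit{Vol}_g(\Sigma^n)/\rad(g)^n)\,r^n$ (Corollary~\ref{gbm:c}), and the factor $\mathit{Vol}_g(\Sigma^n)$ cannot be removed. Consequently the measure $\mu$ you introduce is \emph{not} doubling with a constant depending only on $m$, and your intended application of the decomposition lemma at scale $\rho$ collapses.

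The paper sidesteps this entirely. The key observation is that in Proposition~\ref{prop:2} the \emph{cover refinement function} is a property of the metric space alone, decoupled from the measure being decomposed. The Ricci lower bound is used only to show, via Bishop--Gromov, that the \emph{ambient} space $(M,\dist_g)$ (scaled so that $\min\{\kappa^{-1/2},\rad(g)\}=3$) admits a small cover refinement function $N_0(\rho)=(6\rho)^m e^{m-1}$ depending on $m$ alone; Proposition~\ref{prop:2} is then applied to the push-forward measure $\imath_*\mathit{Vol}_h$ on $M$. The Dirichlet-energy estimates for the pulled-back test functions still invoke Corollary~\ref{gbm:c} and therefore still produce a factor $\mathit{Vol}_g(\Sigma^n)^{2/n}$ on the right-hand side of the general conformal inequality. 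The independence of~\eqref{Be:6} from $\mathit{Vol}_g(\Sigma^n)$ is not a consequence of any sheet bound, but simply the cancellation of $\mathit{Vol}_g(\Sigma^n)^{2/n}$ against $\mathit{Vol}_h(\Sigma^n)^{2/n}$ when $h=g_\Sigma$.
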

To our knowledge, Theorems~\ref{tma1} and~\ref{tma2} are first results in the literature that give upper bounds for Laplace eigenvalues in terms of ambient geometry. Previously, spectral properties (mostly related to the first non-zero eigenvalue) of minimal submanifolds have been studied in rank one symmetric  spaces only, see~\cite{LiYau82,EHI09,Ko17} and references therein. Note also that any complex submanifold of a K\"ahler manifold is minimal, and hence, the theorems above yield eigenvalue bounds for all complex submanifolds in terms of geometry of the ambient K\"ahler manifold. It is extremely interesting to know whether such upper bounds for complex submanifolds can be extended to all K\"ahler metrics with cohomologous K\"ahler forms. For projective submanifolds such results are obtained in~\cite{Ko18}.

Concerning lower bounds for minimal submanifolds, we mention the following result due to Cheng and Tysk in~\cite{CT94}:  for any closed minimal submanifold $\Sigma^n\subset M$ its Laplace eigenvalues satisfy the inequalities
$$
C(n,M)k^{2/n}\leqslant\lambda_k(\Sigma^n,g)\mathit{Vol}_g(\Sigma^n)^{2/n}
$$
for any $k\geqslant \bar C(n,M)\mathit{Vol}_g(\Sigma^n)$, where $C(n,M)$ and $\bar C(n,M)$ are positive constants that depend on the dimension $n$ of $\Sigma^n$ and the geometry of $M$ in a rather implicit way. It is important to note that, in contrast with these lower bounds, the scale-invariant quantities $\lambda_k(\Sigma^n,g)\mathit{Vol}_g(\Sigma^n)^{2/n}$ {\em can not be bounded above} in terms of the ambient geometry only. To see this, recall that by~\cite{CM19} for any so-called bumpy metric $g$ on a closed ambient manifold $M$ of dimension $m$, where $3\leqslant m\leqslant 7$, there is a sequence of closed connected embedded minimal hypersurfaces $\{\Sigma_i^{m-1}\}$ whose volumes tend to $+\infty$. As is known~\cite{Wh91,Wh17}, bumpy metrics form a dense subset in the set of all metrics on $M$, and in particular, we may choose a bumpy metric $g$ of positive Ricci curvature.  Then, by the result of Choi and Wang~\cite{CW83}, we conclude that
$$
\lambda_1(\Sigma_i^{m-1},g)\mathit{Vol}_g(\Sigma_i^{m-1})^{2/(m-1)}\geqslant C\mathit{Vol}_g(\Sigma_i^{m-1})^{2/(m-1)}\rightarrow +\infty\qquad\text{when}\quad i\to+\infty,
$$
where $C>0$ is a constant that depends on the lower bound for the Ricci curvature. Thus, no ambient upper bound for $\lambda_k(\Sigma^n,g)\mathit{Vol}_g(\Sigma^n)^{2/n}$ for any $k\geqslant 1$ may exist.

We end with a brief discussion of the following corollary of Theorem~\ref{tma2}, which gives particularly simple estimates for Laplace eigenvalues of minimal submanifolds in certain positively curved spaces.
\begin{corollary}
\label{cm}
Let $(M,g)$ be a compact Riemannian manifold such that one of the following holds:
\begin{itemize}
\item [(i)] either $M$ is even-dimensional and its sectional curvatures satisfy the bounds
$$
0<K_p(\sigma)\leqslant\delta\qquad\text{for any plane }\sigma\in T_pM,
$$
for any point $p\in M$;

\item [(ii)] or $M$ is simply connected and has $\frac{1}{4}$-pinched sectional curvatures,
$$
\frac{1}{4}\delta\leqslant K_p(\sigma)\leqslant\delta\qquad\text{for any plane }\sigma\in T_pM,
$$
for any point $p\in M$, where $\delta>0$.
\end{itemize}
Let $\Sigma^n\subset M$ be a closed immersed minimal submanifold. Then for any Riemannian metric $h$ on $\Sigma^n$ conformal to $g_\Sigma$ its Laplace eigenvalues satisfy the inequalities
$$
\lambda_k(\Sigma^n,h)\mathit{Vol}_h(\Sigma^n)^{2/n}\leqslant C_9(m)\delta\mathit{Vol}_g(\Sigma^n)^{2/n}k^{2/n}
$$
for any $k\geqslant 1$, where $C_9(m)$ is a positive constant that depends on the dimension $m$ of $M$ only. In particular, the Laplace eigenvalues of a metric $g_\Sigma$ satisfy the inequalities
$$
\lambda_k(\Sigma^n,g)\leqslant C_9(m)\delta k^{2/n}
$$
for any $k\geqslant 1$.
\end{corollary}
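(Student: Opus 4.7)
The plan is to deduce Corollary~\ref{cm} directly from Theorem~\ref{tma2} by showing that in both settings (i) and (ii) the ambient Ricci curvature is non-negative and the radius quantity $\rad(g)$ is controlled from below by $\pi/(2\sqrt{\delta})$. Once these two ingredients are in place, the inequality reduces to a substitution.

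First I would check the Ricci hypothesis. In case (i), the sectional curvatures are strictly positive, so $\mathit{Ricci}\geqslant 0$; in case (ii) the $\frac{1}{4}$-pinching gives $\mathit{Ricci}\geqslant (m-1)\delta/4>0$. In either situation we may take $\kappa=0$ in the statement of Theorem~\ref{tma2}, which reduces the right-hand side there to
\[
C_8(m)\,\rad(g)^{-2}\,\mathit{Vol}_g(\Sigma^n)^{2/n}\,k^{2/n}.
\]

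Next I would invoke Klingenberg's classical injectivity radius estimates. For an even-dimensional compact manifold with $0<K\leqslant\delta$ Klingenberg's theorem yields $\inj(g)\geqslant\pi/\sqrt{\delta}$, and the same lower bound holds for simply connected manifolds whose sectional curvatures are $\frac{1}{4}$-pinched between $\delta/4$ and $\delta$. Consequently, in either of the hypotheses (i) or (ii),
\[
\rad(g)=\min\bigl\{\inj(g),\pi/(2\sqrt{\delta})\bigr\}=\pi/(2\sqrt{\delta}),
\]
so $\rad(g)^{-2}=4\delta/\pi^2$.

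Plugging this value of $\rad(g)^{-2}$ into the bound obtained above from Theorem~\ref{tma2} yields
\[
\lambda_k(\Sigma^n,h)\,\mathit{Vol}_h(\Sigma^n)^{2/n}\leqslant\frac{4}{\pi^2}C_8(m)\,\delta\,\mathit{Vol}_g(\Sigma^n)^{2/n}\,k^{2/n},
\]
which is precisely the claimed inequality with $C_9(m)=4C_8(m)/\pi^2$. The last inequality in the statement is then obtained by specialising $h$ to $g_\Sigma$ and using $\mathit{Vol}_{g_\Sigma}(\Sigma^n)=\mathit{Vol}_g(\Sigma^n)$. There is no genuine obstacle here; the only point requiring care is citing the correct version of Klingenberg's injectivity radius estimate for each of the two alternative geometric hypotheses, as these are the results that turn the upper sectional curvature bound into a quantitative lower bound on $\rad(g)$.
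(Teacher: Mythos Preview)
Your proposal is correct and follows exactly the approach indicated in the paper: the corollary is stated there as a direct consequence of Theorem~\ref{tma2} combined with Klingenberg's injectivity radius estimates (cited as~\cite{AM97,Pet}). Your identification of $\kappa=0$ from the positive sectional curvature and of $\rad(g)=\pi/(2\sqrt{\delta})$ from Klingenberg's bound is precisely what is needed, and your remark that the only delicate point is invoking the appropriate version of Klingenberg's theorem for each hypothesis is well taken.
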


Corollary~\ref{cm} is a direct consequence of Theorem~\ref{tma2} and Klingenberg's bounds for the injectivity radius, see~\cite{AM97,Pet}. The most significant difference between the two cases in it is the pinching condition that appears in the latter. Note that it imposes strong topological restrictions on $M$: the universal cover of $M$ has to be diffeomorphic to a compact symmetric space of rank one, see~\cite{AM97,BS09}. As examples with geodesics and minimal tori in the Berger spheres show, when $M$ is odd-dimensional, this condition is essential for an upper bound for the Laplace eigenvalues. More generally, the statements above suggest that the relationship between the injectivity radius of $M$ and Laplace eigenvalues of minimal submanifolds might be interesting on its own. In dimension one, it traces to the classical relationship between the injectivity radius and the lengths of closed geodesics, see~\cite{Pet}.

\subsection{Organisation of the paper}
The paper is organised in the following way. In Section~\ref{prems} we discuss volume comparison theorems, and closely related volume monotonicity theorems for minimal submanifolds in Riemannian manifolds whose sectional curvatures are bounded above. In Section~\ref{revisit} we revisit the recent constructions, due to~\cite{GNY,CM08,AH}, of disjoint sets with controlled amount of measure in pseudo-metric spaces. The improvements obtained there are necessary for our main results. The proofs of Theorems~\ref{mt}--\ref{tma2} are collected in Section~\ref{proofs}. The main arguments in all proofs follow the same strategy, but use different ingredients from previous Sections~\ref{prems} and~\ref{revisit}. There is also a certain logical dependence between the proofs of different statements --  in one of them we may refer to the notation or argument used in another. The paper has a short appendix, where we prove inequalities~\eqref{Cro:2}, extending to higher Laplace eigenvalues the inequality for the first eigenvalue found by Croke~\cite{Cro80} in 1980.


\section{Preliminaries}
\label{prems}
\subsection{Volume comparison and its consequences}
Let $(M,g)$ be a complete $m$-dimensional Riemannian manifold whose sectional curvatures are not greater than $\delta$, where $\delta\in\mathbb R$. We start with recalling the background material on volume comparison theorems for such manifolds. First, we introduce the necessary notation. Below by $\sn_\delta$ we denote the real-valued function given by the formula
\begin{equation}
\label{sn}
\sn_\delta(t)=\left\{
\begin{array}{lc}
({1}/{\sqrt{\delta}}){\sin(\sqrt{\delta}t)}, & \text{ ~if~ }\delta>0,\\
t, & \text{ ~if~ }\delta=0,\\
({1}/{\sqrt{\lvert\delta\rvert}}){\sinh(\sqrt{\lvert\delta\rvert}t)}, & \text{ ~if~ }\delta<0.
\end{array}
\right.
\end{equation}
Then for any $0<r<\pi/\sqrt{\delta}$ we have the following relations for the volumes of a geodesic sphere  and a geodesic ball of radii $r$ in a simply connected $m$-dimensional space of constant sectional curvature $\delta$:
\begin{equation}
\label{avs}
A_\delta(r)=m\omega_m\sn_\delta^{m-1}(r),\qquad V_\delta(r)=m\omega_m\int\limits_0^r\sn_\delta^{m-1}(t)dt,
\end{equation}
where $\omega_m$ is the volume of a unit ball in the $m$-dimensional Euclidean space. We always assume that $\pi/\sqrt{\delta}=+\infty$ when $\delta$ is non-positive. 

Let $(t,\xi)$ be geodesic spherical coordinates around a point $p\in M$, where $t\in (0,\inj_p)$ and $\xi$ is a unit vector in $T_pM$. Let $A_p(t,\xi)$ be the density of the volume measure in these coordinates, that is
$$
A_p(t,\xi)=t^{m-1}\det(D_{t\xi}\exp_p),
$$
where $\exp_p:T_pM\to M$ is the exponential map, see~\cite{Cha}. Recall that the G\"unther-Bishop comparison theorem~\cite[Theorem~III.4.1]{Cha} says that the function
$$
t\longmapsto\frac{A_p(t,\xi)}{\sn_\delta^{m-1}(t)},\qquad\text{where }\quad 0<t<\min\left\{\inj_p(g),\frac{\pi}{\sqrt{\delta}}\right\},
$$
is non-decreasing for any unit vector $\xi\in T_pM$. The following statement is a consequence of this result, which does not seem to appear explicitly in the literature. For reader's convenience we sketch a proof below.
\begin{prop}
\label{gb}
Let $(M,g)$ be a complete $m$-dimensional Riemannian manifold whose sectional curvatures are not greater than $\delta$, where $\delta\in\mathbb R$. Then for any point $p\in M$ the function
$$
r\longmapsto\frac{1}{V_\delta(r)}\mathit{Vol}_g(B(p,r)),\qquad\text{where }\quad 0<r<\min\left\{\inj_p(g),\frac{\pi}{\sqrt{\delta}}\right\},
$$
is non-decreasing. Besides, if it equals one at some value $r$, then the ball $B(p,r)$ is isometric to a ball of radius $r$ in the space form of constant curvature $\delta$.
\end{prop}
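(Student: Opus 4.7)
The plan is to pass from the pointwise (in the direction $\xi$) monotonicity supplied by the G\"unther-Bishop theorem to the monotonicity of the integrated ratio. Since $r<\inj_p(g)$, the exponential map is a diffeomorphism on a ball of radius $r$ in $T_pM$, and geodesic polar coordinates give
$$
\mathit{Vol}_g(B(p,r))=\int_{\mathbb S^{m-1}}\int_0^rA_p(t,\xi)\,dt\,d\xi,\qquad V_\delta(r)=\int_{\mathbb S^{m-1}}\int_0^r\sn_\delta^{m-1}(t)\,dt\,d\xi,
$$
the second identity following from~\eqref{avs} and the fact that the integrand is $\xi$-independent.

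First I would establish the following elementary lemma: if $F,G\colon(0,R]\to(0,\infty)$ are continuous and $F/G$ is non-decreasing, then so is $r\mapsto(\int_0^rF)/(\int_0^rG)$. The proof is a direct differentiation, rewriting $F(r)\int_0^rG\,dt-G(r)\int_0^rF\,dt$ as
$$
\int_0^rG(r)G(t)\left[\frac{F(r)}{G(r)}-\frac{F(t)}{G(t)}\right]dt\geqslant 0.
$$
Applying this lemma with $F(t)=A_p(t,\xi)$ and $G(t)=\sn_\delta^{m-1}(t)$, and using the G\"unther-Bishop theorem recalled in the excerpt, I conclude that for each unit vector $\xi$ the ratio $\int_0^rA_p(t,\xi)\,dt\big/\int_0^r\sn_\delta^{m-1}(t)\,dt$ is non-decreasing in $r$. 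Since the denominator does not depend on $\xi$, the identity above yields
$$
\frac{\mathit{Vol}_g(B(p,r))}{V_\delta(r)}=\frac{1}{m\omega_m}\int_{\mathbb S^{m-1}}\frac{\int_0^rA_p(t,\xi)\,dt}{\int_0^r\sn_\delta^{m-1}(t)\,dt}\,d\xi,
$$
which is an average of non-decreasing functions of $r$, hence non-decreasing; this proves the monotonicity assertion.

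For the rigidity part, the standard expansions $A_p(t,\xi)=t^{m-1}(1+O(t^2))$ and $\sn_\delta^{m-1}(t)=t^{m-1}(1+O(t^2))$ show that the G\"unther-Bishop ratio tends to $1$ as $t\to 0^+$, and is therefore at least $1$ throughout its domain. Integration gives $\mathit{Vol}_g(B(p,r))/V_\delta(r)\geqslant 1$ as well; combined with the monotonicity proved above, an equality at some $r$ forces equality on the whole interval $(0,r]$, and the pointwise inequality $A_p(t,\xi)\geqslant\sn_\delta^{m-1}(t)$ then forces $A_p(t,\xi)=\sn_\delta^{m-1}(t)$ for all $\xi\in\mathbb S^{m-1}$ and all $t\in(0,r]$. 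The main obstacle is the last, non-trivial, step of upgrading this equality of radial volume densities to an actual isometry of $B(p,r)$ with the model ball: here I would invoke the equality case of the G\"unther-Bishop (Jacobi-field) comparison, which shows that when the Jacobian of $\exp_p$ attains the model value along every direction, every normal Jacobi field along every radial geodesic must coincide with the corresponding Jacobi field in the space form. This forces the metric to take the form $dt^2+\sn_\delta(t)^2\,d\Omega^2$ in geodesic polar coordinates on $B(p,r)$, giving the stated isometry.
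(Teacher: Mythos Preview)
Your proof is correct and follows essentially the same approach as the paper: both use the G\"unther--Bishop monotonicity of $A_p(t,\xi)/\sn_\delta^{m-1}(t)$ together with the elementary lemma that $f/g$ non-decreasing implies $\int_0^r f\big/\int_0^r g$ non-decreasing. The only cosmetic difference is the order of operations---the paper first integrates over $\xi$ to obtain monotonicity of $\mathit{Area}(S(p,t))/A_\delta(t)$ and then applies the ratio lemma, whereas you apply the ratio lemma direction by direction and then average over $\xi$; for the rigidity clause the paper simply cites the equality case of the standard comparison theorem, while you spell out the Jacobi-field argument, which is exactly what that citation unpacks to.
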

\begin{proof}
Integrating the function $A_p(t,\xi)/\sn_\delta^{m-1}(t)$ over unit vectors $\xi\in T_pM$, by the G\"unther-Bishop theorem we conclude that the function
$$
t\longmapsto\frac{\mathit{Area}(S(p,t))}{A_\delta(t)},\qquad\text{where }\quad 0<t<\min\left\{\inj_p(g),\frac{\pi}{\sqrt{\delta}}\right\},
$$
and $S(p,t)$ is a geodesic sphere of radius $t$, is non-decreasing. Now note that if for positive real-valued functions $f(t)$ and $g(t)$ of one variable the ratio $f/g$ is a non-decreasing function, then the ratio $\int_0^rf/\int_0^rg$ is also a non-decreasing function. Taking as $f(t)$ the function $\mathit{Area}(S(p,t))$, and as $g(t)$ the function $A_\delta(t)$, we arrive at the first statement of Proposition~\ref{gb}. The second statement -- the equality case -- follows from the equality case in the standard volume comparison theorem~\cite[Theorem~III.4.2]{Cha}. 
\end{proof}

Recall that a classical result by Berger~\cite{Be80} says that for any closed $m$-dimensional Riemannian manifold the inequality
$$
\mathit{Vol}_g(M)\geqslant (m+1)\omega_{m+1}(\inj(g)/\pi)^m,
$$
holds, and the equality occurs if and only if after rescaling $M$ is isometric to the unit round sphere. As a direct consequence of the volume comparison theorems, we also have the comparison version of this result:
\begin{equation}
\label{bec}
\mathit{Vol}_g(M)\geqslant V_\delta(\rad(g)),
\end{equation}
where $\rad(g)$ is $\min\{\inj(g),\pi/(2\sqrt{\delta})\}$, and the function $V_\delta(\cdot)$ is given by the second relation in~\eqref{avs}. This is a sharper inequality, if $\delta\leqslant 0$. One can also characterise the case of equality -- it occurs if and only if $\delta>0$ and after scaling $M$ is isometric to the unit round sphere.

For the sequel we need the following consequence of the volume comparison theorems.
\begin{corollary}
\label{gb:c}
Let $(M,g)$ be a closed $m$-dimensional Riemannian manifold whose sectional curvatures are not greater than $\delta$, where $\delta\geqslant 0$. Then for any point $p\in M$ the volume of a geodesic ball $B(p,r)$ satisfies the inequalities
\begin{equation}
\label{gb:eq2}
{2^{1-m}}\omega_mr^m\leqslant\mathit{Vol}_g(B(p,r))\leqslant 2^{m-1}\frac{\mathit{Vol}_g(M)}{\rad(g)^m}r^m,
\end{equation}
where $\rad(g)$ stands for $\min\{\inj(g),\pi/(2\sqrt{\delta})\}$, and $0<r\leqslant\rad(g)$.
\end{corollary}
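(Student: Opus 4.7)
The plan is to combine the G\"unther-Bishop-type monotonicity from Proposition~\ref{gb} with elementary two-sided estimates on the model volume $V_\delta(r)$. Note first that Proposition~\ref{gb} applies on the interval $(0,\rad(g)]$ for every $p\in M$, since $\rad(g)\leqslant\inj(g)\leqslant\inj_p(g)$ and $\rad(g)\leqslant\pi/(2\sqrt{\delta})<\pi/\sqrt{\delta}$ (with the usual convention $\pi/\sqrt{\delta}=+\infty$ when $\delta=0$).

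For the lower bound I would argue as follows. Since $\mathit{Vol}_g(B(p,r))=\omega_mr^m+o(r^m)$ and $V_\delta(r)=\omega_mr^m+o(r^m)$ as $r\to 0^+$, the ratio $\mathit{Vol}_g(B(p,r))/V_\delta(r)$ tends to $1$, and its monotonicity forces $\mathit{Vol}_g(B(p,r))\geqslant V_\delta(r)$ throughout $(0,\rad(g)]$. It remains to bound $V_\delta$ from below. The elementary inequality $\sin(x)\geqslant x/2$ on $[0,\pi/2]$, which follows from the monotonicity of $\sin(x)/x$ together with $\sin(\pi/2)/(\pi/2)=2/\pi>1/2$, gives $\sn_\delta(t)\geqslant t/2$ on $[0,\pi/(2\sqrt{\delta})]$ (and trivially when $\delta=0$). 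Integrating,
$$
V_\delta(r)\geqslant m\omega_m\left(\frac{1}{2}\right)^{m-1}\int_0^r t^{m-1}\,dt=2^{1-m}\omega_m r^m,
$$
which yields the lower bound in~\eqref{gb:eq2}.

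For the upper bound I would apply the monotonicity in the opposite direction. For $0<r\leqslant\rad(g)$, Proposition~\ref{gb} and the obvious containment $B(p,\rad(g))\subset M$ give
$$
\frac{\mathit{Vol}_g(B(p,r))}{V_\delta(r)}\leqslant\frac{\mathit{Vol}_g(B(p,\rad(g)))}{V_\delta(\rad(g))}\leqslant\frac{\mathit{Vol}_g(M)}{V_\delta(\rad(g))}.
$$
Using the trivial inequality $\sn_\delta(t)\leqslant t$ (valid for all $\delta\geqslant 0$ and all $t\geqslant 0$) one has $V_\delta(r)\leqslant\omega_m r^m$, while the lower estimate $V_\delta(\rad(g))\geqslant 2^{1-m}\omega_m\rad(g)^m$ has just been established. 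Combining these gives
$$
\mathit{Vol}_g(B(p,r))\leqslant\frac{\omega_m r^m\cdot\mathit{Vol}_g(M)}{2^{1-m}\omega_m\rad(g)^m}=2^{m-1}\frac{\mathit{Vol}_g(M)}{\rad(g)^m}r^m,
$$
as required.

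The argument presents no real obstacle; it is a routine application of Proposition~\ref{gb} together with the elementary sine estimates $x/2\leqslant\sin x\leqslant x$ on $[0,\pi/2]$. The only point worth checking is that the monotonicity interval of Proposition~\ref{gb} contains $(0,\rad(g)]$ for every $p\in M$, which is built into the definition of $\rad(g)$.
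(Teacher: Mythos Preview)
Your proof is correct and follows essentially the same route as the paper's: both arguments combine the monotonicity from Proposition~\ref{gb} (giving $1\leqslant\mathit{Vol}_g(B(p,r))/V_\delta(r)\leqslant\mathit{Vol}_g(B(p,\rad))/V_\delta(\rad)$) with the elementary two-sided bound $t/2\leqslant\sn_\delta(t)\leqslant t$ on $[0,\pi/(2\sqrt{\delta})]$ to sandwich $V_\delta(r)$ between $2^{1-m}\omega_m r^m$ and $\omega_m r^m$. Your version is slightly more explicit in justifying that the ratio tends to $1$ as $r\to 0^+$ and in invoking the containment $B(p,\rad(g))\subset M$, but the substance is identical.
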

\begin{proof}
Indeed, by Proposition~\ref{gb} we obtain
\begin{equation}
\label{gb:in}
1\leqslant\frac{\mathit{Vol}_g(B(p,r))}{V_\delta(r)}\leqslant\frac{\mathit{Vol}_g(B(p,\rad))}{V_\delta(\rad)}
\end{equation}
for any $0<r\leqslant\rad(g)$, where $\rad=\rad(g)$. When $\delta=0$, we have $V_\delta(r)=\omega_mr^m$, and the statement follows directly from~\eqref{gb:in}. Now suppose that $\delta>0$. Then from the inequalities
$$
\frac{1}{2}t\leqslant\sn_\delta(t)\leqslant t\qquad\quad\text{for any }\quad 0\leqslant t\leqslant\frac{\pi}{2\sqrt{\delta}},
$$
we obtain
$$
\frac{1}{2^{m-1}}\omega_mr^m\leqslant V_\delta(r)\leqslant\omega_m r^m\qquad\text{for any}\quad 0\leqslant r\leqslant\frac{\pi}{2\sqrt{\delta}}.
$$
Combining the last relations with the inequalities in~\eqref{gb:in}, we arrive at the statement of the corollary.
\end{proof}

\subsection{Monotonicity theorems for minimal submanifolds}
Let $\Sigma^n$ be an $n$-dimensional immersed minimal submanifold in a Riemannian manifold $(M,g)$; we assume that the sectional curvatures of $M$ are not greater than $\delta$, where $\delta\in\mathbb R$. As above, we use the notation
\begin{equation}
\label{avs:m}
A^{n-1}_\delta(r)=n\omega_n\sn_\delta^{n-1}(r),\qquad V^n_\delta(r)=n\omega_n\int\limits_0^r\sn_\delta^{n-1}(t)dt,
\end{equation}
where $0<r<\pi/\sqrt{\delta}$, for the volumes of a geodesic sphere and a geodesic ball of radii $r$ in an $n$-dimensional space form of curvature $\delta$.

The following {\em volume monotonicity} theorem can be viewed as an extension of Proposition~\ref{gb} to minimal submanifolds. When $\delta\leqslant 0$, it is due to Anderson~\cite{An82}. For $\delta>0$ the statement appears to be new.
\begin{prop}
\label{gbm}
Let $(M,g)$ be a complete Riemannian manifold whose sectional curvatures are not greater than $\delta$, where $\delta\in\mathbb R$, and let $\Sigma^n$ be an $n$-dimensional properly immersed minimal submanifold in $M$. Then for any point $p\in M$ the following holds:
\begin{itemize}
\item [(i)] if $\delta\leqslant 0$, the function
$$
r\longmapsto \frac{1}{V^n_\delta(r)}\mathit{Vol}_g(B(p,r)\cap\Sigma^n),\qquad\text{where}\quad 0<r<\inj_p(g),
$$
is non-decreasing;
\item [(ii)] if $\delta> 0$, the function
$$
r\longmapsto \frac{1}{A^n_\delta(r)}\mathit{Vol}_g(B(p,r)\cap\Sigma^n),\qquad\text{where}\quad 0<r<\min\left\{\inj_p(g),\frac{\pi}{\sqrt{\delta}}\right\},
$$
is non-decreasing.
\end{itemize}
\end{prop}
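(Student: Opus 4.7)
The plan is to derive both parts from the Hessian comparison theorem for the distance function $\rho=\dist_g(\cdot,p)$ combined with the vanishing of the mean curvature of $\Sigma$. Under the hypothesis that the sectional curvatures of~$M$ are bounded above by~$\delta$, one has
\[
\Hess^M \rho \;\geq\; \frac{\sn_\delta'(\rho)}{\sn_\delta(\rho)}\,\bigl(g - d\rho\otimes d\rho\bigr)
\]
as symmetric bilinear forms, on the complement of $\{p\}\cup\cut(p)$ within the range $\rho<\pi/\sqrt{\delta}$. Taking the trace along a local orthonormal frame for $T\Sigma$ and using that $\Sigma$ is minimal (so that $\Delta_\Sigma f$ equals the trace over $T\Sigma$ of $\Hess^M f$ for the restriction of an ambient function), for any smooth radial test function $F$ with $F'\geq 0$ I obtain
\[
\Delta_\Sigma F(\rho) \;\geq\; F''(\rho)\,|\nabla^\Sigma\rho|^2 \;+\; F'(\rho)\,\frac{\sn_\delta'(\rho)}{\sn_\delta(\rho)}\bigl(n-|\nabla^\Sigma\rho|^2\bigr).
\]
Integrating this over $B(p,r)\cap\Sigma$, invoking the divergence theorem on $\Sigma$ and the coarea formula yields a scalar inequality for $V(r):=\mathit{Vol}_g(B(p,r)\cap\Sigma)$, which I manipulate differently in the two cases.

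For part~(ii) I would choose $F'(\rho)=\sn_\delta(\rho)$. The identity $F''(\rho) = F'(\rho)\,\sn_\delta'(\rho)/\sn_\delta(\rho)$ eliminates the coefficient of $|\nabla^\Sigma\rho|^2$ and produces the clean pointwise bound $\Delta_\Sigma F(\rho)\geq n\,\sn_\delta'(\rho)$. Integrating, using $|\nabla^\Sigma\rho|\leq 1$ to majorise the boundary flux by $\sn_\delta(r)\,V'(r)$, and integrating by parts once on the right with $\sn_\delta''=-\delta\sn_\delta$, I arrive at
\[
\sn_\delta(r)\,V'(r) \;\geq\; n\,\sn_\delta'(r)\,V(r) \;+\; n\delta\int_0^r \sn_\delta(t)\,V(t)\,dt.
\]
For $\delta>0$ the correction term is nonnegative, hence $\sn_\delta(r) V'(r) \geq n\sn_\delta'(r) V(r) \geq (n-1)\sn_\delta'(r) V(r)$ wherever $\sn_\delta'(r)\geq 0$; this is the logarithmic-derivative form of the monotonicity of $V/\sn_\delta^{n-1}$, hence of $V/A_\delta^{n-1}$. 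On the remaining range $\pi/(2\sqrt{\delta})\leq r<\pi/\sqrt{\delta}$ the right-hand side becomes nonpositive while the left-hand side remains nonnegative, and the desired inequality is automatic.

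For part~(i) I would follow Anderson's~\cite{An82} approach, using instead the test function $F'(\rho) = \sn_\delta^{n-1}(\rho)$, so that $F$ is proportional to the model ball volume $V_\delta^n(\rho)$. Because $\sn_\delta''=-\delta\sn_\delta\geq 0$ when $\delta\leq 0$, the integration-by-parts corrections analogous to those appearing in part~(ii) now carry the favourable sign, and a more delicate manipulation of the resulting integral inequality upgrades the conclusion to the sharper differential inequality $V'(r)V_\delta^n(r)\geq V(r)(V_\delta^n)'(r)$, i.e., monotonicity of $V/V_\delta^n$. The principal obstacle I anticipate is precisely this sign dichotomy: for $\delta>0$ the integration-by-parts correction points the wrong way, which is why one can only recover the weaker ratio $V/A_\delta^{n-1}$ in~(ii) and not the stronger $V/V_\delta^n$ of~(i). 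A secondary issue---the degeneration of the Hessian comparison past $\rho=\pi/(2\sqrt{\delta})$ where $\sn_\delta'$ changes sign---fortunately resolves itself since the inequality in question becomes trivial in that range.
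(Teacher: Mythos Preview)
Your argument for part~(ii) is correct and in fact cleaner than the paper's. With $F'=\sn_\delta$ the coefficient of $|\nabla^\Sigma\rho|^2$ disappears and you get $\Delta_\Sigma F(\rho)\geq n\sn_\delta'(\rho)$ at once; after the integration by parts the correction $n\delta\int_0^r\sn_\delta V$ is nonnegative when $\delta>0$, giving $\sn_\delta(r)V'(r)\geq n\sn_\delta'(r)V(r)$, i.e.\ $(V/\sn_\delta^{n})'\geq 0$. The paper instead uses a single test function $f'=V_\delta^n/A_\delta^{n-1}$ for both cases, which for $\delta>0$ forces an extra monotonicity lemma for the defect $\epsilon_\delta(r)=1-n\sn_\delta'\sn_\delta^{-n}\int_0^r\sn_\delta^{n-1}$; your route bypasses that. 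One comment: do not weaken to $\sn_\delta^{n-1}$ at the end. In the paper's notation the superscript on $A_\delta$ records the dimension of the sphere, so $A_\delta^{n}(r)$ is proportional to $\sn_\delta^{n}(r)$; the statement is exactly the monotonicity of $V/\sn_\delta^{n}$ that you had before you passed to $n-1$.

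Part~(i) has a genuine gap. With your choice $F'=\sn_\delta^{n-1}$ the coefficient of $|\nabla^\Sigma\rho|^2$ does \emph{not} cancel: one obtains
\[
\Delta_\Sigma F(\rho)\ \geq\ \sn_\delta^{n-2}(\rho)\,\sn_\delta'(\rho)\bigl[(n-2)|\nabla^\Sigma\rho|^2+n\bigr]\ \geq\ n\,\sn_\delta^{n-2}(\rho)\,\sn_\delta'(\rho),
\]
and after integrating and using the coarea bound this gives only
\[
\sn_\delta^{n-1}(r)\,V'(r)\ \geq\ \frac{n}{n-1}\int_0^r(\sn_\delta^{n-1})'(t)\,dV(t).
\]
This is not the inequality $V'(r)\int_0^r\sn_\delta^{n-1}\geq V(r)\sn_\delta^{n-1}(r)$ you need for monotonicity of $V/V_\delta^n$, and I do not see a Gronwall-type manipulation that closes the gap (the integration-by-parts correction $(\sn_\delta^{n-1})''\geq 0$ for $\delta\leq 0$ gives an \emph{upper} bound on the right-hand side, which goes the wrong way). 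The paper's device is to take instead $f'(r)=V_\delta^n(r)/A_\delta^{n-1}(r)$, characterised by the ODE $f''+(n-1)(\sn_\delta'/\sn_\delta)f'=1$. With this choice the Laplacian estimate collapses to
\[
\Delta_\Sigma(f\circ\rho)\ \geq\ 1+(1-|\nabla^\Sigma\rho|^2)\Bigl(n\frac{\sn_\delta'}{\sn_\delta^{n}}\int_0^\rho\sn_\delta^{n-1}-1\Bigr),
\]
and the elementary inequality $\sn_\delta^{n}\leq n\sn_\delta'\int_0^r\sn_\delta^{n-1}$, valid precisely when $\delta\leq 0$, makes the bracket nonnegative, yielding $\Delta_\Sigma(f\circ\rho)\geq 1$ pointwise. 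Integrating then gives $V(r)\leq f'(r)V'(r)$ directly, which is the logarithmic-derivative form of $(V/V_\delta^n)'\geq 0$. So the missing idea is the specific test function solving that ODE; neither $\sn_\delta$ nor $\sn_\delta^{n-1}$ does the job for~(i).
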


\begin{remark}
Under the hypotheses of Proposition~\ref{gbm}, consider the case when $\delta>0$. 
To our knowledge, the answer to the following question, also implicitly raised in~\cite{GS87}, is unknown: is the function
$$
r\longmapsto \frac{1}{V^n_\delta(r)}\mathit{Vol}_g(B(p,r)\cap\Sigma^n),\qquad\text{where}\quad 0<r<\min\left\{\inj_p(g),\frac{\pi}{\sqrt{\delta}}\right\},
$$
non-decreasing?
\end{remark}
Proposition~\ref{gbm} immediately implies comparison inequalities for the volumes of extrinsic balls $B(p,r)\cap\Sigma^n$; see also~\cite{CLY84,Ma86}, where these inequalities are obtained from the heat kernel comparison theorems. In particular, we obtain volume bounds for any immersed minimal submanifold $\Sigma^n\subset M$; for example, if $\delta\leqslant 0$, then
\begin{equation}
\label{becm}
\mathit{Vol}_g(\Sigma^n)\geqslant V^n_\delta(\rad(g)).
\end{equation}
By the results in~\cite{CLY84,Ma86} the above inequality continues to hold for the case $\delta>0$ also, while Proposition~\ref{gbm} gives a weaker result in this case. Inequality~\eqref{becm} can be viewed as a version of comparison inequality~\eqref{bec}, inherited by minimal submanifolds. Proposition~\ref{gbm} also implies the following version of Corollary~\ref{gb:c}.
\begin{corollary}
\label{gbm:c}
Let $(M,g)$ be a complete Riemannian manifold whose sectional curvatures are not greater than $\delta$, where $\delta\geqslant 0$, and let $\Sigma^n$ be an $n$-dimensional immersed closed minimal submanifold in $M$. Then for any point $p\in\Sigma^n$ the volume of an extrinsic ball $B(p,r)\cap\Sigma^n$ in $\Sigma^n$ satisfies the inequalities
\begin{equation}
\label{gbm:eq2}
{2^{-n}}n\omega_nr^n\leqslant\mathit{Vol}_g(B(p,r)\cap\Sigma^n)\leqslant 2^{n}\frac{\mathit{Vol}_g(\Sigma^n)}{\rad(g)^n}r^n,
\end{equation}
where $\rad(g)$ stands for $\min\{\inj(g),\pi/(2\sqrt{\delta})\}$, and $0<r\leqslant\rad(g)$.
\end{corollary}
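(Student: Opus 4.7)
The proof will parallel that of Corollary~\ref{gb:c}, substituting Proposition~\ref{gbm} for Proposition~\ref{gb} and the extrinsic volume $\mathit{Vol}_g(B(p,r)\cap\Sigma^n)$ for the ambient ball volume. My first step would be to assemble two ratio bounds: the lower one, $\mathit{Vol}_g(B(p,r)\cap\Sigma^n)\geq V^n_\delta(r)$, which for $\delta\leq 0$ comes from letting the monotone ratio in Proposition~\ref{gbm}(i) tend to $1$ as $r\to 0^+$ (the density of the smooth submanifold at an interior point equals one), and for $\delta>0$ is supplied by the heat-kernel comparison of~\cite{CLY84,Ma86} mentioned in the paper; and the upper one, obtained by evaluating the monotone quotient from Proposition~\ref{gbm} at $r=\rad(g)$ and using $\mathit{Vol}_g(B(p,\rad(g))\cap\Sigma^n)\leq \mathit{Vol}_g(\Sigma^n)$.

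Given these, the derivation of~\eqref{gbm:eq2} reduces to quantifying the comparison functions $V^n_\delta(r)$ and $A^n_\delta(r)$ on the range $0<r\leq\rad(g)$ in terms of $r^n$. I would use the sandwich
\begin{equation*}
\frac{t}{2}\leq\sn_\delta(t)\leq t,\qquad 0\leq t\leq\frac{\pi}{2\sqrt{\delta}},
\end{equation*}
recorded already in the proof of Corollary~\ref{gb:c} (supplemented, if needed, by its Jordan-type refinement $\sn_\delta(t)\geq 2t/\pi$ from concavity of $\sn_\delta$ on this interval). The upper estimate $\sn_\delta(t)\leq t$ converts the ratio of comparison functions at $r$ and $\rad(g)$ into at most $2^n(r/\rad(g))^n$, which combined with $\mathit{Vol}_g(B(p,\rad(g))\cap\Sigma^n)\leq\mathit{Vol}_g(\Sigma^n)$ yields the upper half of~\eqref{gbm:eq2}; integrating $\sn_\delta(t)\geq 2t/\pi$ in the definition of $V^n_\delta$ produces $V^n_\delta(r)\geq 2^{-n}n\omega_n r^n$, which is the lower half. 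The case $\delta=0$ is immediate from $V^n_0(r)=\omega_n r^n$ and the elementary inequality $n/2^n\leq 1$.

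The main obstacle is purely organisational: in the case $\delta>0$ the monotone comparison function $A^n_\delta$ supplied by Proposition~\ref{gbm}(ii) is not the same as the $V^n_\delta$ used in the lower bound, so one has to feed separate ingredients into the two halves of the argument -- the Anderson-type monotonicity of Proposition~\ref{gbm} for the upper bound, but the Cheng--Li--Yau/Markvorsen volume comparison for the lower bound. Once this split is recognised, the remainder is a mechanical combination of the monotonicity with the pointwise estimates on $\sn_\delta$.
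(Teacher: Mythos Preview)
Your proposal is correct and matches the paper's approach: the paper simply records that the proof ``follows the line of the argument in the proof of Corollary~\ref{gb:c}'', meaning one replaces Proposition~\ref{gb} by Proposition~\ref{gbm}, the ambient ball volume by the extrinsic one, and then applies the same $\sn_\delta$ sandwich $t/2\leqslant\sn_\delta(t)\leqslant t$. Your explicit remark that for $\delta>0$ the lower comparison $\mathit{Vol}_g(B(p,r)\cap\Sigma^n)\geqslant V^n_\delta(r)$ must be drawn from~\cite{CLY84,Ma86} rather than from Proposition~\ref{gbm}(ii) is exactly in line with the paper's own comment immediately preceding the corollary.
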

The proof of Corollary~\ref{gbm:c} follows the line of the argument in the proof of Corollary~\ref{gb:c}. The rest of the section is devoted to the proof of Proposition~\ref{gbm}. Our argument borrows some observations from the exposition in~\cite{LMMV}, where the authors describe other monotonic quantities for the case $\delta\leqslant 0$. Let us also mention that for the case $\delta>0$ a monotonic quantity different from the one in Proposition~\ref{gbm} is used in~\cite{GS87}.

We start with a number of auxiliary lemmas. The first statement underlines the difference in the cases $\delta\leqslant 0$ and $\delta>0$. Its proof is elementary, and therefore, is omitted. 
\begin{lemma}
\label{sn:rel}
For any positive integer $n$ the function $\sn_\delta(r)$, defined by~\eqref{sn}, satisfies the following relations:
\begin{itemize}
\item [(i)] if $\delta\leqslant 0$, then
$$
(n-1)\sn'_\delta(r)\int\limits_0^r\sn_\delta^{n-1}(t)dt\leqslant\sn_\delta^n(r)\leqslant n\sn'_\delta(r)\int\limits_0^r\sn_\delta^{n-1}(t)dt
$$
for any $r>0$;
\item [(ii)] if $\delta>0$, then
$$
n\sn'_\delta(r)\int\limits_0^r\sn_\delta^{n-1}(t)dt\leqslant\sn_\delta^n(r)
$$
for any $0<r<\pi/\sqrt{\delta}$.
\end{itemize}
\end{lemma}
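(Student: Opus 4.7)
The plan is to base everything on the identity $(\sn_\delta^n)'(t) = n\sn_\delta^{n-1}(t)\sn'_\delta(t)$, which upon integration gives
$$\sn_\delta^n(r) = n\int_0^r \sn_\delta^{n-1}(t)\sn'_\delta(t)\,dt,$$
together with the Pythagorean-type identity $(\sn'_\delta)^2 + \delta\sn_\delta^2 \equiv 1$, which follows by direct inspection of the three cases in~\eqref{sn} and holds for every sign of $\delta$.

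For the upper bound in~(i) and for~(ii), the argument is immediate from the monotonicity of $\sn'_\delta$. When $\delta\leqslant 0$, the function $\sn'_\delta$ equals $\cosh(\sqrt{|\delta|}t)$ (or $1$ when $\delta=0$) and is non-decreasing, so $\sn'_\delta(t)\leqslant\sn'_\delta(r)$ on $[0,r]$; inserting this in the integral identity yields $\sn_\delta^n(r)\leqslant n\sn'_\delta(r)\int_0^r\sn_\delta^{n-1}(t)\,dt$. When $\delta>0$, the function $\sn'_\delta(t)=\cos(\sqrt{\delta}t)$ is non-increasing on $(0,\pi/\sqrt{\delta})$, so $\sn'_\delta(t)\geqslant\sn'_\delta(r)$ on $[0,r]$; the same identity then yields~(ii), and if $\sn'_\delta(r)<0$ the inequality is trivial because the right-hand side is negative while the left-hand side is positive.

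For the lower bound in~(i) my plan is to introduce the auxiliary function
$$\phi(t)=\frac{\sn_\delta^n(t)}{\sn'_\delta(t)},$$
which is well-defined on $[0,r]$ for $\delta\leqslant 0$ since in this range $\sn'_\delta\geqslant 1$. Using $\sn''_\delta=-\delta\sn_\delta$, one computes
$$\phi'(t)=n\sn_\delta^{n-1}(t)+\delta\,\frac{\sn_\delta^{n+1}(t)}{\sn'_\delta(t)^2}.$$
The Pythagorean identity gives $|\delta|\sn_\delta^2\leqslant(\sn'_\delta)^2$, so the correction term satisfies $|\delta|\sn_\delta^{n+1}/\sn'_\delta{}^2\leqslant\sn_\delta^{n-1}$, and therefore $\phi'(t)\geqslant(n-1)\sn_\delta^{n-1}(t)$. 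Integrating from $0$ to $r$ with $\phi(0)=0$ and multiplying through by $\sn'_\delta(r)>0$ then produces the desired bound $(n-1)\sn'_\delta(r)\int_0^r\sn_\delta^{n-1}(t)\,dt\leqslant\sn_\delta^n(r)$.

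The main obstacle, in my view, lies in identifying the correct auxiliary quantity for the lower bound. The ratio $\sn_\delta^n/\sn'_\delta$ is the natural choice because its derivative decomposes into the dominant term $n\sn_\delta^{n-1}$ plus a sign-definite remainder, and the Pythagorean identity is precisely what allows the $\delta$-correction to be absorbed at the cost of a single factor of $\sn_\delta^{n-1}$, which explains why the sharp constant drops from $n$ to $n-1$ rather than collapsing further.
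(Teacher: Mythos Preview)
Your proof is correct. The paper omits the proof of this lemma entirely, remarking only that it is elementary; your argument via the integral identity $\sn_\delta^n(r)=n\int_0^r\sn_\delta^{n-1}(t)\sn'_\delta(t)\,dt$ together with the monotonicity of $\sn'_\delta$ for the easy inequalities, and the auxiliary ratio $\phi=\sn_\delta^n/\sn'_\delta$ combined with the Pythagorean identity $(\sn'_\delta)^2+\delta\sn_\delta^2=1$ for the lower bound in~(i), is clean and complete.
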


For the sequel we need the following consequence of Lemma~\ref{sn:rel}.
\begin{corollary}
\label{extra:mon}
For any positive integer $n$ the function $\alpha_\delta(r)=V_\delta^n(r)/A_\delta^{n-1}(r)$ is non-decreasing for any $\delta\in\mathbb R$. Moreover, it is concave for $\delta\leqslant 0$ and is convex for $\delta>0$, where $0<r<\pi/\sqrt{\delta}$.
\end{corollary}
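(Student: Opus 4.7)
The plan is to express $\alpha_\delta$ in normalised form and derive both claims by studying one auxiliary function. Set $s(r):=\sn_\delta(r)$ and $F(r):=\int_0^r s(t)^{n-1}\,dt$, so that $\alpha_\delta(r)=F(r)/s(r)^{n-1}$. A direct differentiation, using $F'=s^{n-1}$, gives
\[
\alpha'_\delta(r)=1-(n-1)\frac{s'(r)F(r)}{s(r)^n}.
\]
Hence \emph{monotonicity} is equivalent to the inequality $(n-1)s'F\leqslant s^n$. For $\delta\leqslant 0$ this is the left-hand side of Lemma~\ref{sn:rel}(i); for $\delta>0$ it follows from Lemma~\ref{sn:rel}(ii) whenever $s'\geqslant 0$ and is automatic on the subinterval where $s'\leqslant 0$, since then the left-hand side is non-positive. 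The case $\delta=0$ reduces to $\alpha_0(r)=r/n$, which is affine.

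For the second claim I first collect $\alpha''_\delta$ into the sign of a single function. Differentiating $\alpha'_\delta$ once more, using the ODE $s''=-\delta s$ together with the identity $(s')^2+\delta s^2=1$ (valid for every $\delta$, and exploited via the rewriting $n(s')^2+\delta s^2=(n-1)(s')^2+1$), one obtains after multiplication by $s^{n+1}$ the representation
\[
\alpha''_\delta(r)=-\frac{n-1}{s(r)^{n+1}}H(r),\qquad H(r):=s'(r)s(r)^n-F(r)\bigl[1+(n-1)s'(r)^2\bigr].
\]
Since $H(0)=0$, it suffices to control the sign of $H'$. A second application of $s''=-\delta s$ and of the Pythagorean-type identity, now in the form $(s')^2-1=-\delta s^2$, produces after cancellation the key collapse
\[
H'(r)=-2\delta\, s(r)\bigl[s(r)^n-(n-1)F(r)s'(r)\bigr].
\]
The bracketed factor is non-negative by the very inequality used for monotonicity. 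Consequently $H'$ has the sign of $-\delta$, which by integration from $H(0)=0$ gives $H\geqslant 0$ for $\delta\leqslant 0$ and $H\leqslant 0$ for $\delta>0$; the concavity and convexity statements follow immediately.

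The main difficulty is algebraic rather than conceptual: unless both relations $s''=-\delta s$ and $(s')^2+\delta s^2=1$ are applied at the right moments, the expression for $H'$ does not collapse to a product of sign-definite factors, and the argument threatens to fragment into three separate computations depending on the sign of $\delta$. The unifying observation is that the single ``universal'' inequality $(n-1)s'F\leqslant s^n$ controls both the sign of $\alpha'_\delta$ and the bracket appearing in $H'$, which is what reduces the whole statement to a one-line sign discussion.
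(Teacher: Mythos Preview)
Your proof is correct and follows essentially the same route as the paper: both compute $\alpha'_\delta$, invoke Lemma~\ref{sn:rel} for monotonicity, then study the sign of $\alpha''_\delta$ through an auxiliary function (your $H$ coincides with the paper's $\omega$ when $\delta=1$) whose derivative is controlled again by Lemma~\ref{sn:rel}. The one difference is packaging: the paper normalises to $\delta\in\{-1,0,1\}$ and computes case by case, whereas you keep $\delta$ general and use the relations $s''=-\delta s$ and $(s')^2+\delta s^2=1$ to collapse $H'$ into the product $-2\delta\,s\,[s^n-(n-1)Fs']$, which handles all signs of $\delta$ simultaneously; this is tidier but not a different argument.
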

\begin{proof}
Differentiating $\alpha_\delta(r)$, we obtain
\begin{equation}
\label{ab}
\alpha'_\delta(r)=1-(n-1)\frac{\sn'_\delta}{\sn^n_\delta}(r)\int\limits_0^r\sn^{n-1}_\delta(t)dt,
\end{equation}
and by Lemma~\ref{sn:rel}, conclude that $\alpha'_\delta(r)\geqslant 0$. To prove the second statement of the corollary it is sufficient to consider the cases when $\delta$ equals $-1$, $0$, and $1$. We give an argument for the case $\delta=1$; the others are considered similarly. A direct computation gives
$$
\alpha''_1(r)=-\frac{n-1}{(\sin r)^{n+1}}\left((\sin r)^{n}\cos r-(n-1)(\cos r)^2\int_0^r(\sin t)^{n-1}dt-\int_0^r(\sin t)^{n-1}dt\right).
$$
Denote by $\omega(r)$ the expression in the brackets on the right hand-side; we claim that it is non-positive,  $\omega(r)\leqslant 0$. Computing its derivative, we obtain
$$
\omega'(r)=2\sin r\left(-(\sin r)^n+(n-1)\cos r\int_0^r(\sin t)^{n-1}dt\right)\leqslant 0
$$
for $0<r<\pi$, where in the last inequality we used Lemma~\ref{sn:rel}. Since $\omega(0)=0$, we conclude that $\omega(r)$ is indeed non-positive, and hence, the function $\alpha''_1(r)$ is non-negative on the interval $(0,\pi)$.
\end{proof}

We proceed with the following consequence of the Hessian comparison theorem.
\begin{lemma}
\label{hess}
Under the hypotheses of Proposition~\ref{gbm}, let $r(x)$ be a distance function $\dist(p,x)$ to a point $p\in M$ restricted to a minimal submanifold $\Sigma^n$. Then the relation
$$
\Delta_{\Sigma^n} r(x)\geqslant\frac{\sn'_\delta}{\sn_\delta}(r(x))(n-\abs{\nabla r(x)}^2)
$$
holds for any $x\in\Sigma^n$ such that $0<r(x)<\min\{\inj_p(g),\pi/\sqrt{\delta}\}$.
\end{lemma}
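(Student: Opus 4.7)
The strategy is to combine the Hessian comparison theorem for manifolds with an upper sectional curvature bound with the classical formula expressing the intrinsic Laplacian of the restriction of a function to a submanifold as the trace of the ambient Hessian plus a second-fundamental-form correction. Minimality of $\Sigma^n$ then kills the correction term, leaving only the Hessian trace, to which the comparison estimate can be applied directly.

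First I would invoke the Hessian comparison theorem: on the region where $0<r(x)<\min\{\inj_p(g),\pi/\sqrt{\delta}\}$ the distance function $r$ is smooth at $x$, the ambient gradient $\nabla^M r$ has unit length, and the upper bound $K\leqslant\delta$ yields
$$
\Hess_M r(X,X)\geqslant \frac{\sn'_\delta}{\sn_\delta}(r(x))\left(\abs{X}^2-\langle X,\nabla^M r\rangle^2\right)
$$
for every $X\in T_xM$. The radial identity $\Hess_M r(\nabla^M r,\cdot)=0$, following from $\abs{\nabla^M r}^2\equiv 1$ along radial geodesics, handles the radial component, while for vectors orthogonal to $\nabla^M r$ the bound is the standard Rauch/Jacobi-field comparison against the model space of constant curvature $\delta$.

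Next, I would pick a local orthonormal frame $\{e_1,\dots,e_n\}$ of $T_x\Sigma^n$ and use the standard submanifold identity
$$
\Delta_{\Sigma^n}r(x)=\sum_{i=1}^n \Hess_M r(e_i,e_i)+\left\langle \sum_{i=1}^n II(e_i,e_i),\,\nabla^M r\right\rangle,
$$
obtained by writing $\nabla^M_{e_i}e_i=\nabla^{\Sigma^n}_{e_i}e_i+II(e_i,e_i)$ and tracing. Since $\Sigma^n$ is minimal, the mean curvature vector $\sum_i II(e_i,e_i)$ vanishes and the second term drops out. Summing the Hessian comparison inequality termwise then gives
$$
\Delta_{\Sigma^n}r(x)\geqslant \frac{\sn'_\delta}{\sn_\delta}(r(x))\sum_{i=1}^n\left(1-\langle e_i,\nabla^M r\rangle^2\right)=\frac{\sn'_\delta}{\sn_\delta}(r(x))\left(n-\abs{\nabla r(x)}^2\right),
$$
where the last equality uses that the intrinsic gradient of the restriction $r|_{\Sigma^n}$ is the tangential projection of $\nabla^M r$, whose squared norm equals $\sum_i\langle e_i,\nabla^M r\rangle^2$.

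Both ingredients are classical, so I do not expect any serious obstacle; the only point to check is the applicability of the Hessian comparison theorem, which is precisely why the interval $r(x)<\min\{\inj_p(g),\pi/\sqrt{\delta}\}$ appears — it guarantees smoothness of $r$ at $x$ together with $\sn_\delta(r(x))>0$, so that the ratio $\sn'_\delta/\sn_\delta(r(x))$ is finite and the comparison inequality has its usual form.
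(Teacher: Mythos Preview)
Your proposal is correct and follows essentially the same approach as the paper: both combine the Hessian comparison theorem for the ambient distance function with the submanifold identity $\Delta_{\Sigma^n}r=\sum_i\Hess_M r(e_i,e_i)+\langle H,\nabla^M r\rangle$, then use minimality to drop the mean-curvature term and identify $\sum_i\langle e_i,\nabla^M r\rangle^2$ with $\abs{\nabla r}^2$. The ingredients, their order, and the final computation match the paper's argument.
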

\begin{proof}
Let $\phi$ be a smooth function on $M$, and $\varphi$ be its restriction to $\Sigma^n$. Note that $\grad_x\varphi$ is the tangential (lying in $T_x\Sigma^n$) component of $\grad_x\phi$, and a straightforward calculation shows that
$$
\Hess_x\phi(X,X)=\Hess_x\varphi(X,X)-\langle\grad_x\phi,B_x(X,X)\rangle
$$
for any vector $X\in T_x\Sigma^n$, where $B_x(\cdot,\cdot)$ is the second fundamental form of $\Sigma_n$. As a consequence of this relation, we obtain the following identity for an arbitrary submanifold $\Sigma^n\subset M$:
$$
\Delta_{\Sigma^n}\varphi(x)=\sum_{i=1}^n\Hess_x\phi(X_i,X_i)+\langle\grad_x\phi,H_x\rangle,
$$
where $H_x$ is the mean curvature vector of $\Sigma^n$ at $x$, and $\{X_i\}$ is an orthonormal basis of $T_x\Sigma^n$. Recall that the Hessian comparison theorem, see~\cite{Pet}, says that
$$
\Hess_xr(V,W)\geqslant\frac{\sn'_\delta}{\sn_\delta}(r(x))\left(\langle V,W\rangle-\langle(\partial/\partial r),V\rangle\langle(\partial/\partial r),W\rangle\right)
$$
for any vectors $V$, $W\in T_xM$, where $0<r(x)<\min\{\inj_x(g),\pi/\sqrt{\delta}\}$. Now combining the last two relations together with the assumption that $\Sigma^n$ is minimal, we arrive at the statement of the lemma.
\end{proof}

Finally we need the following well-known application of the co-area formula. We omit its proof, and refer to~\cite{LMMV,Pa99} where related details can be found.
\begin{lemma}
\label{coarea}
Under the hypotheses of Proposition~\ref{gbm}, the function $V(r)=\mathit{Vol}_g(B(p,r)\cap\Sigma^n)$ is differentiable almost everywhere, and $V'(r)\geqslant\mathit{Vol}_g(\partial B(p,r)\cap\Sigma^n)$.
\end{lemma}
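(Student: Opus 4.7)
The strategy is to derive both assertions from a single application of the Federer coarea formula to the restriction of the ambient distance function to $\Sigma^n$. First, I would set $\rho(x)=\dist_M(p,\iota(x))$, where $\iota\colon\Sigma^n\to M$ denotes the given immersion. Since $\dist_M(p,\cdot)$ is $1$-Lipschitz on $M$ and $\iota$ is a local isometry, $\rho$ is $1$-Lipschitz on $\Sigma^n$ and hence, by Rademacher's theorem, differentiable almost everywhere. Moreover, on the preimage of the punctured ball $B(p,\inj_p(g))\setminus\{p\}$ the ambient distance is smooth, so $\rho$ is smooth there, with gradient $\grad^{\Sigma^n}\rho$ equal to the tangential component of the ambient unit radial field; in particular $\abs{\grad^{\Sigma^n}\rho}\leqslant 1$, with strict inequality precisely where the radial direction fails to be tangent to $\Sigma^n$.

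Applying the coarea formula to $\rho$ then expresses the volume of the extrinsic ball as
$$
V(r)=\vol_g\{x\in\Sigma^n:\rho(x)\leqslant r\}=\int_0^r\Bigl(\int_{\rho^{-1}(t)}\frac{d\mathcal{H}^{n-1}}{\abs{\grad^{\Sigma^n}\rho}}\Bigr)\,dt.
$$
The inner integral is measurable and locally integrable in $t$, so $V$ is absolutely continuous on compact subintervals of $(0,\inj_p(g))$; this yields the claimed almost-everywhere differentiability. Combining the Lebesgue differentiation theorem with the pointwise bound $\abs{\grad^{\Sigma^n}\rho}\leqslant 1$ then gives
$$
V'(r)=\int_{\rho^{-1}(r)}\frac{d\mathcal{H}^{n-1}}{\abs{\grad^{\Sigma^n}\rho}}\geqslant\mathcal{H}^{n-1}\bigl(\rho^{-1}(r)\bigr)=\vol_g(\partial B(p,r)\cap\Sigma^n)
$$
at every point of differentiability, which is the sought inequality.

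There is no real obstacle in this argument -- the lemma is essentially a measure-theoretic consequence of the coarea formula -- but two minor bookkeeping points deserve comment. First, $\rho$ fails to be smooth at the preimages of $p$, yet these form an isolated, hence $\mathcal{H}^n$-null, set that contributes nothing to either the coarea identity or its differentiation. Second, possible self-intersections of $\iota$ are harmless, because the coarea formula is intrinsic to $\Sigma^n$ and every quantity involved (volume, level-set measure, gradient norm) is computed on $\Sigma^n$ rather than on its image, so multiplicities are automatically respected. Neither the minimality of $\Sigma^n$ nor the upper curvature bound plays any role, in agreement with the purely measure-theoretic character of the statement.
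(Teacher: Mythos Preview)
Your coarea argument is correct and is exactly the standard route the paper defers to by citing \cite{LMMV,Pa99} rather than supplying its own proof. One small caveat: the displayed identity $V(r)=\int_0^r\bigl(\int_{\rho^{-1}(t)}\abs{\grad^{\Sigma^n}\rho}^{-1}\,d\mathcal H^{n-1}\bigr)dt$ and the ensuing claim that $V$ is absolutely continuous presuppose that the critical set $\{\abs{\grad^{\Sigma^n}\rho}=0\}$ is $\mathcal H^n$-null, which need not hold for an arbitrary immersion (nothing forbids an open piece of $\Sigma^n$ from lying in a single distance sphere). This does not affect the conclusion, however: $V$ is non-decreasing and hence differentiable almost everywhere regardless, and since Sard's theorem forces the critical \emph{values} to form a Lebesgue-null set, the portion of $dV$ carried by the critical set is singular with respect to Lebesgue measure and contributes zero to $V'$ almost everywhere. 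Thus your formula $V'(r)=\int_{\rho^{-1}(r)}\abs{\grad^{\Sigma^n}\rho}^{-1}\,d\mathcal H^{n-1}$ and the final inequality survive unchanged.
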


\begin{proof}[Proof of Proposition~\ref{gbm}]
Consider the function
$$
f(r)=\int\limits_0^rV^n_\delta(t)/A^{n-1}_\delta(t)dt,\qquad\text{where}\quad 0<r<\pi/\sqrt{\delta}.
$$
Note that it satisfies the relations
\begin{equation}
\label{aux:ode}
f''(r)+(n-1)\frac{\sn'_\delta}{\sn_\delta}(r)f'(r)=1,\qquad f(0)=0,\quad f'(0)=0.
\end{equation}
Define the function $\psi$ on $B(p,r)\cap\Sigma^n$ by the formula $\psi(x)=f\circ r(x)$, where $r(x)=\dist(p,x)$. Computing the Laplacian of $\psi$, we obtain
\begin{multline}
\label{aux:ineq}
\Delta_{\Sigma^n}\psi=f''(r)\abs{\nabla r}^2+f'(r)\Delta_{\Sigma^n}r\geqslant f''(r)\abs{\nabla r}^2+f'(r)\frac{\sn'_\delta}{\sn_\delta}(r)(n-\abs{\nabla r}^2)\\ =1+ (1-\abs{\nabla r}^2)\left(f'(r)\frac{\sn'_\delta}{\sn_\delta}(r)-f''(r)\right),
\end{multline}
where we used Lemma~\ref{hess} in the inequality above, and identity~\eqref{aux:ode} in the last relation. The term in the brackets on the right hand-side can be re-written in the form
\begin{equation}
\label{aux:term}
f'(r)\frac{\sn'_\delta}{\sn_\delta}(r)-f''(r)=n\frac{\sn'_\delta}{\sn^n_\delta}(r)\int_0^r\sn^{n-1}_\delta(t)dt-1.
\end{equation}
Now we consider cases when $\delta\leqslant 0$ and $\delta>0$ separately.

\noindent
{\em Case~(i).} When $\delta\leqslant 0$, by Lemma~\ref{sn:rel} we see that the quantity in~\eqref{aux:term} is non-negative, and hence, by relation~\eqref{aux:ineq}, we conclude that $\Delta_{\Sigma^n}\psi\geqslant 1$. Using the divergence theorem, we obtain
$$
V(r)=\mathit{Vol}_g(B(p,r)\cap\Sigma^n)\leqslant\!\!\!\int\limits_{B_r\cap\Sigma^n}\!\!\!\!\Delta_{\Sigma^n}\psi d\mathit{Vol}_g=\!\!\!\!\!\int\limits_{\partial B_r\cap\Sigma^n}\!\!\!\!\!\langle\grad\psi,\nu\rangle\leqslant f'(r)\mathit{Vol}(\partial B(p,r)\cap\Sigma^n),
$$
where $\nu$ is a unit normal vector, and we used the relation $\abs{\nabla r}\leqslant 1$ in the last inequality. Note that the use of the divergence theorem above is justified by the hypothesis that $\Sigma^n$ is immersed properly in $M$. Now by Lemma~\ref{coarea}, we get
$$
V(r)\leqslant f'(r)V'(r)=\frac{V^n_\delta(r)}{~A^{n-1}_\delta(r)}V'(r).
$$
The latter inequality is equivalent to the hypothesis that the ratio $V(r)/V^n_\delta(r)$ is a non-decreasing function of $r$, where $0<r<\inj_p(g)$.

\noindent
{\em Case~(ii).} When $\delta>0$, by Lemma~\ref{sn:rel} the quantity in~\eqref{aux:term} is non-positive. Introducing the new notation
$$
\epsilon_\delta(r)=1-n\frac{\sn'_\delta}{\sn^n_\delta}(r)\int_0^r\sn^{n-1}_\delta(t)dt\geqslant 0,
$$
we can re-write relation~\eqref{aux:ineq} in the form
$$
1\leqslant\Delta_{\Sigma^n}\psi+(1-\abs{\nabla r}^2)\epsilon_\delta(r)\leqslant\Delta_{\Sigma^n}\psi+\epsilon_\delta(r).
$$
Further, using Corollary~\ref{extra:mon}, one can conclude that $\epsilon_\delta(r)$ is a non-decreasing function as $r$ ranges in the interval $(0,\pi/\sqrt{\delta})$. The latter can be seen as the consequence of the relation
$$
\epsilon'_\delta(r)=\frac{n}{n-1}\alpha''_\delta(r),
$$
see identity~\eqref{ab}, where $\alpha_\delta$ is a function from Corollary~\ref{extra:mon}. This observation together with the argument in Case~(i) above yields the inequality
$$
V(r)\leqslant \frac{V^n_\delta(r)}{~A^{n-1}_\delta(r)}V'(r)+\epsilon_\delta(r)V(r),
$$
where $V(r)$ is the volume $\mathit{Vol}(B(p,r)\cap\Sigma^n)$. By the definition of $\epsilon_\delta(r)$ we obtain
$$
nV(r)\frac{\sn'_\delta}{\sn^n_\delta}(r)\int_0^r\sn^{n-1}_\delta(t)dt\leqslant V'(r)\frac{1}{\sn^{n-1}_\delta(r)}\int_0^r\sn^{n-1}_\delta(t)dt,
$$
where $0<r<\pi/\sqrt{\delta}$. The latter is equivalent to
$$
\frac{(\sn^n_\delta)'}{\sn^n_\delta}(r)=n\frac{\sn'_\delta}{\sn_\delta}(r)\leqslant\frac{V'(r)}{V(r)},
$$
and we conclude that the ratio $V(r)/\sn^n_\delta(r)$ is non-decreasing.
\end{proof}

\begin{remark}
Note that in the course of the proof of Proposition~\ref{gbm} we established the following isoperimetric inequalities
$$
\frac{~A^{n-1}_\delta(r)}{V^n_\delta(r)}\leqslant\frac{\mathit{Vol}_g(\partial B(p,r)\cap\Sigma^n)}{\mathit{Vol}_g(B(p,r)\cap\Sigma^n)},\qquad\text{when}\quad\delta\leqslant 0,
$$
$$
n\frac{\sn'_\delta}{\sn_\delta}(r)\leqslant\frac{\mathit{Vol}_g(\partial B(p,r)\cap\Sigma^n)}{\mathit{Vol}_g(B(p,r)\cap\Sigma^n)},\qquad\text{~~~when}\quad\delta> 0.
$$
The first inequality has an explicit comparison flavour. Similar results are also obtained, by a different method, in~\cite{Pa99}, but under more restrictive hypotheses -- the author assumes that the intersection $B(p,r)\cap\Sigma^n$ is connected, and a point $p$ lies in $\Sigma^n$.
\end{remark}

\section{Revisiting constructions of disjoint sets in metric measure spaces}
\label{revisit}
\subsection{Covers refinement functions}
In this section we revisit the so-called {\em decomposition theorems}, that is the constructions of disjoint sets in pseudo-metric measure spaces with controlled amount of measure. Such results originate in the work of Buser~\cite{Bu79} and Korevaar~\cite{Kor}, and are essential for obtaining upper bounds for the whole spectrum; see, for example, \cite{CM08, AH, Ko14, CEG13, HaKo}, and~\cite{Ko17, Ko18}. The known constructions rely heavily on covering properties by balls of the underlying pseudo-metric space. For applications it is also important to keep track of the bound for the radii of the balls in covers, and record how refined covers are used. These considerations motivate the definitions below. Throughout this section by $(X,d)$ we denote a separable pseudo-metric space, and $B(p,r)$ stands for an open ball $\{x\in X: d(p,x)<r\}$ in $X$. 
\begin{defin}[Small balls]
A non-decreasing function $N:(1,+\infty)\to \mathbb R^+$ is called the {\em small cover refinement function} for a pseudo-metric space $(X,d)$, if for any $\rho>1$ each ball $B(p,r)$ with $0<r\leqslant 1$ can be covered by at most $N(\rho)$ balls of radius $r/\rho$. 
\end{defin}
\begin{defin}[Arbitrary balls]
A non-decreasing function $N:(1,+\infty)\to \mathbb R^+$ is called the {\em cover refinement function}, if for any $\rho>1$ each pseudo-metric ball $B(p,r)$ with $r>0$ can be covered by at most $N(\rho)$ balls of radius $r/\rho$.
\end{defin}
The distinction between considering covers of arbitrary balls and only small balls is important for our applications, see also~\cite{AH,HaKo}. Note that if for some $\rho_0>1$ each pseudo-metric ball $B(p,r)$ with $0<r\leqslant 1$ can be covered by $N_0$ balls with radius $r/\rho_0$, then each $B(p,r)$ can be covered by $N(\rho)$  balls with radius $r/\rho$ for any $\rho>1$, see~\cite[Lemma~3.4]{GNY}. Moreover, the argument in the proof of ~\cite[Lemma~3.4]{GNY} shows that the number $N(\rho)$ of such balls in the covering can be chosen so that the function $\rho\mapsto N(\rho)$ is non-decreasing. In other words, if such a covering property holds for some $\rho_0>1$, then a small cover refinement function exists. Unlike many previous papers, see for example~\cite{GNY,AH,HaKo,Ko17} and references therein, where the mere fact whether such a covering property holds for some $\rho_0>1$ was used, for our purposes the (small) cover refinement function itself is important. For these reasons we re-state and sharpen some of the key results from~\cite{GNY,AH}. First, we recall the necessary notation.

By an annulus $A$ in a pseudo-metric space $(X,d)$ we mean a subset of the following form
$$
\{x\in X: r\leqslant d(x,a)<R\},
$$
where $a\in X$ and $0\leqslant r<R<+\infty$. These real numbers $r$ and $R$ are often referred to as the {\em inner} and {\em outer} radii respectively, and the point $a$ -- as the centre of an annulus $A$. By $2A$ we denote the annulus
$$
\{x\in X: r/2\leqslant d(x,a)<2R\}.
$$
Recall that a measure $\mu$ on a pseudo-metric space $(X,d)$ is called {\em non-atomic} if for any point $p\in X$ the mass $\mu(B(p,r))\to 0$ as $r\to 0+$. When $(X,d)$ is a metric space, this is equivalent to saying that the measure does not charge a single point in $X$.

The following statement follows by examining the proof of~\cite[Theorem~3.5]{GNY}; it is stated in the form reminiscent to~\cite[Corollary~3.12]{GNY}.
\begin{prop}
\label{prop:1}
Let $(X,d)$ be a separable pseudo-metric space such that all balls $B(p,r)$ are precompact, and $N(\rho)$ a cover refinement function for it. Then for any finite non-atomic measure $\mu$ on $X$ and any positive integer $k$ there exists a collection of $k$ annuli $\{A_i\}$ such  that the annuli $\{2A_i\}$ are pair-wise disjoint and
$$
\mu(A_i)\geqslant\mu(X)/(ck)\qquad\text{for any}\quad 1\leqslant i\leqslant k,
$$
where $c=8N(1600)$.
\end{prop}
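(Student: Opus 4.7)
\medskip\noindent
\textbf{Proof plan.}
The statement is a quantitative sharpening of Theorem~3.5 and Corollary~3.12 of~\cite{GNY}, the only novelty being the explicit value $c=8N(1600)$. My plan is to re-run the Grigor'yan--Netrusov--Yau greedy construction while keeping numerical track of every bound. Write $N=N(1600)$ and $m=\mu(X)/(ck)$. I will build annuli $A_1,\dots,A_k$ together with auxiliary ``quarantine'' balls $\hat B_1,\dots,\hat B_k$ so that $2A_j\subset \hat B_j$, the $\hat B_j$ are pairwise disjoint, and $\mu(A_j)\geqslant m$; pairwise disjointness of the doubled annuli is then automatic. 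At step $j$, working inside $U_j=X\setminus\bigcup_{i<j}\hat B_i$, the task is to extract $A_j$ and $\hat B_j\subset U_j$ with $\mu(\hat B_j)\leqslant \mu(X)/(4k)$, so that after $k$ steps the alive region still has mass at least $3\mu(X)/4$.

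The heart of the proof is an extraction lemma established by descent. Starting from any ball $B(p,r)\subset U_j$ with $\mu(B(p,r))\geqslant m$, either $\mu(B(p,1600r))\leqslant 2Nm$ and we stop, or the cover refinement property provides at most $N$ balls of radius $r$ covering $B(p,1600r)$, one of which (by pigeonhole) carries mass exceeding $2m$. Re-centring and rescaling $r\mapsto r/1600$ yields a strictly smaller ball with at least twice the mass; since $\mu$ is finite, non-atomic, and balls are precompact, this doubling terminates after finitely many steps. At termination we obtain $p_*, r_*$ with $\mu(B(p_*,r_*))\geqslant m$ and $\mu(B(p_*,1600r_*))\leqslant 2Nm=\mu(X)/(4k)$, after which I set $A_j=B(p_*,800r_*)\setminus B(p_*,2r_*)$ and $\hat B_j=B(p_*,1600r_*)$, so that $2A_j\subset \hat B_j$ by direct inspection.

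The chief obstacle is the bound $\mu(A_j)\geqslant m$: the extraction lemma controls the mass of $\hat B_j$ globally, but not the distribution between the deleted core $B(p_*,2r_*)$ and the annular region $A_j$. The resolution will be to refine the descent so that the stopping radius $r_*$ is a critical value at which $\mu(B(p_*,\cdot))$ crosses $m$ from below, allowing the non-atomicity of $\mu$ to force $\mu(B(p_*,2r_*))$ to be strictly smaller than $\mu(B(p_*,800r_*))-m$. The explicit constant $c=8N(1600)$ then falls out of three multiplicative sources: the factor $N(1600)$ from the cover refinement, a factor $2$ from the mass-doubling step, and a factor $4$ from reserving $3/4$ of $\mu(X)$ for the alive region across all $k$ extractions.
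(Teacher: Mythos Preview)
The paper gives no proof of this proposition; it simply records that the statement ``follows by examining the proof of \cite[Theorem~3.5]{GNY}''. Your proposal therefore has to stand as an independent reconstruction, and the extraction lemma at its heart does not work as written.

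The descent step is broken. From $B(p,r)$ with $\mu(B(p,r))\geqslant m$, if $\mu(B(p,1600r))>2Nm$ you cover $B(p,1600r)$ by $N$ balls of radius $r$ and pigeonhole to one with mass exceeding $2m$. But those covering balls have radius $r$, not $r/1600$; the phrase ``rescaling $r\mapsto r/1600$ yields a strictly smaller ball with at least twice the mass'' is simply false here. With the radius fixed, each pass only produces another ball of radius $r$ with mass $>2m$, and nothing forces the stopping condition $\mu(B(\cdot,1600r))\leqslant 2Nm$ to ever hold. If instead you intend the threshold to scale with the current mass (so that the mass genuinely doubles and the process halts by finiteness of $\mu$), then at termination you only obtain $\mu(B(p_*,1600r_*))\leqslant 2N\mu(B(p_*,r_*))$, which may be far larger than the $\mu(X)/(4k)$ your quarantine budget requires. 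The second gap---that $\mu(A_j)\geqslant m$ is unproved---is also real, and your proposed fix does not close it: arranging $\mu(B(p_*,r_*))\approx m$ says nothing about $\mu(B(p_*,2r_*))$, which could already absorb all of $\mu(B(p_*,800r_*))$. The genuine GNY iteration runs in the opposite direction, \emph{shrinking} the radius geometrically (recentring into a covering ball at each step) while maintaining a fixed lower mass bound, and terminating by non-atomicity at the first scale where the mass drops below that bound; note also that the paper's definition of annulus permits inner radius zero, so the output may simply be a ball, which sidesteps the obstacle you flagged.
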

Note that the existence of a cover refinement function is one of the hypotheses in Proposition~\ref{prop:1}. We also need a statement with the weaker hypothesis -- the existence of a small cover refinement function. It can be obtained by revisiting~\cite{CM08,AH}. The following proposition is a sharpened version of~\cite[Theorem~2.1]{AH}.
\begin{prop}
\label{prop:2}
Let $(X,d)$ be a separable pseudo-metric space such that all balls $B(p,r)$ are precompact, and $N(\rho)$ a small cover refinement function for it. Then for any finite non-atomic measure $\mu$ on $X$ and any positive integer $k$ there exists a collection of $k$ bounded Borel sets $\{A_i\}$ such that
$$
\mu(A_i)\geqslant\mu(X)/(ck)\qquad\text{for any }\quad 1\leqslant i\leqslant k,
$$
where $c=64N(1600)$, and one of the following possibilities hold:
\begin{itemize}
\item [(i)] either all the $A_i$'a are annuli, and then the annuli $2A_i$ are pair-wise disjoint and their outer radii are not greater than one,
\item [(ii)] or the $r_0$-neighborhoods 
$$
A_i^{r_0}=\left\{x\in X: \dist(x,A_i)\leqslant r_0\right\}
$$
are pair-wise disjoint, where $r_0=1600^{-1}$.
\end{itemize}
\end{prop}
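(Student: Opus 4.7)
The plan is to mirror the structure of the Grigoryan--Netrusov--Yau proof underlying Proposition~\ref{prop:1} and its variant in~\cite{AH}, but to run the extraction procedure only at scales in $(0,1]$, beyond which the small cover refinement function $N(\rho)$ offers no control. Setting $r_0 = 1600^{-1}$, the dichotomy between cases (i) and (ii) will reflect whether this truncation succeeds or forces a coarser choice of disjoint sets.

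First I would attempt to extract $k$ annuli of outer radius at most one with pairwise disjoint doubles and individual measure at least $\mu(X)/(8 N(1600) k)$, by running the construction underlying Proposition~\ref{prop:1} restricted to scales in $(0,1]$. At each recursive step the construction needs to cover a ball by a bounded number of balls of smaller radius, and under the restriction this ball has radius at most one, so the small cover refinement hypothesis applies with the same constants tracked in Proposition~\ref{prop:1}. If the restricted construction terminates successfully, we land in case (i) with a constant even better than the stated $64 N(1600)$.

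If the restricted extraction fails, the failure records an obstruction at the top of the recursion: the mass of $\mu$ spreads across too many regions of $X$ to be organised into a nested sequence of balls of radius at most one. From this one extracts a $4 r_0$-separated family of points $\{x_j\}$ with $\mu(B(x_j, r_0)) \geqslant \mu(X)/(64 N(1600) k)$ of cardinality at least $k$; the passage from the failure of the restricted extraction to such a family is the combinatorial heart of the argument, and uses the iterated halving construction of~\cite[Lemma~3.4]{GNY} to express the relevant packing factor in terms of $N(1600)$ via the monotonicity of $\rho \mapsto N(\rho)$. Choosing $A_i = B(x_{j_i}, r_0)$, the neighborhoods $A_i^{r_0} \subset B(x_{j_i}, 2 r_0)$ are then pairwise disjoint by the $4 r_0$-separation, yielding case (ii).

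The main obstacle is the constant accounting: the factor $c = 64 N(1600) = 8 \cdot 8 \cdot N(1600)$ must absorb the $8 N(1600)$ inherited from Proposition~\ref{prop:1} together with an additional factor of $8$ incurred when converting a $2 r_0$-separated family into a $4 r_0$-separated one whose $r_0$-neighborhoods are disjoint, and must ensure that every intermediate packing constant produced by the halving iteration is majorised by the single quantity $N(1600)$ rather than some larger $N(\rho_0)$. Arranging these bookkeeping steps so that they combine cleanly, rather than degrading into a weaker constant, and so that the same dichotomy argument covers both the restricted GNY extraction and the recursive depth control, is the technical subtlety one has to resolve.
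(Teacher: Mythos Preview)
Your plan for case~(i)---running the Grigoryan--Netrusov--Yau extraction restricted to radii in $(0,1]$---matches the paper's strategy. The gap is in case~(ii).

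You propose that failure of the restricted extraction yields a $4r_0$-separated family of points $\{x_j\}$ with $\mu(B(x_j,r_0))\geqslant \mu(X)/(64N(1600)k)$, and take the $A_i$ to be these balls. But the direction here is backwards. The obstruction to the restricted GNY construction is precisely that mass is too \emph{thinly} spread at small scales: the relevant dichotomy is whether or not there exists a ball of radius $r_0$ carrying at least $\mu(X)/(4N(4)k)$ of the mass. When no such ball exists---i.e.\ when \emph{every} ball of radius $r_0$ has small measure---one cannot hope to find $k$ separated balls each carrying $\mu(X)/(ck)$; consider for instance mass spread in geometrically decaying amounts over infinitely many well-separated regions. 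What the small-ball hypothesis \emph{does} buy is exactly the input to Lemma~\ref{l:cm} (the sharpened Colbois--Maerten lemma), and this is how the paper produces the sets in case~(ii): an inductive construction of pairs $(A_j,D_j)$ via Claim~\ref{aux:cl}, where each $A_j$ is built as a union of balls intersected with the complement of earlier $D_i$'s. These $A_j$ are genuinely not balls, and the argument does not pass through any separated family of centres.

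So the missing ingredient is Lemma~\ref{l:cm}: you need to replace the unsubstantiated extraction of large-mass separated balls by the Colbois--Maerten mechanism, which is what converts the hypothesis ``all $r_0$-balls are light'' into $k$ disjoint-neighbourhood sets with the required mass lower bound. The linear dependence of $c$ on $N$ that you correctly flag as the delicate point comes from the sharpened constant $2N$ in Lemma~\ref{l:cm} (rather than the quadratic $N^2$ appearing in earlier versions), not from any packing-to-separation conversion.
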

An important new point in Proposition~\ref{prop:2} is the linear dependence of the constant $c$ on the refinement function $N$. The proof of Proposition~\ref{prop:2} follows the idea in~\cite{AH}; it relies on the argument in the proof of~\cite[Theorem~3.5]{GNY} and an improved version of a statement from~\cite{CM08}. We discuss it in more detail at the end of the section. 

Now we consider the main example that is used in the sequel -- pseudo-metric measure spaces with homogeneous bounds on the measure of balls. We describe it in the form of the following lemma; its proof is rather standard, but we include it for the sake of completeness.
\begin{lemma}
\label{l:hom}
Let $(X,d)$ be a pseudo-metric space equipped with a measure $\nu$ such that
$$
C_1r^\alpha\leqslant \nu(B(p,r))\leqslant C_2r^\alpha\qquad\text{for any}\quad p\in X\text{ ~and~ } 0<r\leqslant 3,
$$
where $C_1$, $C_2$, and $\alpha$ are positive constants. Then the function $N(\rho)=(6\rho)^\alpha C_2/C_1$ is a small cover refinement function for $X$. If the above inequalities hold for any $r>0$ and any $p\in X$, then the function $N(\rho)$ is a cover refinement function for $X$.
\end{lemma}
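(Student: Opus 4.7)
The plan is to use the standard maximal-packing argument: given a ball $B(p,r)$ and $\rho>1$, choose a maximal collection $\{x_1,\dots,x_k\}\subset B(p,r)$ that is $(r/\rho)$-separated, i.e.\ $d(x_i,x_j)\geqslant r/\rho$ for $i\neq j$. By maximality, every point of $B(p,r)$ lies within distance $r/\rho$ of some $x_i$, so the balls $B(x_i,r/\rho)$ cover $B(p,r)$. By the triangle inequality, the smaller balls $B(x_i,r/(2\rho))$ are pairwise disjoint and are all contained in $B(p,r+r/(2\rho))$. The task is therefore reduced to bounding $k$ in terms of $\rho$, $\alpha$, $C_1/C_2$.

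The second step is a purely measure-theoretic comparison. First one checks that the ranges of radii appearing in the estimates are admissible: for $0<r\leqslant 1$ and $\rho>1$ one has $r/(2\rho)<1\leqslant 3$ and $r+r/(2\rho)\leqslant 3r/2\leqslant 3$, so both the lower bound $\nu(B(x_i,r/(2\rho)))\geqslant C_1(r/(2\rho))^\alpha$ and the upper bound $\nu(B(p,r+r/(2\rho)))\leqslant C_2(r+r/(2\rho))^\alpha$ are applicable. Disjointness then yields
\[
k\cdot C_1\Bigl(\frac{r}{2\rho}\Bigr)^\alpha\;\leqslant\;\sum_{i=1}^k\nu\bigl(B(x_i,r/(2\rho))\bigr)\;\leqslant\;\nu\bigl(B(p,r+r/(2\rho))\bigr)\;\leqslant\;C_2\Bigl(r+\frac{r}{2\rho}\Bigr)^\alpha,
\]
so that
\[
k\;\leqslant\;\frac{C_2}{C_1}(2\rho+1)^\alpha\;\leqslant\;\frac{C_2}{C_1}(6\rho)^\alpha\;=\;N(\rho).
\]
Since the right-hand side is manifestly non-decreasing in $\rho$, $N$ is a small cover refinement function.

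For the second assertion, when the measure bounds hold for all $r>0$, the exact same argument goes through for arbitrary $r>0$: the only place admissibility was used was to apply the upper bound at scale $r+r/(2\rho)$ and the lower bound at scale $r/(2\rho)$, and both are now unrestricted. Hence $N(\rho)$ becomes a cover refinement function in the sense of the first definition.

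There is essentially no obstacle; the proof is routine once the packing--covering duality is set up. The only small point requiring care is the bookkeeping around the hypothesis $0<r\leqslant 3$: one must verify that the auxiliary radii $r/(2\rho)$ and $r+r/(2\rho)$ used in the estimate stay within this admissible range when $0<r\leqslant 1$, which is why the hypothesis is formulated at scale $3$ rather than $1$.
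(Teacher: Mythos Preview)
Your proof is correct and follows essentially the same maximal-packing argument as the paper's proof; the only cosmetic difference is that you compare the disjoint balls $B(x_i,r/(2\rho))$ against the ball $B(p,r+r/(2\rho))$, whereas the paper compares them against $B(p_{i_0},3r)$ (recentred at one of the packing points), which is why the constant $(6\rho)^\alpha$ arises exactly in the paper but only after a relaxation in your version.
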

\begin{proof}
We prove the first statement of the lemma; the second statement for arbitrary balls follows by the same argument. For a given value $\rho>1$ and a ball $B(p,r)$ with $0<r\leqslant 1$, let $\{B(p_i,r/(2\rho))\}$ be a maximal collection of disjoint balls of radii $r/(2\rho)$ centred at a point $p_i\in B(p,r)$, where $i=1,\ldots,\ell$. It is straightforward to see that the family of balls $\{B(p_i,r/\rho)\}$, where $i=1,\ldots,\ell$, covers the ball $B(p,r)$. Thus, for a proof of the statement it is sufficient to show that the cardinality $\ell$ of this cover is not greater than $(6\rho)^\alpha C_2/C_1$. 

Let $i_0$ be an index such that the measure $\nu(B(p_{i_0},r/(2\rho)))$ is the least value among all measures $\nu(B(p_{i},r/(2\rho)))$, where $i$ ranges over $1,\ldots,\ell$. Then we obtain
\begin{equation}
\label{eq:3:1}
\ell\nu(B(p_{i_0},r/(2\rho)))\leqslant \sum\limits_{i=1}^\ell\nu(B(p_{i},r/(2\rho)))\leqslant \nu(B(p,2r))
\leqslant\nu(B(p_{i_0},3r)),
\end{equation}
where in the second inequality we used the inclusion $B(p_i,r/(2\rho))\subset B(p,2r)$, and in the third the inclusion $B(p,2r)\subset B(p_{i_0},3r)$. Thus, using the hypotheses on the lemma, we obtain
$$
\ell\leqslant\frac{\nu(B(p_{i_0},3r))}{\nu(B(p_{i_0},r/(2\rho)))}\leqslant\frac{C_2(3r)^\alpha}{C_1(r/2\rho)^\alpha}=(6\rho)^\alpha\frac{C_2}{C_1},
$$
and finish the proof of the first statement.
\end{proof}

\subsection{On the proof of Proposition~\ref{prop:2}}
A new ingredient in the proof of Proposition~\ref{prop:2} is the following improved version of~\cite[Corollary~3.12]{CM08}, see also~\cite[Lemma~2.1]{CEG13}.
\begin{lemma}
\label{l:cm}
Let $(X,d)$ be a separable pseudo-metric space, $r>0$ a real number, and $N$ a positive integer such that any ball of radius $4r$ in $X$ can be covered by $N$ balls of radius $r$. Let $\mu$ be a finite Borel measure, and $k$ a positive integer such that
$$
\mu(B(p,r))\leqslant\frac{\mu(X)}{4Nk}\qquad\text{for any}\quad p\in X.
$$
Then there exists a collection of $k$ bounded Borel subsets $\{A_i\}$ such that
$$
\mu(A_i)\geqslant\frac{\mu(X)}{2Nk}\qquad\text{for any}\quad 1\leqslant i\leqslant k,
$$
and the $r$-neighbourhoods $\{A_i^r\}$'s are pair-wise disjoint.
\end{lemma}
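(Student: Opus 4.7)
The proof will proceed by a greedy iterative construction in the spirit of \cite{CM08}, the goal being to make the bookkeeping tight enough to keep the constant linear in $N$. Set $F_1=X$ and proceed inductively: having chosen $p_1,\ldots,p_{i-1}$, put
\[ F_i = X \setminus \bigcup_{j<i} B(p_j,4r), \]
choose $p_i\in X$ with $\mu(B(p_i,2r)\cap F_i)\geqslant \mu(X)/(2Nk)$, and define $A_i = B(p_i,2r)\cap F_i$. The separation condition is essentially automatic from the definition: for $i>j$ any $x\in A_i\subset F_i$ satisfies $d(x,p_j)\geqslant 4r$, while any $y\in A_j\subset B(p_j,2r)$ satisfies $d(y,p_j)<2r$, and hence $d(x,y)>2r$. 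With routine care about open versus closed balls (inflating $4r$ to any strictly larger radius changes no estimate), this forces $A_i^r\cap A_j^r=\emptyset$.

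The main content is to show that a valid $p_i$ exists for every $i\leqslant k$. By the covering hypothesis, $\mu(B(p_j,4r))\leqslant N\cdot \mu(X)/(4Nk)=\mu(X)/(4k)$, so
\[ \mu(F_i) \geqslant \mu(X) - (i-1)\mu(X)/(4k) \geqslant \tfrac{3}{4}\mu(X) \qquad\text{whenever } i\leqslant k. \]
Suppose for contradiction that at some step $i^*\leqslant k$ one has $\mu(B(p,2r)\cap F_{i^*})<\mu(X)/(2Nk)$ for every $p\in X$. I would choose a maximal $2r$-separated subset $\{q_\alpha\}\subset F_{i^*}$, so that $\{B(q_\alpha,2r)\}$ covers $F_{i^*}$ and the balls $\{B(q_\alpha,r)\}$ are pairwise disjoint. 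The critical combinatorial observation, drawn directly from the covering hypothesis, is that this cover has multiplicity at most $N$: any $q_\alpha$ in $B(x,2r)\subset B(x,4r)$ must lie in one of the $N$ balls of radius $r$ that cover $B(x,4r)$, and no two $2r$-separated points can share an $r$-ball.

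The contradiction is then to be obtained by juxtaposing four estimates: (a) the sub-cover inequality $\mu(F_{i^*})\leqslant\sum_\alpha\mu(B(q_\alpha,2r)\cap F_{i^*})$; (b) the multiplicity estimate $\sum_\alpha\mu(B(q_\alpha,2r)\cap F_{i^*})\leqslant N\mu(F_{i^*})$; (c) the standing per-ball bound $\mu(B(q_\alpha,2r)\cap F_{i^*})<\mu(X)/(2Nk)$; and (d) the disjointness $\sum_\alpha\mu(B(q_\alpha,r))\leqslant\mu(X)$ combined with the ambient hypothesis $\mu(B(q_\alpha,r))\leqslant\mu(X)/(4Nk)$, which controls the effective number of centres contributing measure to the cover.

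\textbf{Main obstacle.} The principal difficulty, and precisely what distinguishes the improved constant from the weaker versions in \cite{CM08,CEG13}, is the quantitative counting in the last paragraph. One must use the multiplicity $N$ as a tight upper bound on the overlap (not as a slack factor appearing twice) so that the number of centres $q_\alpha$ carrying nontrivial mass is of order $Nk$, allowing the per-ball bound $\mu(X)/(2Nk)$ to sum against $\mu(F_{i^*})\geqslant 3\mu(X)/4$ and yield the required contradiction. This tight bookkeeping is what I expect to be the delicate step of the proof.
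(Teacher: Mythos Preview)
Your greedy scheme has a genuine gap at the step you flag as the main obstacle: the claim that some single ball $B(p_i,2r)$ captures at least $\mu(X)/(2Nk)$ of the remaining mass is false in general, and no combination of your estimates (a)--(d) can rescue it. Consider $X=[0,L]$ with Lebesgue measure, $r=1$, and $N$ the covering constant for intervals (say $N=4$). The hypothesis $\mu(B(p,r))\leqslant\mu(X)/(4Nk)$ becomes $2\leqslant L/(16k)$, i.e.\ $L\geqslant 32k$; but for $L>32k$ one needs $\mu(B(p,2r)\cap F_1)\geqslant L/(8k)>4$, while every ball $B(p,2)$ has measure at most $4$. So the construction fails already at $i=1$. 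The underlying issue is that the number of $2r$-separated centres $q_\alpha$ is of order $L/r$, not $O(Nk)$; neither disjointness of the $r$-balls nor the multiplicity bound on the $2r$-balls yields an upper bound on their count.

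The paper's proof avoids this by allowing each $A_j$ to be a \emph{union} of many $r$-balls rather than a single ball. It proves an auxiliary claim: with $\beta=\mu(X)/(2Nk)$, one defines $\xi_\ell=\sup\{\mu(U):U\text{ is a union of }\leqslant\ell\text{ balls of radius }r\}$, finds the smallest $\ell$ with $\xi_\ell>\beta$, and takes $A$ to be a witnessing union and $D$ its $4r$-fattening. The covering hypothesis then gives $\mu(D)\leqslant 2N\xi_{\ell-1}\leqslant 2N\beta$, and iterating on the complement of $D$ produces the required family. The key point is that the number of balls in $A$ adapts to the spread of the measure, which is exactly what your one-ball-per-step approach cannot do.
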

The proof of this lemma is based on the following statement.
\begin{claim}
\label{aux:cl}
Let $(X,d)$ be a separable pseudo-metric space, $r>0$ a real number, and $N$ a positive integer such that any ball of radius $4r$ in $X$ can be covered by $N$ balls of radius $r$. Let $\mu$ be a finite Borel measure, and $\beta<\mu(X)$ a positive real number such that
\begin{equation}
\label{aux:1}
\mu(B(p,r))\leqslant\frac{\beta}{2}\qquad\text{for any}\quad p\in X.
\end{equation}
Then there exist bounded Borel subsets $A\subset D$ in $X$ such that
$$
\beta\leqslant\mu(A)\leqslant\mu(D)\leqslant 2N\beta,
$$
and $\dist(A,D^c)\geqslant 3r$.
\end{claim}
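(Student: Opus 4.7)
The plan is to construct $A$ and $D$ via a greedy selection of small balls. Setting $M=\sup_{p\in X}\mu(B(p,r))\in(0,\beta/2]$, I would iteratively choose centres $p_1,p_2,\dots$ in $X$ with pairwise distance at least $cr$ (for a separation constant $c\geqslant 8$, to be calibrated), picking each $p_{j+1}$ so as to maximise $\mu(B(p_{j+1},r))$ among admissible candidates (those at distance at least $cr$ from all previously chosen $p_i$). The process stops at the smallest $k$ for which the cumulative mass $\sum_{i\leqslant k}\mu(B(p_i,r))\geqslant\beta$, and I set
$$
A=\bigcup_{i=1}^{k}B(p_i,r),\qquad D=\bigcup_{i=1}^{k}B(p_i,4r).
$$

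The $3r$-separation of $A$ from $D^c$ is immediate from the (pseudo-)triangle inequality: for $y\in B(p_i,r)$ and $x\in D^c$, the condition $x\notin B(p_i,4r)$ forces $d(x,p_i)\geqslant 4r$, hence $d(x,y)\geqslant 4r-r=3r$. The lower bound $\mu(A)\geqslant\beta$ holds by the stopping rule, and since the last ball added contributes at most $\beta/2$, one obtains $\mu(A)\leqslant\beta+\beta/2=3\beta/2$, in particular at most $2N\beta$. The choice $c\geqslant 8$ guarantees that the sets $B(p_i,4r)$ are pairwise disjoint, so $\mu(D)=\sum_i\mu(B(p_i,4r))$.

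For the upper bound on $\mu(D)$, the covering hypothesis yields $\mu(B(p_i,4r))\leqslant N\cdot\sup_{q\in B(p_i,4r)}\mu(B(q,r))$. The crux is to promote this to $\mu(B(p_i,4r))\leqslant N\mu(B(p_i,r))$ by exploiting greedy maximality. If $c$ is calibrated sufficiently large (I expect a value around $c\geqslant 12$), the $N$ covering centres of $B(p_i,4r)$ remain admissible candidates at step $i$, and so their $r$-balls each have mass at most $\mu(B(p_i,r))$. Summing over $i$ and using pairwise disjointness of the smaller balls $B(p_i,r)$ (automatic from any separation $\geqslant 2r$), one then gets
$$
\mu(D)\leqslant N\sum_i\mu(B(p_i,r))=N\mu(A)\leqslant \tfrac{3N\beta}{2}\leqslant 2N\beta,
$$
which closes the proof.

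The main obstacle lies in the calibration of $c$. A priori, the $4r$-enlargement of $p_i$ extends into the ``forbidden'' buffer zones around the earlier $p_j$, so the $r$-balls appearing in the covering of $B(p_i,4r)$ may fail to be admissible under the greedy rule, invalidating the desired comparison $\mu(B(q,r))\leqslant\mu(B(p_i,r))$ for $q$ in such a covering. Enlarging $c$ beyond $8$ resolves this --- at the price of stricter admissibility but with no effect on the final constant $2N\beta$, since $N$ is already built into the covering hypothesis. The remaining technical point --- termination of the greedy procedure, i.e.\ the existence of a finite $k$ --- follows from the finiteness of $\mu(X)$ together with a standard approximation of $X$ by a precompact subset whose $\mu$-mass still exceeds $\beta$.
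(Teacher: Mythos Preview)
Your ``crux'' step has a genuine gap. You assert that for $c$ sufficiently large the covering centres $q$ of $B(p_i,4r)$ are admissible at step $i$, so that greedy maximality yields $\mu(B(q,r))\leqslant\mu(B(p_i,r))$. But admissibility at step $i$ means $d(q,p_j)\geqslant cr$ for every $j<i$, while a covering centre that actually meets $B(p_i,4r)$ satisfies $d(q,p_i)<5r$; since the greedy rule only guarantees $d(p_i,p_j)\geqslant cr$, the triangle inequality gives merely $d(q,p_j)>cr-5r$, which is strictly below the threshold $cr$ regardless of how large $c$ is. The admissibility bar rises with $c$, so enlarging $c$ cannot close this $5r$ deficit. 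Consequently the comparison $\mu(B(q,r))\leqslant\mu(B(p_i,r))$ is unjustified and the bound $\mu(D)\leqslant N\mu(A)$ does not follow. There is a second, related obstruction: with large separation $cr$ the admissible set may empty out before the accumulated mass reaches $\beta$. For instance, on $X=[0,10]$ with Lebesgue measure, $r=1$, $\beta=4$, and $c=12$, the very first centre already renders every point of $X$ inadmissible, leaving $\mu(A)\leqslant 2<\beta$; finiteness of $\mu$ and precompactness do not help here.

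The paper sidesteps both issues by abandoning greedy separation altogether. It works instead with the extremal quantities $\xi_\ell=\sup\{\mu(U):U\text{ is a union of at most }\ell\text{ balls of radius }r\}$, locates the critical index $k\geqslant 2$ with $\xi_{k-1}\leqslant\beta<\xi_k$, and takes $A\in\mathcal U_k$ with $\mu(A)>\beta$ and $D$ its $4r$-enlargement. The covering hypothesis then places $D$ inside a union of $kN\leqslant 2(k-1)N$ balls of radius $r$, i.e.\ inside a union of $2N$ sets from $\mathcal U_{k-1}$, whence $\mu(D)\leqslant 2N\xi_{k-1}\leqslant 2N\beta$. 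The key point is that the bound comes from the global supremum $\xi_{k-1}$ rather than from any pointwise greedy comparison, so no admissibility bookkeeping is needed.
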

\begin{proof}
For a positive integer $\ell$ let $\mathcal U_\ell$ be the collection of all subsets in $X$ that can be written as unions of at most $\ell$ balls of radius $r$, that is
$$
\mathcal U_\ell=\left\{\bigcup_{j=1}^\ell B(x_j,r): x_1,\ldots, x_\ell\in X\right\}.
$$
By $\xi_\ell$ we denote the supremum $\sup\{\mu(U): U\in\mathcal U_\ell\}$. Note that $\mathcal U_\ell\subset\mathcal U_{\ell+1}$, and hence, the sequence $\xi_\ell$ is non-decreasing, $\xi_\ell\leqslant\xi_{\ell+1}$. Since $X$ is a separable pseudo-metric space, it is straightforward to see that there exists a sequence of subsets $\{U_\ell\}$ such that $U_\ell\in\mathcal U_\ell$, $U_\ell\subset U_{\ell+1}$ for each $\ell$, and $\cup_\ell U_\ell=X$. Thus, we conclude that the sequence $\xi_\ell$ converges to the value $\mu(X)$. Since by~\eqref{aux:1} we have $\xi_1\leqslant\beta/2$, then there exists an integer $k\geqslant 2$ such that
$$
\xi_{k-1}\leqslant\beta<\xi_k.
$$
The second inequality implies that there exists a set $A\in\mathcal U_k$ such that $\mu(U)>\beta$. The set $A$ has the form $\cup B(p_j,r)$ for some points $p_j\in X$, and then we define the set $D\subset X$ as the union
$$
D=\bigcup_{j=1}^k B(p_j,4r).
$$
It is straightforward to see that $\dist(A,D^c)$ is at least $3r$. Thus, for a proof of the claim it remains to show that $\mu(D)\leqslant 2N\beta$.

To prove the last inequality note that each ball $B(p_j,4r)$ can be covered by $N$ balls of radius $r$. Thus, the set $D$ can be covered by $kN$ balls of radius $r$, that is, $D\subset W$, where $W\in\mathcal U_{kN}$. Since $kN\leqslant 2(k-1)N$, we see that $W$ can be represented as the union
$$
W=\bigcup_{j=1}^{2N}W_j,\qquad\text{where}\quad W_j\in\mathcal U_{k-1},
$$
and we obtain
$$
\mu(D)\leqslant\mu(W)\leqslant\sum_{j=1}^{2N}\mu(W_j)\leqslant 2N\xi_{k-1}\leqslant 2N\beta.
$$
Thus, the claim is proved.
\end{proof}

\begin{proof}[Proof of Lemma~\ref{l:cm}]
Equipped with Claim~\ref{aux:cl} we can now prove the lemma, following the line of the argument in~\cite[Section~4]{CEG13}. More precisely, taking $\beta=\mu(X)/(2Nk)$, one can construct inductively $k$ pairs $(A_j,D_j)$, where $1\leqslant j\leqslant k$, such that 
$$
A_j\subset D_j,\qquad \dist(A_j,(\cup_{i\leqslant j}D_i)^c)\geqslant 3r,
$$ 
the inequalities
$$
\beta\leqslant\mu(A_j)\leqslant\mu(D_j)\leqslant 2N\beta=\frac{\mu(X)}{k}
$$
hold, and additionally, $A_j\subset (\cup_{i<j}D)^c$. The above claim is used in the induction step. Then the family $\{A_j\}$ satisfies the conclusion of Lemma~\ref{l:cm}. Indeed, we have
$$
\mu(A_j)\geqslant\beta=\frac{\mu(X)}{2Nk},
$$
and since
$$
\dist(A_l,A_j)\geqslant\dist(A_l,(\cup_{i\leqslant l}D_i)^c)\geqslant 3r
$$
for $l<j$, we see that the $r$-neighbourhoods $\{A_j^r\}$ are pair-wise disjoint.

To make the exposition more self-contained, we describe briefly the induction argument for the existence of such pairs $(A_j,D_j)$. Taking $\beta=\mu(X)/(2Nk)$, by the hypotheses of the lemma we see that Claim~\ref{aux:cl} applies, and there are bounded Borel sets $A_1\subset D_1$ such that
$$
\beta\leqslant\mu(A_1)\leqslant\mu(D_1)\leqslant 2N\beta=\frac{\mu(X)}{k},
$$
and $\dist(A_1,D_1^c)\geqslant 3r$. Now suppose that for $1\leqslant j<k$ the desired pairs $\{(A_i,D_i)\}$, where $i=1,\ldots,j$, are constructed. Denote by $\mu_{j+1}$ the measure on $X$, obtained by restricting $\mu$ to the complement $(\cup_{i\leqslant j}D_i)^c$. Note that for any ball $B(p,r)$ the inequalities
$$
\mu_{j+1}(B(p,r))\leqslant\mu(B(p,r))\leqslant\frac{\mu(X)}{4Nk}=\frac{\beta}{2}
$$
hold. By the induction hypotheses we also have
$$
\mu_{j+1}(X)\geqslant \mu(X)-\sum_{i=1}^j\mu(D_j)\geqslant\mu(X)\left(1-\frac{j}{k}\right)\geqslant\frac{\mu(X)}{k},
$$
and hence, see that
$$
\beta=\frac{\mu(X)}{2Nk}\leqslant\frac{\mu_{j+1}(X)}{2N}<\mu_{j+1}(X). 
$$
Thus, Claim~\ref{aux:cl} applies to the measure $\mu_{j+1}$ on $X$, and there are sets $A\subset D$ in $X$ such that
$$
\beta\leqslant\mu_{j+1}(A)\leqslant\mu_{j+1}(D)\leqslant 2N\beta=\frac{\mu(X)}{k},
$$
and $\dist(A,D^c)\geqslant 3r$. The pair $(A_{j+1},D_{j+1})$ is defined by setting
$$
A_{j+1}=A\cap (\cup_{i\leqslant j}D_i)^c\quad\text{ and }D_{j+1}=D\cap (\cup_{i\leqslant j}D_i)^c.
$$
It is straightforward to check that these sets satisfy the required hypotheses.
\end{proof}

Now the proof of Proposition~\ref{prop:2} follows the scheme in~\cite[Section~2]{AH} with necessary adjustments for the constants involved. It relies on the argument in the proof of~\cite[Theorem~3.5]{GNY} and uses Lemma~\ref{l:cm} above in place of~\cite[Lemma~2.3]{AH}.

\section{Proofs}
\label{proofs}

\subsection{Proof of Theorem~\ref{mt}}
Let $(M,g)$ be an $m$-dimensional Riemannian manifold that satisfies the hypotheses of Theorem~\ref{mt}, and $\dist_g(\cdot,\cdot)$ a distance function on it. Scaling the metric $g$, we may assume that $\rad(g)$ equals three. Then the combination of Lemma~\ref{l:hom} and Corollary~\ref{gb:c} implies that the function
\begin{equation}
\label{mt:0}
N(\rho)=C_{11}(m)\frac{\mathit{Vol}_g(M)}{\rad(g)^m}\rho^m,
\end{equation}
where $C_{11}(m)=24^m/\omega_m$, is a small cover refinement function for the metric space $(M,\dist_g)$. For a given metric $\tilde g$ conformal to $g$, denote by $\mu$ its volume measure $\mathit{Vol}_{\tilde g}$ on $M$. Then by Proposition~\ref{prop:2} for any given positive integer $k$ there exists a collection of $2(k+1)$ bounded Borel sets $\{A_i\}$ such that
\begin{equation}
\label{mt:1/2}
\mu(A_i)\geqslant\mu(M)/(2c(k+1))\geqslant\mu(M)/(4ck)
\end{equation}
for all $i=1,\ldots, 2(k+1)$, where $c=64N(1600)$, and one of the following possibilities hold:
\begin{itemize}
\item[(i)] either all the $A_i$'s are annuli, and the annuli $2A_i$'s are pair-wise disjoint and their outer radii are not greater than one,
\item[(ii)] or the $r_0$-neighbourhoods of the $A_i$'s, where $r_0=1600^{-1}$, are pair-wise disjoint.
\end{itemize}
Note that, using formula~\eqref{mt:0}, the estimate for $\mu(A_i)$ in relation~\eqref{mt:1/2} can be re-written in the form
\begin{equation}
\label{mt:5}
\mathit{Vol}_{\tilde g}(A_i)\geqslant\frac{\mathit{Vol}_{\tilde g}(M)}{k}\left(256 (1600)^m C_{11}(m)\mathit{Vol}_g(M)/\rad(g)^m\right)^{-1}
\end{equation}
for all $i=1,\ldots, 2(k+1)$. Now we consider two cases corresponding to the two possibilities~(i) and~(ii) above.

\smallskip
\noindent
{\em Case~(i).} Since the annuli $2A_i$'s are pair-wise disjoint, we have
$$
\sum\limits_{i=1}^{2(k+1)}\mu(2A_i)\leqslant\mu(M),
$$
and hence, there exists at least $(k+1)$ sets $A_i$ such that
\begin{equation}
\label{mt:1}
\mu(2A_i)\leqslant\mu(M)/(k+1)\leqslant\mu(M)/k.
\end{equation}
After reordering, we may assume that the above relation holds for $i=1,\ldots,k+1$. For such an $i$ we denote by $u_i$ the test-function constructed in the following way: it vanishes on the complement of the extrerior annulus $2A_i$, equals one on the interior annulus $A_i=B(a_i,R_i)\backslash B(a_i,r_i)$, and is given by the formula
$$
u_i(x)=\left\{
\begin{array}{lc}
\displaystyle{\frac{2}{r_i}}\dist(x,a_i)-1, & \text{if~ } x\in B(a_i,r_i)\backslash B(a_i,r_i/2),\\
2-\displaystyle{\frac{1}{R_i}}\dist(x,a_i), & \text{if~ } x\in B(a_i,2R_i)\backslash B(a_i,R_i),
\end{array}
\right.
$$
on the complement $2A_i\backslash A_i$. It is straightforward to see that each $u_i$ is a Lipschitz function, and moreover, on the complement $2A_i\backslash A_i$ its gradient satisfies the inequalities
\begin{equation}
\label{mt:2}
\abs{\nabla u_i}\leqslant 2/r_i\qquad\text{on}\quad B(a_i,r_i)\backslash B(a_i,r_i/2),
\end{equation}
\begin{equation}
\label{mt:3}
\abs{\nabla u_i}\leqslant 1/{R_i}\qquad\text{on}\quad B(a_i,2R_i)\backslash B(a_i,R_i).
\end{equation}
Now we estimate the Dirichlet energy of $u_i$ with respect to the metric $\tilde g$. By the H\"older inequality, we obtain
\begin{multline*}
\int_M\abs{\nabla u_i}^2_{\tilde g}d\mathit{Vol}_{\tilde g}\leqslant\mathit{Vol}_{\tilde g}(2A_i)^{1-2/m}
\left(\int_{B(a_i,2R_i)}\abs{\nabla u_i}^m_{\tilde g}d\mathit{Vol}_{\tilde g}\right)^{2/m}\\
=\mathit{Vol}_{\tilde g}(2A_i)^{1-2/m}\left(\int_{B(a_i,2R_i)}\abs{\nabla u_i}^m_{g}d\mathit{Vol}_{g}\right)^{2/m}\\ \leqslant\mathit{Vol}_{\tilde g}(2A_i)^{1-2/m}\left((2/r_i)^m\mathit{Vol}_g(B(a_i,r_i))+(1/R_i)^m\mathit{Vol}_g(B(a_i,2R_i))\right)^{2/m},
\end{multline*}
where in the equality above we used the conformal invariance of $\int\abs{\nabla u}^md\mathit{Vol}$, and in the last relation inequalities~\eqref{mt:2}\! --\! \eqref{mt:3}. Now, since by Proposition~\ref{prop:2} the outer radii satisfy the inequality $2R_i\leqslant 1<\rad(g)$, the volume bounds in Corollary~\ref{gb:c} apply, and we obtain
\begin{multline}
\label{mt:4}
\int_M\abs{\nabla u_i}^2_{\tilde g}d\mathit{Vol}_{\tilde g}\leqslant 16\mathit{Vol}_{\tilde g}(2A_i)^{1-2/m}\left(\mathit{Vol}_g(M)/\rad(g)^m\right)^{2/m}\\ 
\leqslant 16(\mathit{Vol}_{\tilde g}(M)/k)^{1-2/m}\left(\mathit{Vol}_g(M)/\rad(g)^m\right)^{2/m},
\end{multline}
where in the last inequality we used relation~\eqref{mt:1}. Combining inequalities~\eqref{mt:5} and~\eqref{mt:4}, we can now estimate the Rayleigh quotient:
\begin{multline*}
\mathcal R_{\tilde g}(u_i)=\left(\int_M\abs{\nabla u_i}_{\tilde g}^2d\mathit{Vol}_{\tilde g}\right)/\left(\int_Mu_i^2d\mathit{Vol}_{\tilde g}\right)\\ \leqslant C_{12}(m)(\mathit{Vol}_{\tilde g}(M)/k)^{-2/m}\left(\mathit{Vol}_g(M)/\rad(g)^m\right)^{1+2/m}\\
= C_{12}(m)(\mathit{Vol}_{\tilde g}(M))^{-2/m}\left(\mathit{Vol}_g(M)/\rad(g)^m\right)^{1+2/m}k^{2/m},
\end{multline*}
where $i=1,\ldots,k+1$. Since the $u_i$'s form a system of $W^{1,2}$-orthogonal functions, by the variational principle we conclude that
$$
\lambda_k(\tilde g)\mathit{Vol}_{\tilde g}(M)^{2/m}\leqslant C_{12}(m)\left(\mathit{Vol}_g(M)/\rad(g)^m\right)^{1+2/m}k^{2/m}.
$$
Thus, the statement of the theorem is proved in this case.

\smallskip
\noindent
{\em Case~(ii).} Since the $r_0$-neighbourhoods of the $A_i$'s are pair-wise disjoint, as in the first case, we may assume that
\begin{equation}
\label{mt:6}
\mu(A_i^{r_0})\leqslant\mu(M)/k\qquad\text{for any}\quad i=1,\ldots,k+1.
\end{equation}
For such an $i$ we denote by $u_i$ the test-function supported in the $r_0$-neighbourhood $A_i^{r_0}$ that is given by the formula
$$
u_i(x)=\left\{
\begin{array}{ll}
1, & \text{if~ }x\in A_i,\\
1-r_0^{-1}\dist(x,A_i), & \text{if~ }x\in A_i^{r_0}\backslash A_i,
\end{array}
\right.
$$
where $\dist(\cdot, A)$ stands for the distance to a subset $A$.
It is straightforward to see that $u_i$ is a Lipschitz function such that $\abs{\nabla u_i}\leqslant r_0^{-1}$ on $A_i^{r_0}\backslash A_i$. Thus, following the line of argument above, we obtain
\begin{multline*}
\int_M\abs{\nabla u_i}^2_{\tilde g}d\mathit{Vol}_{\tilde g}\leqslant\mathit{Vol}_{\tilde g}(A_i^{r_0})^{1-2/m}
\left(\int_{A_i^{r_0}}\abs{\nabla u_i}^m_{\tilde g}d\mathit{Vol}_{\tilde g}\right)^{2/m}\\
=\mathit{Vol}_{\tilde g}(A_i^{r_0})^{1-2/m}\left(\int_{A_i^{r_0}}\abs{\nabla u_i}^m_{g}d\mathit{Vol}_{g}\right)^{2/m}\leqslant (\mathit{Vol}_{\tilde g}(M)/k)^{1-2/m}\mathit{Vol}_{g}(M)^{2/m}r_0^{-2},
\end{multline*}
where in the last inequality we used relation~\eqref{mt:6}. Recall that by our normalisation assumption, we have
$$
r_0=\frac{1}{1600}=\frac{1}{4800}\rad(g).
$$
Hence, the bound above for the Dirichlet energy of $u_i$ can be re-written in the form
\begin{equation}
\label{mt:7}
\int_M\abs{\nabla u_i}^2_{\tilde g}d\mathit{Vol}_{\tilde g}\leqslant 4800^2(\mathit{Vol}_{\tilde g}(M)/k)^{1-2/m}\left(\mathit{Vol}_{g}(M)/\rad(g)^m\right)^{2/m},
\end{equation}
where $i=1,\ldots,k+1$. Now combining inequalities~\eqref{mt:5} and~\eqref{mt:7}, we arrive at the estimate $$
\mathcal R_{\tilde g}(u_i)\leqslant C_{13}(m)(\mathit{Vol}_{\tilde g}(M))^{-2/m}\left(\mathit{Vol}_g(M)/\rad(g)^m\right)^{1+2/m}k^{2/m},
$$
for all $i=1,\ldots,k+1$. Thus, by the variational principle we conclude that the desired inequalities for the eigenvalues of $\lambda_k(\tilde g)$ hold in this case as well.
\qed

\begin{remark}
Note that choosing the sets $A_i$ in the argument in Case~(ii) more carefully, such that in addition to relation~\eqref{mt:6} the following inequalities hold
$$
\mathit{Vol}_g(A_i^{r_0})\leqslant\mathit{Vol}_g(M)/k\qquad\text{for any}\quad i=1,\ldots,k+1,
$$
one can show that the eigenvalue $\lambda_k(\tilde g)$ is bounded independently of $k$ in this case. However, this observation does not give any improvement to the final result.
\end{remark}

\subsection{Proof of Theorem~\ref{mtm}}
The proof of Theorem~\ref{mtm} follows the strategy used in the proof of Theorem~\ref{mt}. However, the way we use the decomposition theorem, Proposition~\ref{prop:2}, as well as a few ingredients involved, are different. 

Let $(\Sigma^n,g)$ be a manifold isometrically immersed to $M$, via $\imath:\Sigma^n\to M$, as a proper minimal submanifold. Below we denote by $g$ the metric on both manifolds $\Sigma^n$ and $M$. We equip $\Sigma^n$ with a pseudo-metric $\bar d(\cdot,\cdot)$ obtained by restricting the distance function $\dist_g(\cdot,\cdot)$ on $M$ to the image $\imath(\Sigma^n)$. A metric ball $\bar B(\bar p,r)$ in this pseudo-metric can be viewed as the pre-image $\imath^{-1}(B(p,r))$, where $\imath(\bar p)=p$ and $B(p,r)$ is a metric ball in $(M,\dist_g)$. Abusing the notation, it is also denoted by $B(p,r)\cap\Sigma^n$ in Section~\ref{prems}. A measure $\bar\mu$ on $\Sigma^n$ is non-atomic with respect to $\bar d(\cdot,\cdot)$, see Section~\ref{revisit}, if and only if the push-forward measure $\imath_*\bar\mu$ is non-atomic on $M$. Since $\imath:\Sigma^n\to M$ is an immersion, it is straightforward to see that for any metric $h$ on $\Sigma^n$ its volume measure is non-atomic with respect to the pseudo-metric $\bar d(\cdot,\cdot)$. 

As in the proof of Theorem~\ref{mt}, we assume that the metric $g$ on $M$ is scaled such that $\rad(g)$ equals three. Then the combination of Lemma~\ref{l:hom} and Corollary~\ref{gbm:c} implies that the function 
\begin{equation}
\label{mtm:0}
\bar N(\rho)=C_{14}(n)\frac{\mathit{Vol}_g(\Sigma^n)}{\rad(g)^n}\rho^n,
\end{equation}
where $C_{14}(n)=24^n/(n\omega_n)$, is a small cover refinement function for the pseudo-metric space $(\Sigma^n,\bar d)$. Now let $h$ be a metric on $\Sigma^n$ that is conformal to $g$, and $\bar\mu$ its volume measure. By the discussion above, Proposition~\ref{prop:2} applies to the pseudo-metric space $(\Sigma^n,\bar d)$ equipped with $\bar\mu$. Thus, for any positive integer $k$ there exists a collection of $2(k+1)$ bounded Borel sets $\{\bar A_i\}$ in $\Sigma^n$ such that
\begin{equation}
\label{mtm:1/2}
\bar\mu(\bar A_i)\geqslant\bar\mu(\Sigma^n)/(2c(k+1))\geqslant\bar\mu(\Sigma^n)/(4\bar ck),
\end{equation}
for all $i=1,\ldots,2(k+1)$, where $\bar c=64\bar N(1600)$, and one of the following possibilities hold:
\begin{itemize}
\item[(i)] either all the $\bar A_i$'s are annuli for the pseudo-metric $\bar d(\cdot,\cdot)$, and the annuli $2\bar A_i$'s are pair-wise disjoint and their outer radii are not greater than one,
\item[(ii)] or the $r_0$-neighbourhoods of the $\bar A_i$'s, where $r_0=1600^{-1}$, are pair-wise disjoint.
\end{itemize}
Now the cases~(i) and~(ii) can be considered following the line of argument in the proof of Theorem~\ref{mt}. The test-functions are constructed similarly, but using the pseudo-metric $\bar d(\cdot,\cdot)$. A new ingredient in the estimate of their Dirichlet energies is one of the inequalities in Corollary~\ref{gbm:c}. Below we briefly sketch the key points of the argument. In the sequel we use estimate~\eqref{mtm:1/2} for $\bar\mu(\bar A_i)$ in the following form
\begin{equation}
\label{mtm:5}
\mathit{Vol}_{h}(\bar A_i)\geqslant\frac{\mathit{Vol}_{h}(\Sigma^n)}{k}\left(256 (1600)^n C_{14}(n)\mathit{Vol}_g(\Sigma^n)/\rad(g)^n\right)^{-1}.
\end{equation}
It follows by combination of the relation $\bar c=64\bar N(1600)$ with formula~\eqref{mtm:0} for a small cover refinement function.

\smallskip
\noindent
{\em Case~(i).} As in the proof of Theorem~\ref{mt}, we may assume that
\begin{equation}
\label{mtm:1}
\bar\mu(2\bar A_i)\leqslant\bar\mu(\Sigma^n)/(k+1)\leqslant\bar\mu(\Sigma^n)/k
\end{equation}
for $i=1,\ldots,k+1$. For each such $i$ the test-function $\bar u_i$ is set to equal one on the interior annulus $\bar A_i=\bar B(\bar a_i,R_i)\backslash \bar B(\bar a_i,r_i)$ and zero on the complement of the exterior annulus $2\bar A_i$. On the complement $2\bar A_i\backslash \bar A_i$, it is given by the formula
\begin{equation}
\label{barui}
\bar u_i(x)=\left\{
\begin{array}{lc}
\displaystyle{\frac{2}{r_i}}\bar d(x,\bar a_i)-1, & \text{if~ } x\in \bar B(\bar a_i,r_i)\backslash \bar B(\bar a_i,r_i/2),\\
2-\displaystyle{\frac{1}{R_i}}\bar d(x,\bar a_i), & \text{if~ } x\in \bar B(\bar a_i,2R_i)\backslash \bar B(\bar a_i,R_i).
\end{array}
\right.
\end{equation}
It is straightforward to see that $\abs{\nabla\bar d (x,\cdot)}\leqslant 1$ for any point $x\in\Sigma^n$, and hence, the gradient of $\bar u_i$ satisfies the inequalities
$$
\abs{\nabla \bar u_i}\leqslant 2/r_i\qquad\text{on}\quad \bar B(\bar a_i,r_i)\backslash \bar B(\bar a_i,r_i/2),
$$
$$
\abs{\nabla \bar u_i}\leqslant 1/{R_i}\qquad\text{on}\quad \bar B(\bar a_i,2R_i)\backslash \bar B(\bar a_i,R_i).
$$
Arguing as in the proof of Theorem~\ref{mt}, we can now estimate the Dirichlet energy of $\bar u_i$. In more detail, we obtain
\begin{multline*}
\int_{\Sigma^n}\abs{\nabla \bar u_i}^2_{h}d\mathit{Vol}_{h} \leqslant\mathit{Vol}_{h}(2\bar A_i)^{1-2/n}\left((2/r_i)^n\mathit{Vol}_g(\bar B(\bar a_i,r_i))+(1/R_i)^n\mathit{Vol}_g(\bar B(\bar a_i,2R_i))\right)^{2/n}\\
\leqslant 16\mathit{Vol}_{h}(2\bar A_i)^{1-2/n}\left(\mathit{Vol}_g(\Sigma^n)/\rad(g)^n\right)^{2/n}\\ 
\leqslant 16(\mathit{Vol}_{h}(\Sigma^n)/k)^{1-2/n}\left(\mathit{Vol}_g(\Sigma^n)/\rad(g)^n\right)^{2/n},
\end{multline*}
where we used Corollary~\ref{gbm:c} to estimate volumes of extrinsic balls in the second inequality, and relation~\eqref{mtm:1} in the third. Combining the last inequality with relation~\eqref{mtm:5}, we obtain the following estimate for the Rayleigh quotient of $\bar u_i$:
\begin{multline*}
\mathcal R_{h}(\bar u_i)=\left(\int_{\Sigma^n}\abs{\nabla \bar u_i}_h^2d\mathit{Vol}_{h}\right)/\left(\int_{\Sigma^n}\bar u_i^2d\mathit{Vol}_{h}\right)\\ \leqslant C_{15}(n)(\mathit{Vol}_{h}(\Sigma^n))^{-2/n}\left(\mathit{Vol}_g(\Sigma^n)/\rad(g)^n\right)^{1+2/n}k^{2/n}
\end{multline*}
for any $i=1,\ldots,k+1$. By the variational principle, these estimates immediately yield the desired inequality for the Laplace eigenvalue $\lambda_k(\Sigma^n,h)$.

\smallskip
\noindent
{\em Case~(ii).} As in the proof of Theorem~\ref{mt}, we may assume that
\begin{equation}
\label{mtm:6}
\mu(\bar A_i^{r_0})\leqslant\mu(\Sigma^n)/k\qquad\text{for any}\quad i=1,\ldots,k+1.
\end{equation}
The test-function $\bar u_i$, supported in the $r_0$-neighbourhood $\bar A_i^{r_0}$, is defined by the formula
$$
\bar u_i(x)=\left\{
\begin{array}{ll}
1, & \text{if~ }x\in \bar A_i,\\
1-r_0^{-1}\overline{\dist}(x,\bar A_i), & \text{if~ }x\in \bar A_i^{r_0}\backslash \bar A_i,
\end{array}
\right.
$$
where $\overline{\dist}(\cdot,\bar A)$ is the distance to a subset in the sense of pseudo-metric $\bar d(\cdot,\cdot)$. As above, we see that $\abs{\nabla \bar u_i}\leqslant r_0^{-1}$ on the complement $\bar A_i^{r_0}\backslash \bar A_i$, and estimate its Dirichlet energy in the following way:
\begin{multline*}
\int_{\Sigma^n}\abs{\nabla \bar u_i}^2_{h}d\mathit{Vol}_{h}\leqslant\mathit{Vol}_{h}(\bar A_i^{r_0})^{1-2/n}
\left(\int_{\bar A_i^{r_0}}\abs{\nabla \bar u_i}^n_{g}d\mathit{Vol}_{g}\right)^{2/n}\\
\leqslant (\mathit{Vol}_{h}(\Sigma^n)/k)^{1-2/n}\mathit{Vol}_{g}(\Sigma^n)^{2/n}r_0^{-2}\\
=4800^2(\mathit{Vol}_{h}(\Sigma^n)/k)^{1-2/n}\left(\mathit{Vol}_{g}(\Sigma^n)/\rad(g)^n\right)^{2/n},
\end{multline*}
where we used relation~\eqref{mtm:6} in the second inequality, and the scaling assumption $\rad(g)=3$ together with $r_0=1600^{-1}$ in the last relation. Combining this estimate with relation~\eqref{mtm:5}, we obtain 
$$
\mathcal R_{h}(\bar u_i)\leqslant C_{16}(n)(\mathit{Vol}_{h}(\Sigma^n))^{-2/n}\left(\mathit{Vol}_g(\Sigma^n)/\rad(g)^n\right)^{1+2/n}k^{2/n}
$$
for any $i=1,\ldots,k+1$. Now the desired inequality for the Laplace eigenvalue $\lambda_k(\Sigma^n,h)$ follows from the variational principle.
\qed

\subsection{Proof of Theorem~\ref{mtm:extra}}
As in the proof of Theorem~\ref{mtm}, we consider a pseudo-metric space $(\Sigma^n,\bar d)$, where a pseudo-metric $\bar d(\cdot,\cdot)$ is obtained by restricting the distance function $\dist_g(\cdot,\cdot)$ to the image of an immersed submanifold $\Sigma^n$. For a point $\bar p\in\Sigma^n$ the volume of a pseudo-metric ball $\bar B(\bar p,r)$ satisfies the inequalities
\begin{equation}
\label{bnds:dns}
\omega_nr^n\leqslant\mathit{Vol}_g(\bar B(\bar p,r))\leqslant\omega_n\theta(\Sigma^n)r^n
\end{equation}
for any $r>0$, where $\omega_n$ is the volume of a unit ball in the Euclidean space $\mathbb R^n$, and $\theta(\Sigma^n)$ is the density at infinity. These inequalities are direct consequences of the volume monotonicity for minimal submanifolds, see Proposition~\ref{gbm}. By Lemma~\ref{l:hom} inequalities~\eqref{bnds:dns} imply that the function $\bar N(\rho)=\theta(\Sigma^n)(6\rho)^n$ is a cover refinement function for this pseudo-metric space. 

Let $h$ be a metric conformal to $g$ on a domain $\Omega\subset\Sigma^n$, and $\bar\mu$ its volume measure restricted to $\Omega$. As in the proof of Theorem~\ref{mtm}, we conclude that the measure $\bar\mu$ is non-atomic with respect to $\bar d(\cdot,\cdot)$, and  Proposition~\ref{prop:1} applies. Thus, for any positive integer $k$ there exists a collection of $2(k+1)$ annuli $\{\bar A_i\}$ in $\Sigma^n$ such that the annuli $\{2\bar A_i\}$ are pair-wise disjoint and
$$
\bar\mu(\bar A_i)\geqslant\bar\mu(\Sigma^n)/(2c(k+1))\geqslant\bar\mu(\Sigma^n)/(4\bar ck)
$$
for all $i=1,\ldots, 2(k+1)$, where 
$$
\bar c=8\bar N(1600)=C_{17}(n)\theta(\Sigma^n).
$$
Let $\bar u_i$ be a test-function constructed as in Case~(i) of the proof of Theorem~\ref{mtm}; it is supported in the annulus $2\bar A_i$. Then, using inequalities~\eqref{bnds:dns} in place of Corollary~\ref{gbm:c}, one can repeat the argument in the proof of Theorem~\ref{mtm} to show that 
$$
\mathcal R_{h}(\bar u_i)=\left(\int_{\Omega}\abs{\nabla \bar u_i}_h^2d\mathit{Vol}_{h}\right)/\left(\int_{\Omega}\bar u_i^2d\mathit{Vol}_{h}\right)\leqslant C_{18}(n)(\mathit{Vol}_{h}(\Omega))^{-2/n}\theta(\Sigma^n)^{1+2/n}k^{2/n}
$$
for some $k+1$ test-functions. Since these test-functions are supported in pair-wise disjoint sets, by the variational principle we obtain the corresponding inequalities for the Neumann eigenvalues $\lambda_k(\Omega,h)$.
\qed

\subsection{Proof of Theorem~\ref{tma1}}
The proof of the theorem uses ingredients from the proofs of both Theorems~\ref{mt} and~\ref{mtm}. The idea is to apply Proposition~\ref{prop:2} to the metric space $(M,\dist_g)$ equipped with the push-forward measure $\mu_*=\imath_*\mathit{Vol}_h$, where $\imath:\Sigma^n\to M$ is an immersion. The test-functions on $\Sigma^n$ are obtained by pulling back the test-functions $u_i$ that are used in the proof of Theorem~\ref{mt}, and their Dirichlet energies are estimated following the line of argument in the proof of Theorem~\ref{mtm}.

In more detail, let $h$ be a metric on $\Sigma^n$ conformal to $g$, and $\mu_*$ the push-forward volume measure $\imath_*\mathit{Vol}_h$. It is straightforward to see that $\mu $ is non-atomic. Scaling the metric $g$ on $M$, we may assume that $\rad(g)$ equals three. Applying Proposition~\ref{prop:2} to the metric space $(M,\dist_g)$, for any positive integer $k$ we obtain a collection of $2(k+1)$ bounded Borel sets $\{A_i\}$  in $M$ such that
\begin{equation}
\label{mta1:1/2}
\mu_*(A_i)\geqslant\mu_*(M)/(4ck)\qquad\text{for all }\quad i=1,\ldots, 2(k+1),
\end{equation}
where $c=64N(1600)$, the function $N(\rho)$ is given by formula~\eqref{mt:0}, and one of the following possibilities hold:
\begin{itemize}
\item[(i)] either all the $A_i$'s are annuli, and the annuli $2A_i$'s are pair-wise disjoint and their outer radii are not greater than one,
\item[(ii)] or the $r_0$-neighbourhoods of the $A_i$'s, where $r_0=1600^{-1}$, are pair-wise disjoint.
\end{itemize}
In the sequel we also use the notation $\bar A_i$ for the Borel set $\imath^{-1}(A_i)$ in $\Sigma^n$. Then, relation~\eqref{mta1:1/2} can be re-written in the form
\begin{equation}
\label{mta1:5}
\mathit{Vol}_h(\bar A_i)\geqslant\frac{\mathit{Vol}_h(\Sigma^n)}{k}(256(1600)^m C_{11}(m)\mathit{Vol}_g(M)/\rad(g)^m)^{-1}
\end{equation}
for all $i=1,\ldots, 2(k+1)$. Now we briefly describe the arguments for the cases~(i) and~(ii), corresponding to the different properties of the sets $A_i$.

\noindent
{\em Case~(i).} As in the proof of Theorem~\ref{mt}, without loss of generality we may assume that
$$
\mu_*(2A_i)\leqslant\mu_*(M)/(k+1)\leqslant\mu_*(M)/k
$$
for all $i=1,\ldots, k+1$. Let $u_i$ be a test-function constructed in Case~(i) in the proof of Theorem~\ref{mt}. By $\bar u_i$ we denote the test-function supported in $2\bar A_i=\imath^{-1}(2A_i)$, given by $\bar u_i=u_i\circ\imath$. Note that the sets $\bar A_i=\imath^{-1}(A_i)$ and $2\bar A_i=\imath^{-1}(2A_i)$ are annuli in the pseudo-metric space $(\Sigma^n,\bar d)$, and using the notation in the proof of Theorem~\ref{mtm}, our test-functions $\bar u_i$ can be also described by formula~\eqref{barui}. In particular, we may repeat the argument in the proof of Theorem~\ref{mtm} to obtain the estimate
$$
\int_{\Sigma^n}\abs{\nabla \bar u_i}^2_{h}d\mathit{Vol}_{h} \leqslant 16(\mathit{Vol}_{h}(\Sigma^n)/k)^{1-2/n}\left(\mathit{Vol}_g(\Sigma^n)/\rad(g)^n\right)^{2/n}
$$
for any $i=1,\ldots,k+1$. Combining the latter with relation~\eqref{mta1:5}, we arrive at the following estimate for the Rayleigh quotient
\begin{multline*}
\mathcal R_{h}(\bar u_i)=\left(\int_{\Sigma^n}\abs{\nabla\bar u_i}^2d\mathit{Vol}_{h}\right)/\left(\int_{\Sigma^n}\bar u_i^2d\mathit{Vol}_{h}\right)\\ \leqslant C_{12}(m)(\mathit{Vol}_{h}(\Sigma^n)/k)^{-2/n}\left(\mathit{Vol}_g(M)/\rad(g)^m\right)\left(\mathit{Vol}_g(\Sigma^n)/\rad(g)^n\right)^{2/n}\\
= C_{12}(m)(\mathit{Vol}_{h}(\Sigma^n))^{-2/n}\left(\mathit{Vol}_g(M)/\rad(g)^{m+2}\right)\mathit{Vol}_g(\Sigma^n)^{2/n}k^{2/n}
\end{multline*}
for any $i=1,\dots,k+1$. Since the $\bar u_i$'s are supported in the pair-wise disjoint sets $2\bar A_i=\imath^{-1}(2A_i)$ in $\Sigma^n$, they form a $W^{1,2}$-orthogonal system, and the inequalities for $\lambda_k(\Sigma^n,h)$ now follow from the variational principle.

\noindent
{\em Case~(ii).} As in the proof of Theorem~\ref{mt}, we may assume that
$$
\mu_*(A_i^{r_0})\leqslant\mu_*(M)/k\qquad\text{for any}\quad i=1,\ldots,k+1.
$$
Let $u_i$ be a test-function constructed in Case~(ii) in the proof of Theorem~\ref{mt}. By $\bar u_i$ we denote the test-function supported in $\bar A_i^{r_0}=\imath^{-1}(A_i^{r_0})$, given by $\bar u_i=u_i\circ\imath$. As above, we see that 
$$
\abs{\nabla\bar u_i}\leqslant\abs{\nabla (u_i\circ\imath)}\leqslant r_0^{-1}\qquad\text{on}\quad \imath^{-1}(A_i^{r_0}\backslash A_i),
$$
and arguing as in the proof of Theorem~\ref{mtm}, we obtain
\begin{multline*}
\int_{\Sigma^n}\abs{\nabla \bar u_i}^2_{h}d\mathit{Vol}_{h}\leqslant (\mathit{Vol}_{h}(\Sigma^n)/k)^{1-2/n}\mathit{Vol}_{g}(\Sigma^n)^{2/n}r_0^{-2}\\ =4800^2(\mathit{Vol}_{h}(\Sigma^n)/k)^{1-2/n}\left(\mathit{Vol}_{g}(\Sigma^n)/\rad(g)^n\right)^{2/n}.
\end{multline*}
Combining the latter with relation~\eqref{mta1:5}, we arrive at the following estimate
\begin{multline*}
\mathcal R_{h}(\bar u_i)\leqslant C_{13}(m)(\mathit{Vol}_{h}(\Sigma^n)/k)^{-2/n}\left(\mathit{Vol}_g(M)/\rad(g)^m\right)\left(\mathit{Vol}_{g}(\Sigma^n)/\rad(g)^n\right)^{2/n}\\
=C_{13}(m)(\mathit{Vol}_{h}(\Sigma^n))^{-2/n}\left(\mathit{Vol}_g(M)/\rad(g)^{m+2}\right)\mathit{Vol}_g(\Sigma^n)^{2/n}k^{2/n}
\end{multline*}
for any $i=1,\ldots,k+1$, and the inequalities for $\lambda_k(\Sigma^n,h)$ now follow from the variational principle.
\qed

\subsection{Proof of Theorem~\ref{tma2}}
As in the proof of Theorem~\ref{tma1}, the strategy is to apply Proposition~\ref{prop:2} to the metric space $(M,\dist_g)$ equipped with the push-forward measure $\mu_*=\imath_*\mathit{Vol}_h$, where $\imath:\Sigma^n\to M$ is an immersion. However, using the lower Ricci curvature bound, we can construct a different, from the one used before, small cover refinement function on $(M,\dist_g)$.

In more detail, a standard application of the Bishop-Gromov relative volume comparison theorem for spaces with a lower Ricci curvature bound, see~\cite{Cha}, yields the inequality
\begin{equation}
\label{bgro}
\frac{\mathit{Vol}_g(B(p,R))}{\mathit{Vol}_g(B(p,r))}\leqslant\left(\frac{R}{r}\right)^me^{(m-1)\sqrt{\kappa}R}
\end{equation}
for any $0<r\leqslant R$, where $B(p,t)$ stands for a metric ball of radius $t>0$ in the space $(M,\dist_g)$. Scaling the metric $g$ on $M$, we may assume that
\begin{equation}
\label{scale}
\min\left\{\frac{1}{\sqrt{\kappa}},\rad(g)\right\}=3.
\end{equation}
Using relation~\eqref{bgro}, we can repeat the argument in the proof of Lemma~\ref{l:hom} to conclude that the function
$$
N_0(\rho)=(6\rho)^me^{(m-1)}.
$$
is a small cover refinement function on $(M,\dist_g)$.

Now let $h$ be a metric on $\Sigma^n$ conformal to $g$, and $\mu_*$ be the push-forward measure $\imath_*\mathit{Vol}_h$. As in the proof of Theorem~\ref{tma1}, the measure $\mu_*$ is non-atomic and Proposition~\ref{prop:2} applies to the metric space $(M,\dist_g)$. Thus, for any positive integer $k$ we can find a collection of $3(k+1)$ bounded Borel sets $\{A_i\}$  in $M$ such that
\begin{equation}
\label{mta2:1/2}
\mu_*(A_i)\geqslant\mu_*(M)/(3c(k+1))\geqslant\mu_*(M)/(6ck)
\end{equation}
for all $i=1,\ldots, 3(k+1)$, where $c=64N_0(1600)$, and one of the following possibilities occur:
\begin{itemize}
\item[(i)] either all the $A_i$'s are annuli, and the annuli $2A_i$'s are pair-wise disjoint and their outer radii are not greater than one,
\item[(ii)] or the $r_0$-neighbourhoods of the $A_i$'s, where $r_0=1600^{-1}$, are pair-wise disjoint.
\end{itemize}
Using the notation $\bar A_i$ for the Borel set $\imath^{-1}(A_i)$ in $\Sigma^n$, relation~\eqref{mta2:1/2} can be re-written in the form
\begin{equation}
\label{mta2:5}
\mathit{Vol}_h(\bar A_i)\geqslant\frac{\mathit{Vol}_h(\Sigma^n)}{k}C_{19}(m)
\end{equation}
for all $i=1,\ldots, 3(k+1)$. Now we consider the cases~(i) and~(ii).

\noindent
{\em Case~(i).} As in the proof of Theorem~\ref{mt}, without loss of generality we may assume that
$$
\mu_*(2A_i)\leqslant\mu_*(M)/(k+1)\leqslant\mu_*(M)/k
$$
for all $i=1,\ldots, k+1$. Let $\bar u_i$ be a test-function supported in $\imath^{-1}(2A_i)$ from the proof of Theorem~\ref{tma1}, see Case~(i). As was shown there, the Dirichlet energy of $\bar u_i$ satisfies the inequality
$$
\int_{\Sigma^n}\abs{\nabla \bar u_i}^2_{h}d\mathit{Vol}_{h} \leqslant 16(\mathit{Vol}_{h}(\Sigma^n)/k)^{1-2/n}\left(\mathit{Vol}_g(\Sigma^n)/\rad(g)^n\right)^{2/n}
$$
for any $i=1,\ldots,k+1$; the argument uses the inequality $\rad(g)\geqslant 3$, see relation~\eqref{scale}. 
Combining this estimate with relation~\eqref{mta2:5}, we obtain
\begin{multline*}
\mathcal R_{h}(\bar u_i)=\left(\int_{\Sigma^n}\abs{\nabla\bar u_i}_h^2d\mathit{Vol}_{h}\right)/\left(\int_{\Sigma^n}\bar u_i^2d\mathit{Vol}_{h}\right)\\ \leqslant C_{20}(m)(\mathit{Vol}_{h}(\Sigma^n))^{-2/n}\left(\mathit{Vol}_g(\Sigma^n)/\rad(g)^{n}\right)^{2/n}k^{2/n}
\end{multline*}
for any $i=1,\dots,k+1$. Now by the variational principle we conclude that
$$
\lambda_k(\Sigma^n,h)\mathit{Vol}_h(\Sigma^n)^{2/n}\leqslant C_{20}(m)\rad(g)^{-2}\mathit{Vol}_g(\Sigma^n)^{2/n}k^{2/n}.
$$

\noindent
{\em Case~(ii).} Denote by $\nu$ the push-forward measure $\imath_*\mathit{Vol}_{g}$ on $M$. Since all sets $A_i$ are pair-wise disjoint, we can choose $(k+1)$ sets such that
\begin{equation}
\label{2choices}
\mu_*(A_i^{r_0})\leqslant\mu_*(M)/k\qquad\text{and}\qquad\nu(A_i^{r_0})\leqslant\nu(M)/k.
\end{equation}
Indeed, there exists at least $2(k+1)$ sets such that the first inequalities occur. Among theses sets we can choose further $(k+1)$ sets such that the second inequalities for the measure $\nu$ hold. Without loss of generality, we may assume that both inequalities in~\eqref{2choices} hold for $i=1,\ldots,k+1$. Let $\bar u_i$ be a test-function supported in $\bar A_i^{r_0}=\imath^{-1}(A_i^{r_0})$ from the proof of Theorem~\ref{tma1}, see Case~(ii). Recall that its gradient satisfies the relation $\abs{\nabla\bar u_i}\leqslant r_0^{-1}$ on $\imath^{-1}(A_i^{r_0}\backslash A_i)$. Thus, we obtain
\begin{multline*}
\int_{\Sigma^n}\abs{\nabla \bar u_i}^2_{h}d\mathit{Vol}_{h}\leqslant\mathit{Vol}_{h}(\bar A_i^{r_0})^{1-2/n}
\left(\int_{\bar A_i^{r_0}}\abs{\nabla \bar u_i}^n_{g}d\mathit{Vol}_{g}\right)^{2/n}\\
\leqslant \mathit{Vol}_{h}(\bar A_i^{r_0})^{1-2/n}\mathit{Vol}_{g}(\bar A_i^{r_0})^{2/n}r_0^{-2}\leqslant (\mathit{Vol}_{h}(\Sigma^n)/k)^{1-2/n}(\mathit{Vol}_{g}(\Sigma^n)/k)^{2/n}r_0^{-2}\\
=\frac{4800^2}{k}\mathit{Vol}_{h}(\Sigma^n)^{1-2/n}\mathit{Vol}_{g}(\Sigma^n)^{2/n}\max\{\kappa,\rad(g)^{-2}\},
\end{multline*}
where we used relations~\eqref{2choices} in the third inequality, and the scaling assumption~\eqref{scale}
in the last equality. Combining this estimate with relation~\eqref{mta2:5}, we obtain
$$
\mathcal R_{h}(\bar u_i) \leqslant C_{21}(m)\mathit{Vol}_{h}(\Sigma^n)^{-2/n}\mathit{Vol}_g(\Sigma^n)^{2/n}\max\{\kappa,\rad(g)^{-2}\}
$$
for any $i=1,\dots,k+1$. Applying the variational principle, we get the inequalities 
$$
\lambda_k(\Sigma^n,h)\mathit{Vol}_h(\Sigma^n)^{2/n}\leqslant C_{21}(m)\mathit{Vol}_g(\Sigma^n)^{2/n}\max\{\kappa,\rad(g)^{-2}\}.
$$
Comparing the latter with the eigenvalue inequalities in Case~(i) above, we conclude that in both cases the Laplace eigenvalues $\lambda_k(\Sigma^n,h)$ satisfy
$$
\lambda_k(\Sigma^n,h)\mathit{Vol}_h(\Sigma^n)^{2/n}\leqslant C_{8}(m)\max\{\kappa,\rad(g)^{-2}k^{2/n}\}\mathit{Vol}_g(\Sigma^n)^{2/n}
$$
for any $k\geqslant 1$, where $C_8(m)$ equals $\max\{C_{20}(m),C_{21}(m)\}$.
\qed


\appendix
\section{Appendix: Croke's bounds for higher Laplace eigenvalues}
\label{app:a}

The purpose of this appendix is to give a proof of the following statement.
\begin{prop}
\label{croke:hi}
Let $(M,g)$ be a closed Riemannian manifold of dimension $m$. Then its Laplace eigenvalues $\lambda_k(g)$ satisfy the inequalities
$$
\lambda_k(g)\leqslant C_3(m)\frac{\mathit{Vol}_g(M)^2}{\conv(g)^{2m+2}}k^{2m}
$$
for any $k\geqslant 1$, where $\conv(g)$ is the convexity radius of $(M,g)$, and $C_3(m)$ is the constant that depends on the dimension $m$ only.
\end{prop}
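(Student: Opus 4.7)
The proof plan is to extend Croke's original 1980 argument for $\lambda_1$ to all eigenvalues by combining his base Rayleigh-quotient estimate for a geodesic ball with a standard disjoint-supports construction and the min-max principle.

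The starting point is Croke's Rayleigh-quotient estimate for a bump test function $\phi$ equal to $1$ on $B(p,r)$ and supported in $B(p,2r)$, where $r \leqslant \conv(g)/2$. Croke's universal isoperimetric inequality, applied to the geodesic spheres $S(p,s)$ with $s \leqslant \conv(g)$ and integrated via a Gronwall-type ODE for $s \mapsto \mathit{Vol}_g(B(p,s))$, yields the uniform volume lower bound $\mathit{Vol}_g(B(p,s)) \geqslant c_m s^m$. Combined with $\int_M \abs{\nabla\phi}^2 \leqslant r^{-2} \mathit{Vol}_g(B(p,2r))$ and $\int_M \phi^2 \geqslant c_m r^m$, and after subtracting the mean $\bar\phi = \mathit{Vol}_g(M)^{-1}\int_M \phi$ so as to make the trial function $\psi = \phi - \bar\phi$ orthogonal to the constant function, the Rayleigh quotient is bounded by $C(m)\,\mathit{Vol}_g(M)^2/r^{2m+2}$. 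It is the mean-subtraction step that introduces the second power of $\mathit{Vol}_g(M)$ into the numerator, matching the form of Croke's original $\lambda_1$ estimate.

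To obtain the bound for $\lambda_k$, I would select the radius $r_k = c(m)\,\conv(g)\, k^{-m/(m+1)}$ and, using the volume lower bound above to control the packing, find $k+1$ pairwise disjoint geodesic balls $B(p_i, r_k) \subset M$ (this packing is automatic once $k$ exceeds a dimensional constant, since then $(k+1)\,c_m r_k^m \leqslant \mathit{Vol}_g(M)$; for smaller $k$ one falls back on the direct $\lambda_1$ inequality). On each $B(p_i, r_k)$ construct the corresponding bump $\phi_i$, then orthogonalize the family $\{\phi_i\}$ against the constant function to produce a $(k+1)$-dimensional admissible family $\{\psi_i\}$ for the variational principle. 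Since the original $\phi_i$'s have pairwise disjoint supports, the matrix of their $L^2$-inner products is essentially diagonal, so after mean-subtraction the $\psi_i$'s still span a subspace on which the Rayleigh quotient is controlled by $\max_i\, \mathcal R_g(\psi_i)$. Applying the base estimate to each $\psi_i$ and invoking the min-max principle gives
$$
\lambda_k(g) \leqslant \max_i \mathcal R_g(\psi_i) \leqslant C(m)\,\mathit{Vol}_g(M)^2/r_k^{2m+2} = C_3(m)\,\mathit{Vol}_g(M)^2 k^{2m}/\conv(g)^{2m+2},
$$
where the exponent $m/(m+1)$ in $r_k$ is the unique one for which $r_k^{-(2m+2)}$ converts to the required factor $k^{2m}$.

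The principal obstacle is the orthogonalization step: the mean-subtraction introduces small cross-terms between test functions supported on different balls, so one needs to verify that these corrections do not degrade the estimate by more than a dimensional constant. The standard remedy is to work directly with the $(k+1)\times(k+1)$ Gram matrix of the $\phi_i$'s, which is close to $\diag(\int \phi_i^2)$ up to a rank-one perturbation coming from the mean, and to diagonalize explicitly; alternatively one can argue that the subspace $V = \operatorname{span}\{\phi_i\} \cap (\mathbb{R}\cdot 1)^\perp$ has dimension at least $k$, with an additional function supplied by the constant's complement, so that the resulting $(k+1)$-dimensional test space satisfies the claimed Rayleigh bound. The remaining work — justifying the packing argument for small $k$ and verifying that the dimensional constants collapse into a single $C_3(m)$ — is routine.
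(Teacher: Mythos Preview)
Your proposal has two genuine gaps. First, the packing step is backwards: the inequality $(k+1)\,c_m r_k^m \leqslant \mathit{Vol}_g(M)$ is a \emph{necessary} condition for the existence of $k+1$ disjoint balls of radius $r_k$, not a sufficient one. A lower bound on the volume of balls tells you at most how many disjoint balls can fit, never at least how many. Without an upper bound on $\mathit{Vol}_g(B(p,r))$ (which you do not have under the bare hypotheses), there is no volume-based packing argument available. The paper sidesteps this entirely: it picks a point $q$ in the cut locus of an arbitrary $p$, so that the minimizing geodesic $\gamma$ from $p$ to $q$ has length $L\geqslant\conv(g)$, and then places $2k+1$ balls of radius $r=L/(4k)$ at equally spaced points along $\gamma$. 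Disjointness is then automatic from the triangle inequality, with no volume input at all.

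Second, you misidentify the source of the factor $\mathit{Vol}_g(M)^2$. Your bump function $\phi$ with mean subtracted gives a Rayleigh quotient of order $\mathit{Vol}_g(M)/r^{m+2}$, not $\mathit{Vol}_g(M)^2/r^{2m+2}$: the numerator $\int\abs{\nabla\phi}^2\leqslant r^{-2}\mathit{Vol}_g(B(p,2r))\leqslant r^{-2}\mathit{Vol}_g(M)$ and the denominator $\int(\phi-\bar\phi)^2\gtrsim c_m r^m$ produce only a single power of the volume. In the paper the squared volume comes instead from Croke's Dirichlet estimate $\lambda_0(B(p,r))\leqslant \bar C_3(m)\,\mathit{Vol}_g(B(p,r))^2/r^{2m+2}$, combined with a pigeonhole step: among the $2k+1$ disjoint balls along $\gamma$, at least $k+1$ satisfy $\mathit{Vol}_g(B(p_i,r))\leqslant \mathit{Vol}_g(M)/k$, whence $\lambda_0(B(p_i,r))\leqslant \bar C_3(m)(\mathit{Vol}_g(M)/k)^2/r^{2m+2}$; inserting $r\geqslant\conv(g)/(4k)$ yields the factor $k^{2m}$ directly. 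The Dirichlet eigenfunctions extended by zero then serve as $k+1$ test functions with disjoint supports, so no orthogonalisation against constants is needed.
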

For $k=1$ the inequality in Proposition~\ref{croke:hi} is due to Croke~\cite[Corollary~19]{Cro80}. Its proof is based on the following upper bound for the first Dirichlet eigenvalue of a geodesic ball $B(p,r)$ in $M$:
\begin{equation}
\label{cro:d}
\lambda_0(B(p,r))\leqslant \bar C_3(m)\frac{\mathit{Vol}_g(B(p,r))^2}{r^{2m+2}},
\end{equation}
where $0<r\leqslant\conv(g)$, and $\bar C_3(m)$ is a constant that depends on $m$ only, see~\cite[Theorem~18]{Cro80}. Below we demonstrate how inequality~\eqref{cro:d} can be used to prove Proposition~\ref{croke:hi}.
\begin{proof}[Proof of Proposition~\ref{croke:hi}]
Pick an arbitrary point $p\in M$, and let $q$ be a point from the cut locus of $p$. Thus, we have 
$$
\dist_g(p,q)\geqslant\inj(g)\geqslant\conv(g).
$$
Denote by $L$ the distance $\dist_g(p,q)$, and let $\gamma:[0,L]\to M$ be a shortest unit speed geodesic joining $p$ and $q$. For a given positive integer $k$ consider geodesic balls $B(p_i,r)$, where $r=L/(4k)$, the $p_i$'s are the points $\gamma(iL/(2k))$ on the geodesic $\gamma$, and $i=0,\ldots, 2k$. It is straightforward to see that these balls are pair-wise disjoint, and hence,
$$
\sum_{i=0}^{2k}\mathit{Vol}_g(B(p_i,r))\leqslant\mathit{Vol}_g(M).
$$
Thus, there exists at least $(k+1)$ points $p_i$ such that
$$
\mathit{Vol}_g(B(p_i,r))\leqslant\mathit{Vol}_g(M)/(k+1)\leqslant \mathit{Vol}_g(M)/k.
$$
Combining the last inequality with Croke's inequality~\eqref{cro:d}, we obtain
$$
\lambda_0(B(p_i,r))\leqslant \bar C_3(m)\frac{(\mathit{Vol}_g(M)/k)^2}{r^{2m+2}}\leqslant 4^{2m+2}C_3(m)\frac{\mathit{Vol}_g(M)^2}{\conv(g)^{2m+2}}k^{2m},
$$
where in the last inequality we used the relation $r\geqslant\conv(g)/(4k)$. Now let $\varphi_i$ be a Dirichlet $\lambda_0$-eigenfunction on the ball $B(p_i,r)$ extended to $M$, by setting it to be equal to zero on the complement $M\backslash B(p_i,r)$. The above inequalities show that the Rayleigh quotients on $M$ of at least $(k+1)$ such functions $\varphi_i$ satisfy the inequality
$$
\mathcal R_g(\varphi_i)\leqslant C_3(m)\frac{\mathit{Vol}_g(M)^2}{\conv(g)^{2m+2}}k^{2m}, 
$$
where we set $C_3(m)=4^{2m+2}\bar C_3(m)$. Since the supports of these $\varphi_i$'s are disjoint, by the variational principle we conclude that the desired inequalities for the Laplace eigenvalues $\lambda_k(g)$ hold indeed.
\end{proof}

{\small

}

\end{document}